\newcounter{todocounter}
\begin{document}

\newcommand{\authorfootnotes}{\renewcommand\thefootnote{\@fnsymbol\c@footnote}}%

\newcommand{\Spaces}{\mathrm{Spaces}}
\newcommand{\Mod}{\mathrm{Mod_R}}
\newcommand{\Sets}{\mathrm{Sets}}		
\newcommand{\Rspan}{\mathcal{R}}
\newcommand{\K}{\mathcal{K}}	
\newcommand{\G}{\mathcal{G}}
\newcommand{\kernel}{\mathrm{ker}}
\newcommand{\tot}{\mathrm{tot}}	
\newcommand{\gen}{\mathrm{gen}}	
\newcommand{\h}{\mathrm{H}}	
				
\newcommand{\mmbox}[1]{\mbox{${#1}$}}
\newcommand{\affine}[1]{\mmbox{{\mathbb A}^{#1}}}
\newcommand{\Ann}[1]{\mmbox{{\rm Ann}({#1})}}
\newcommand{\caps}[3]{\mmbox{{#1}_{#2} \cap \ldots \cap {#1}_{#3}}}
\newcommand{\N}{{\mathbb N}}
\newcommand{\Z}{{\mathbb Z}}
\newcommand{\Q}{{\mathbb Q}}
\newcommand{\R}{{\mathbb R}}
\newcommand{\KK}{{\mathbb K}}
\newcommand{\A}{{\mathcal A}}
\newcommand{\B}{{\mathcal B}}
\newcommand{\OO}{{\mathcal O}}
\newcommand{\C}{{\mathbb C}}
\newcommand{\PP}{{\mathbb P}}
\newcommand{\AAA}{{\mathbb A}}
\newcommand{\pp}{{\mathfrak p}}
\newcommand{\qq}{{\mathfrak q}}
\newcommand{\m}{{\mathfrak m}}
\newcommand{\OS}{{T^d(X,p)}}
\newcommand{\hor}{\mathop{\rm hor}\nolimits}
\newcommand{\ver}{\mathop{\rm ver}\nolimits}

\renewcommand{\v}{{\bf v}}
\renewcommand{\u}{{\bf u}}
\newcommand{\w}{{\bf w}}
\newcommand{\x}{{\bf x}}
\newcommand{\pred}{\preccurlyeq}

\newcommand{\Tor}{\mathop{\rm Tor}\nolimits}
\newcommand{\Supp}[1]{\mmbox{{\rm Supp}({#1})}}
\newcommand{\Ass}[1]{\mmbox{{\rm Ass}({#1})}}
\newcommand{\ann}{\mathop{\rm ann}\nolimits}
\newcommand{\Ext}{\mathop{\rm Ext}\nolimits}
\newcommand{\Hom}{\mathop{\rm Hom}\nolimits}
\newcommand{\im}{\mathop{\rm im}\nolimits}
\newcommand{\wt}{\mathop{\rm wt}\nolimits}
\newcommand{\rk}{\mathop{\rm rk}\nolimits}
\newcommand{\codim}{\mathop{\rm codim}\nolimits}
\newcommand{\supp}{\mathop{\rm supp}\nolimits}
\newcommand{\coker}{\mathop{\rm coker}\nolimits}
\newcommand{\st}{\mathop{\rm st}\nolimits}
\newcommand{\hocolim}{\mathrm{hocolim}}
\newcommand{\Vect}{\mathrm{Vect}}

\newcommand{\Spec}{\mathrm{Spec}}
\renewcommand{\pp}{\mathfrak{p}}

\renewcommand{\define}[1]{{\bf \boldmath{#1}}}

\theoremstyle{definition}
\newtheorem{thm}{Theorem}
\newtheorem*{thm*}{Theorem}
\newtheorem{defn}[thm]{Definition}
\newtheorem{prop}[thm]{Proposition}
\newtheorem{pref}[thm]{}
\newtheorem*{prop*}{Proposition}
\newtheorem{conj}[thm]{Conjecture}
\newtheorem{lem}[thm]{Lemma}
\newtheorem{rmk}[thm]{Remark}
\newtheorem{cor}[thm]{Corollary}
\newtheorem{notation}[thm]{Notation}
\newtheorem{exm}[thm]{Example}
\newcommand{\Fitt}{\mathop{\rm Fitt}\nolimits}

\newcommand{\msp}{\renewcommand{\arraystretch}{.5}}
\newcommand{\rsp}{\renewcommand{\arraystretch}{1}}

\newenvironment{lmatrix}{\renewcommand{\arraystretch}{.5}\small
  \begin{pmatrix}} {\end{pmatrix}\renewcommand{\arraystretch}{1}}
\newenvironment{llmatrix}{\renewcommand{\arraystretch}{.5}\scriptsize
  \begin{pmatrix}} {\end{pmatrix}\renewcommand{\arraystretch}{1}}
\newenvironment{larray}{\renewcommand{\arraystretch}{.5}\begin{array}}
  {\end{array}\renewcommand{\arraystretch}{1}}

\title{Stratifying multiparameter Persistent Homology}

\author{Heather A. Harrington}
\address{Harrington: Mathematical Institute,
         University of Oxford, 
Oxford OX2 6GG, UK}
\email{harrington@maths.ox.ac.uk}

\author{Nina Otter}
\address{Otter: Mathematics Department \\ UCLA \\ 
Los Angeles, CA, USA 90095}
\email{otter@math.ucla.edu}

\author{Hal Schenck}
\address{Schenck: Mathematics Department \\ Iowa State University \\
    Ames \\ IA 50011\\USA}
\email{hschenck@iastate.edu}

\author{Ulrike Tillmann}
\address{Tillmann: Mathematical Institute,
         University of Oxford, 
Oxford OX2 6GG, UK \\ and The Alan Turing Institute, 96 Euston Road, London NW1 2DB, UK (TU/B/000040)}
\email{tillmann@maths.ox.ac.uk}

\subjclass[2000]{55B55, 68U05, 68Q17, 13P25 (primary) }
\keywords{Persistent Homology, Topological Data Analysis, Primary Decomposition, Hilbert Series}

\begin{abstract}
\noindent A fundamental tool in topological data analysis is
persistent homology, which allows extraction of information from complex datasets in a robust way.  
Persistent homology assigns a module over
a principal ideal domain to a one-parameter family of  spaces obtained from the data. In  applications data often depend on several parameters, and in this case one is interested in studying the persistent homology of a multiparameter family of spaces associated to the data. While the theory of persistent homology for one-parameter families is well-understood, the situation for multiparameter families is more delicate.
Following Carlsson and Zomorodian we recast the problem in the setting of multigraded algebra, and we propose multigraded Hilbert series, multigraded associated primes and local cohomology as invariants for studying multiparameter persistent homology. Multigraded associated primes provide a stratification of the region where a multigraded module does not vanish, while multigraded Hilbert series and local cohomology give a measure of the size of components of the module supported on different strata. These invariants generalize in a suitable sense the invariant for the one-parameter case. 
\end{abstract}

\maketitle

\renewcommand{\thethm}{\thesection.\arabic{thm}}
\setcounter{thm}{0}


\section{Introduction}\label{sec:one}

In  \cite{CZ}, Carlsson and Zomorodian introduced multiparameter persistent homology as a way of extending persistent homology to 
the setting of filtrations depending on more than one parameter. Near then end of their paper, 
they write

\begin{quote}
Our study of multigraded objects shows that no complete discrete invariant exists for multidimensional persistence. We still desire a discriminating invariant that captures persistent information, that is, homology classes with large persistence.
\end{quote}

We propose several such discriminating invariants by investigating the multiparameter persistent 
homology (MPH) of a multifiltered simplicial complex from the standpoint of multigraded commutative algebra.  
Our invariants  distinguish between elements of the module that  live forever, which we call ``fully persistent components'' and correspond to free elements in the module, elements that live along multiple but not all directions, which we call ``persistent components'' and correspond to free elements in certain submodules of the module, and elements that die in all directions, which we call ``transient components''.  

The key objects that we use  are the Hilbert function, which gives a measure of the size of the fully persistent components,  the associated primes, which provide a stratification of the support of MPH modules in transient, persistent and fully persistent components, and local cohomology, which gives a measure of the size of the persistent components. Though standard in commutative algebra, their use in TDA is new.

In addition to addressing the question posed by Carlsson and Zomorodian, a further aim of this paper is to provide an introduction to methods of commutative algebra to the TDA community, and to highlight their utility in TDA. To this end, we include background, definitions, and examples of the key objects, including

\begin{enumerate}

\item Graded and multigraded algebra, which is the setting for MPH,

\item The Hilbert function, which captures the rank of a multigraded MPH module, yielding a measure of the size of the fully persistent components,

\item The associated primes, which provide a stratification of the support of MPH modules into transient, persistent and fully persistent components, and

\item Local cohomology, which gives a measure of the size of the persistent components.  

\end{enumerate}

We provide an overview of the dictionary between  notions from  persistent homology and commutative algebra proposed in this paper in Table \ref{T:dictionary}.

\begin{table}[h!]
\caption{A brief overview of the dictionary between terms used in persistent homology and commutative algebra proposed in this paper. We note that in the $1$-parameter case the notion of death of a homogeneous element corresponds to a correction of the standard definition that uses the so-called ``elder rule'', see \cite[Remark 5]{OPTGH} for more details.}\label{T:dictionary}

\begin{center}
\begin{footnotesize}
\begin{tabular}{|c|c|c|}
\toprule

\multicolumn{2}{|c|}{$r$-parameter PH} & commutative algebra \\
$r=1$& $r>1$&\\\hline
\toprule
number of infinite intervals & \begin{tabular}{@{}c@{}}  minimal number of generators \\ of submodule generated by \\ fully persistent components \end{tabular} & rank\\
\hline
--- &  \begin{tabular}{@{}c@{}}  minimal number of generators \\ of submodule generated by \\  persistent components \\ living along $c_\pp$\end{tabular} & $c_\pp$-rank \\\hline
 \begin{tabular}{@{}c@{}} elements corresponding \\ to finite intervals \end{tabular} & transient components & $\bigcap_{i=1}^r \ker (x_i)$ \\\hline
\multicolumn{2}{|c|}{a homogeneous element $a$ is born at ${\bf u}\in \mathbb{N}^r$} & 
 \begin{tabular}{@{}c@{}}
 $\deg(a)=\u$ and $a$ is not in the image \\
 of $\sum_\v x^\v$ for any $\v\prec \u$ 
 \end{tabular}
 \\\hline
\multicolumn{2}{|c|}{a homogeneous element $a$ dies in degrees $D\subset \mathbb{N}^r$}  & 
 \begin{tabular}{@{}c@{}}
$\Ann a\ne (0)$, and  $D\subset \mathbb{N}^r$\\
 is obtained from the set of degrees \\
 of the set of  minimal  generators  \\
 of $\Ann a$ by adding to \\
 each degree the degree of $a$
\end{tabular}
\\\hline
\multicolumn{2}{|c|}{a homogeneous element $a$ lives forever} & $\Ann a= (0)$\\
\bottomrule
\end{tabular}
\end{footnotesize}
\end{center}
\end{table}

\begin{tikzpicture}[remember picture,overlay]
 \node [right=2cm,above=2cm,minimum width=0pt] at (-1,4.1) (A) {\rotatebox{90}{$\overbrace{\hspace{0.8cm}}^{free}$}};
 \node [right=2cm,above=2cm,minimum width=0pt] at (-1,2){\rotatebox{90}{$\overbrace{\hspace{1.6cm}}^{torsion}$}};

\end{tikzpicture}

We note that what we call multiparameter persistent homology is often also called multidimensional persistent homology in the literature. We believe that the adjective ``multiparameter'' is more appropriate. Furthermore,  the term ``dimension'' is often used to denote the degree of (persistent) homology, and its use could thus cause unnecessary confusion.

\subsection{Persistent homology}
Persistent homology was introduced about 15 years ago, and has become a standard tool in the analysis of datasets with complicated structure; for overviews see  \cite{C1}, \cite{EH}, \cite{Gh1}, \cite{W}, and \cite{OPTGH} for an introduction to computations. 
Persistent homology (PH) has been successful in attacking many problems, ranging from analysis of activity in the visual cortex \cite{SMISCR} to understanding viral evolution \cite{CCR} to modeling shapes and surfaces \cite{TMB}.

PH is appealing for applications because it provides a robust and coordinate-independent method to study qualitative features of data across different values of a parameter. One can think of the different parameter values as scales of resolution, and  PH provides a summary of how long individual qualitative features persist across the different scales of resolution. 
Roughly, persistent homology is the homology of a nested sequence of spaces $X_1\subset \dots \subset X_n=X$ associated with a suitable data set. 
The homology modules of 
such a filtered space $X$ come with additional structure 
as finitely generated graded modules over the polynomial ring $\KK[x]$
in one variable; throughout the paper (unless otherwise noted) $\KK$
denotes a field. The grading gives information about the position in the filtration, while the action of $x$ gives a shift in the filtration by one position.   Since the  ring $S=\KK[x]$ is a principal ideal domain (PID), the classical  
structure theorem for a finitely generated module $M$ over a PID $S$ applies: 
\begin{equation}\notag
M \cong S^n \oplus \bigoplus\limits_{j=1}^m S/p_i,
\end{equation}
where the $p_i$ are non-trivial principal ideals; the components
$S/p_i$ are  torsion. In the setting of persistent homology the
module $M$ has a ${\mathbb N}$ grading,  so the $p_i$ are also
graded, with $p_i = (x^{\gamma_i})$. Thus, the summands are also
graded, and the $x^{\alpha_i}$ and $x^{\beta_j}$ below reflect a shift in
grading: $x^{\alpha}S$ is a copy of $S$, but shifted so the unit is in
degree $\alpha$. Hence in the graded case we  have the decomposition:
\[
M \cong \bigoplus_{i=1}^n x^{\alpha_i} \KK[x]\oplus \bigoplus_{j=1}^m x^{\beta_j} \KK[x]/(x^{\beta_j+\gamma_j})\label{E:dec intro}\tag{$\star$} \, .
\]

One can read off from the decomposition in Equation \eqref{E:dec intro} a finite collection of  infinite and finite intervals, which collectively are called a ``barcode'', and completely characterizes the isomorphism class of the module: 
the torsion part gives $l$ finite intervals $[\beta_j, \beta_j+\gamma_j)$, for $j=1,\dots , l$, that correspond to features that only exist for a finite number of filtration steps, while the free summands give $h$ infinite intervals $[\alpha_i,\infty)$, for $i=1,\dots , h$, corresponding to   features that are hardier and survive (persist) forever.

In many applications, data depend not only on one, but several parameters, and to apply PH to such data one therefore needs to study the evolution of qualitative features across several parameters. The homology of a multifiltered space has the structure of a multigraded module over a polynomial ring $S=\KK[x_1,\dots , x_r]$ in $r$ variables, where $r$ is the number of parameters. Unfortunately, in this case we no longer have a decomposition such as the one for the  one-parameter case, and the problem of finding a complete characterization of isomorphism classes analogous to the one-parameter case has been shown to be hopeless \cite{CZ}.  

On the other hand, for applications one does not need a complete classification of modules; rather, what one needs are invariants that are computable, suitably robust with respect to  perturbations in input data, and amenable to statistical interpretation in the sense that one can assess the quality or uncertainty of the resulting invariant using  statistical techniques.

\subsection{Related work}
Several different research approaches have been taken to study multiparameter persistent 
homology:

\begin{itemize}
\item Rank invariant: in \cite{CZ} the authors propose the rank invariant as an invariant for multiparameter modules. They show that this invariant is equivalent to barcodes in the one-parameter case. We briefly discuss the rank invariant in Section \ref{2nd sz and rank} and relate it to our work.
\item Efficient computation of presentations of modules: in \cite{CSZ} the authors propose a 
polynomial-time algorithm to compute a presentation of homology of ``one-critical'' multifiltered simplicial complexes, which roughly are multifiltered simplicial complexes in which each simplex enters the complex at exactly one filtration value. In \cite{CSZ} a polynomial-time algorithm for the computation of the presentation of the homology of an arbitrary multifiltered simplicial complex is proposed. 
\item Restriction to a line: in \cite{B+08} the authors study the collection of one-parameter modules associated to a multiparameter module by restricting it to lines with positive real slopes. They show that such a collection of one-parameter modules gives the same information as the rank invariant. Building on this, in \cite{LW} the authors introduce a tool for the visualization of barcodes  of a multiparameter module restricted to a line. We discuss several ways of associating an $\N$-graded module to an $\N^r$-graded module in Section \ref{S:restriction to line}.
\item Noise: in \cite{SCWLRO} a general notion of noise for persistence modules is proposed, and related invariants associated to multiparameter modules are studied. The computability of some of these invariants has been studied in \cite{G16}.
\item Tor modules  \cite{K}:  invariants that refine the discrete invariants in \cite{CZ}.
\item Fringe presentations \cite{M2}: this is the work that is most closely related to our work. It studies births and deaths of generators in the more general setting of modules over the monoid ring $\KK[\mathbb{R}^r]$. 
\end{itemize}

\subsection{Structure of paper}
The paper is structured as follows:
\begin{itemize}
\item  In Section \ref{S:MP and MA}, we review multiparameter persistent homology and multigraded algebra, and we present a proof of the well-known fact that every multigraded module is the homology of a multifiltered simplicial complex.

\item  In Section \ref{S:hilbert series}, we discuss invariants associated with the free resolution of a module: the Hilbert function,  multigraded Hilbert series and Hilbert polynomial. 
We show that the Hilbert function and Hilbert series are invariants encoding key properties of  MPH.  Specifically, for a module $M$ over an integral domain, the natural measure of size is the rank $\rk(M)$, which can be read off from its Hilbert function and multigraded Hilbert series. Furthermore, we show that one can reduce the computation of the rank of a module to the computation of (ordinary) homology of a simplicial complex. 
\item 
In Section \ref{S: ass prim}, we introduce a finer invariant for modules, which is given by the associated primes; we discuss how minimal associated primes give information on the coordinate subspaces on which the module does not vanish, while non-minimal (so-called embedded) associated primes give a stratification of these subspaces (see Fig.~\ref{F:support} for an example of two modules that have the same Hilbert series, but different stratifications).  We then use local cohomology to give a measure of the size of the submodule generated by elements that only live along some direction, relate our work to the rank invariant introduced in \cite{CZ}, and show that the associated primes may be computed by only computing the cokernels of the maps in an appropriate chain complex. 
\item 
In Section \ref{S:restriction to line} we investigate different ways to associate an $\N$-graded module over a polynomial ring in one variable  to an $\N^r$-graded module over $\KK[x_1,\dots , x_r]$. First, we adapt the restriction to a line studied in  \cite{B+08,LW} for   $\mathbb{R}^r$-graded modules to our $\N^r$-graded setting.
We then  discuss how one can restrict a module to the diagonal using methods from commutative algebra, and show that one can read off the rank of a module from the rank of a module restricted to the diagonal. 
\item In Section \ref{S:conclusions}, we summarize our results and discuss future work.
\end{itemize}

We provide {\tt Macaulay2} code that we used for  the computations in this paper at \url{https://github.com/n-otter/MPH}.


\section{multiparameter persistence and multigraded algebra}\label{S:MP and MA}

It is well known that  studying multiparameter persistent homology amounts to the study of a module over a multivariate polynomial ring. 
In this section we first review standard but useful facts about multiparameter persistent homology;  we then  
give a brief overview of multigraded modules and recall the Hilbert Syzygy Theorem. Finally, we present a proof of the well-known fact that any multigraded module is the homology of a multifiltered space.

\subsection{Multiparameter persistence}

\begin{defn}
Denote by $\mathbb{N}^r$ the set of $r$-tuples of natural numbers, and
define the following partial order on $\mathbb{N}^r$: for any pair of
elements $\u,\v\in \mathbb{N}^r$ we define $\u \preccurlyeq \v$ iff
$u_i\leq v_i$ for all $i=1,\dots r$, where we write $\u= (u_1,\dots ,u_r)$ and $\v= (v_1,\dots , v_r).$ 
Given a collection of simplicial complexes $\{K_\u\}_{\u\in \mathbb{N}^r}$ indexed by $\mathbb{N}^r$, we say that $\{K_\u\}_{\u\in \mathbb{N}^r}$ is an \define{$r$-filtration} if whenever $\u\pred \v$ we have that $K_\u\subseteq K_\v$. 
If there exists $\u'\in \mathbb{N}^r$ such that $K_{\u}=K_{\u'}$ for all $\u\succcurlyeq \u'$, then we say that the $r$-filtration \define{stabilizes}.  A \define{multifiltration} is an $r$-filtration for some $r$. 

An \define{$r$-filtered simplicial complex} is a simplicial complex $K$ together with a multifiltration $\{K_\u\}_{\u\in \mathbb{N}^r}$ that stabilizes and such that $K=\cup_{\u\in \mathbb{N}^r} K_\u$.  An $r$-filtered simplicial complex $(K, \{K_\u\}_{\u\in \mathbb{N}^r})$ is \define{finite} if $K$ is finite.  
A \define{multifiltered simplicial complex} is an $r$-filtered simplicial complex for some $r\geq 1$.

Given a multifiltered simplicial complex $(K,\{K_\u\}_{\u\in \mathbb{N}^r})$, for each $x\in K$ we call the minimal elements $\u\in \N^r$ (with respect to the partial order $\pred$) at which it enters the filtration  its \define{entry degrees}.  If every $x\in K$ has  exactly one entry degree we call the multifiltered space \define{one-critical}.
 \end{defn}

Let $K$ be a multifiltered simplicial complex, and  let $i=0,1,2,\dots$.  
For any $\u\in \mathbb{N}^r$ denote by $C_i(K_\u)$ the $\KK$-vector space with basis given by the $i$-simplices of $K_\u$, and similarly by $H_i(K_\u)$ the $i$th simplicial homology with coefficients in $\KK$. 
Whenever $\u\pred \v$ we have that  the inclusion maps $K_\u\to K_\v$ induce $\KK$-linear maps $\psi_{\u,\v}\colon C_i(K_\u)\to C_i(K_\v)$ and $\phi_{\u,\v}\colon H_i(K_\u)\to H_i(K_\v)$ such that whenever $\u \pred \w \pred \v$ we have that $\psi_{\w,\v}\circ \psi_{\u,\w}=\psi_{\u,\v}$, and similarly $\phi_{\w,\v}\circ \phi_{\u,\w}=\phi_{\u,\v}$. We thus give the following definition:

\begin{defn}
Let $K$ be a multifiltered simplicial complex. 
The \define{$i$th chain module} of $K$  over $\KK$ is the tuple $\left(\{C_i(K_\u)\}_{\u\in \mathbb{N}^r}, \{\psi_{\u,\v}\colon C_i(K_\u)\to C_i(K_\v)\}_{\u\pred \v}\right)$.

 Similarly,
the \define{simplicial homology} with coefficients in $\KK$ of $K$ is the tuple \\
${\left(\{H_i(K_\u)\}_{\u\in \mathbb{N}^r}, \{\phi_{\u,\v}\colon H_i(K_\u)\to H_i(K_\v)\}_{\u\pred \v}\right)}$, where the maps $\psi_{\u,\v}$ and $\phi_{\u,\v}$ are those induced by the inclusions.
\end{defn}

The $i$th chain module and homology of a multifiltered simplicial complex are a ``multiparameter'' example of what is usually called a persistence module in the persistent homology literature:

\begin{defn}
An \define{$r$-parameter persistence module} is given by a tuple\\
 $\left( \{M_\u\}_{\u\in \mathbb{N}^r}, \{\phi_{\u,\v}\colon M_\u\to M_\v\}_{\u\pred \v}\right)$ where $M_\u$ is a $\KK$-module for each $\u$ and $\phi_{\u,\v}$ is a $\KK$-linear map, such that whenever $\u\pred \w \pred \v$ we have $\phi_{\w,\v}\circ \phi_{\u,\w}=\phi_{\u,\v}$. A \define{multiparameter persistence module} is an $r$-parameter persistence module for some $r\geq 1$.
 
A \define{morphism} of multiparameter persistence modules 
\[
f\colon \left( \{M_\u\}_{\u\in \mathbb{N}^r},  \{\phi_{\u,\v}\}_{\u\pred \v} \right)\to \left( \{M'_\u\}_{\u\in \mathbb{N}^r},  \{\phi'_{\u,\v}\}_{\u\pred \v} \right)
\]
 is a collection of $\KK$-linear maps $\{f_\u\colon M_\u\to M'_\u\}_{\u\in \mathbb{N}^r}$ such that $f_\v\circ \phi_{\u,\v}=\phi'_{\u,\v}\circ f_\u$
for all $\u\pred \v$. 
\end{defn}
Let $K$ be a multifiltered simplicial complex. An example of a morphism of multiparameter persistence modules is given by the differentials of the simplicial chain complex $(C_\bullet(K_\u),d_\bullet\colon C_\bullet(K_\u)\to C_{\bullet-1}(K_\u))$, for each $\u\in \mathbb{N}^r$, which induce morphisms of multiparameter persistence modules 

\begin{equation}\label{E:differential}
\left(\{C_i(K_\u)\}_{\u \in \mathbb{N}^r}, \{\psi_{\u,\v}\}_{\u \pred \v}\right)\to \left(\{C_{i-1}(K_\u)\}_{\u \in \mathbb{N}^r}, \{\psi_{\u,\v}\}_{\u \pred \v}\right)
\end{equation}
 for any $i \ge 0$ and where $C_{-1} (K_{\bold u}) = 0$ is the empty sum by convention.

\begin{rmk}\label{R:PH as functor}
One could  equivalently define an $r$-parameter persistence module as
being a functor from the poset category $\N^r$ to the category with
objects $\KK$-vector spaces and morphisms $\KK$-linear maps, but we
have opted to give a more hands-on definition in this paper.
\end{rmk}

\subsection{Multigraded algebra}
The name multiparameter persistence module is justified by the fact
that a multiparameter persistence module  is a module over the
polynomial ring $S=\KK[x_1,\dots ,x_r]$, which is explained in detail
below. We now recall some notions about multigraded rings
and modules: while the ${\mathbb N}$ grading on $S$ defined by 
$\deg(x_1^{u_1}\cdots x_r^{u_r})=\sum u_i$ may be familiar to some
readers, for us the ${\mathbb N}^r$ grading $\deg(x_1^{u_1}\cdots
x_r^{u_r})=(u_1,\ldots, u_r)$ will take center stage. Throughout the
paper, ring will always mean a commutative ring with identity. 

\begin{defn}\label{gradedRing}
Let $R$ be a ring. We say that $R$ is \define{graded by $\mathbb{N}^r$} (or \define{$\mathbb{N}^r$-graded}) if there is a collection of  abelian groups $\{R_\u\}_{\u\in \mathbb{N}^r}$ such the underlying abelian group of $R$ can be written as  $\oplus_{\u\in \mathbb{N}^r} R_\u$, and such that this decomposition is compatible with the ring structure: for all $\u,\v\in \mathbb{N}^r$ and all $r_\u\in R_\u$ and $r_\v\in R_\v$ we have that $r_\u r_\v\in R_{\u+\v}$. The \define{homogeneous elements} of the ring $R$ are the elements $r\in R$ such that there exists $\u \in \mathbb{N}^r$ with $r\in R_\u$, and $\u$ is called the \define{degree} of $r$ and denoted by $\deg(r)$.

Given a module $M$ over an $\mathbb{N}^r$-graded ring $R$, we say that $M$ is \define{graded by $\mathbb{N}^r$} (or \define{$\mathbb{N}^r$-graded})  if there exists a collection $\{M_\u\}_{\u\in \mathbb{N}^r}$ of abelian groups  such that the underlying abelian group of $M$ can be written as $\oplus_{\u\in \mathbb{N}^r} M_\u$ and the decomposition is compatible with the $R$-module structure: for all $\u,\v\in \mathbb{N}^r$ we have that $r m\in M_{\u+\v}$ for all $r\in R_\u$ and all $m\in M_\v$. 
Similarly as for graded rings, an element $m$ of the module which is contained in one of the direct summands $M_\u$ is called \define{homogeneous},  $\u$ is called the \define{degree} of $m$, and denoted by $\deg(m)$.

A \define{homomorphism} of $\N^r$-graded modules is a homomorphism  $f\colon M\to N$ of $\N^r$-graded modules  such that for all $\u\in \mathbb{N}^r$ we have $f(M_\u)\subset N_\u$. 
\end{defn}

Now consider  the polynomial ring $S=\KK[x_1,\ldots, x_r]$. This ring is naturally graded by $\mathbb{N}^r$:
\[
S = \bigoplus_{{\bf u} \in \mathbb{N}^r}\KK {\bf x} ^{ \bf u}
\]

\noindent
where we denote by ${\bf u}$ the $r$-tuple $(u_1,\dots , u_r)$ and the notation ${\bf x}^ {\bf u}$ stands for $x_1^{u_1}\dots x_r^{u_r}$.

Given a multiparameter persistence module $\left( \{M_\u\}_{\u\in \mathbb{N}^r}, \{\phi_{\u,\v}\}_{\u\pred \v}\right)$ let $M$ denote the direct sum $\bigoplus_{\u\in \mathbb{N}^r}M_\u$. Then $M$ is  an S-module graded by $\mathbb{N}^r$: the  action of $\KK$ is given componentwise on each direct summand, and similarly for any $\v \in \mathbb{N}^r$ the action of $\x^\u$ on $M_\v$ is given by the linear map $\phi_{\v,\v+\u}$.

Conversely, given an  $S$-module $M$ graded by $\mathbb{N}^r$, we obtain a multiparameter persistence module 
$\left( \{M_\u\}_{\u\in \mathbb{N}^r}, \{\x^{\v-\u}\colon M_{\u}\to M_\v\}_{\u\pred \v}\right)$. More precisely, we have the following:

\begin{thm}\label{T: correspondence theorem}\cite{CZ}
There is an isomorphism of categories between the category with objects given by $r$-parameter persistence modules and morphisms given by morphisms of $r$-parameter persistence modules,  and the category with objects given by $\mathbb{N}^r$-graded modules over $\KK[x_1,\dots  x_r]$ and  homomorphisms of $\mathbb{N}^r$-graded modules.
\end{thm}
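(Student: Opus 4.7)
The plan is to exhibit explicit mutually inverse functors $F$ and $G$ between the two categories, working at the level of objects first and then lifting to morphisms. On objects, $F$ takes a persistence module $\left(\{M_\u\}_{\u\in\N^r}, \{\phi_{\u,\v}\}_{\u\pred\v}\right)$ to the direct sum $\bigoplus_{\u\in\N^r} M_\u$, equipped with its evident $\N^r$-grading and with the $S$-action defined on a homogeneous element $m\in M_\v$ by $\x^\u\cdot m := \phi_{\v,\v+\u}(m)$, extended $\KK$-linearly. In the reverse direction, $G$ takes an $\N^r$-graded $S$-module $N = \bigoplus_\u N_\u$ to the tuple whose $\u$th vector space is $N_\u$ and whose structure maps $\phi_{\u,\v}\colon N_\u\to N_\v$ are given by multiplication by $\x^{\v-\u}$.

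The main thing to check is that $F$ and $G$ land in the correct categories. For $F$, this reduces to verifying the single non-formal axiom that the prescribed action is associative: the identity $\x^\u\cdot(\x^\v\cdot m) = \x^{\u+\v}\cdot m$ for $m\in M_\w$ unpacks to the cocycle relation $\phi_{\w+\v,\w+\v+\u}\circ\phi_{\w,\w+\v} = \phi_{\w,\w+\v+\u}$ that is built into the definition of a persistence module. For $G$, the grading axiom of Definition \ref{gradedRing} guarantees that multiplication by $\x^{\v-\u}$ restricts to a $\KK$-linear map $N_\u\to N_\v$, and the compatibility $\phi_{\w,\v}\circ\phi_{\u,\w} = \phi_{\u,\v}$ follows from associativity of scalar multiplication in $N$ together with $\x^{\v-\w}\x^{\w-\u} = \x^{\v-\u}$.

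On morphisms, the naturality condition $f_\v\circ\phi_{\u,\v} = \phi'_{\u,\v}\circ f_\u$ is exactly the statement that $\bigoplus_\u f_\u$ commutes with multiplication by each monomial $\x^{\v-\u}$, hence by $\KK$-linearity and multiplicativity with the full $S$-action; the converse direction for $G$ is identical, and preservation of composition and identities is immediate from the direct-sum formulas.

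Finally, one needs to observe that $GF$ and $FG$ equal the identity functors on the nose rather than merely being naturally isomorphic to them (this is what distinguishes an isomorphism of categories from an equivalence). This is straightforward since $F$ and $G$ perform inverse bookkeeping operations on the same underlying data: starting from a persistence module, $GF$ returns the same indexed family $\{M_\u\}$ and, by construction, the same $\phi_{\u,\v}$; starting from a graded $S$-module, $FG$ rebuilds the same graded abelian group with the same action, since that action is determined by the effect of each monomial on homogeneous elements and extends uniquely by $\KK$-linearity. The only genuine content of the argument is the associativity check for $F$, and I expect no serious obstacle beyond this routine unpacking.
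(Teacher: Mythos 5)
Your proposal is correct and matches the paper's approach exactly: the paper sketches precisely these two constructions (taking the direct sum with the $\x^\u$-action given by $\phi_{\v,\v+\u}$, and conversely recovering the structure maps as multiplication by monomials) before citing Carlsson--Zomorodian for the theorem itself. Your write-up simply supplies the routine verifications the paper leaves implicit, including the observation that the two assignments are inverse on the nose rather than merely up to natural isomorphism.
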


Theorem \ref{T: correspondence theorem} is usually called
Correspondence Theorem in the persistent homology literature, and
provides a concrete connection between the objects defined prior to
Remark~\ref{R:PH as functor} and the multifiltered simplicial
complexes in \S~\ref{examplesSection}.

 Thanks to this theorem we can approach the problem of studying multiparameter persistent homology by reformulating it in the language of multigraded algebra. \label{R:PH as functor}
When dealing with applications, one is only interested in finitely generated $\N^r$-graded modules over the polynomial ring $S=\KK[x_1,\dots , x_r]$, where we adopt the notation  common in commutative algebra. Every $S$-module has a free resolution:

\begin{defn}\label{FreeRes}
A free resolution for an $S$-module $M$ is an exact sequence 
\begin{equation}\label{HSeqn2}\notag
F_\bullet  : \cdots \longrightarrow F_i \stackrel{d_i}{\longrightarrow} F_{i-1}\longrightarrow \cdots \longrightarrow F_0  \longrightarrow M \longrightarrow 0,
\end{equation}
where the $F_i$ are free $S$-modules. 
A free resolution is \define{$\mathbb{N}^r$-graded} if the $S$-modules $F_i$ are $\mathbb{N}^r$-graded for each $i$ and the homomorphisms $d_i$ are homomorphisms of $\N^r$-graded modules. We say that a resolution has \define{length $n$} if $F_n'=0$ for all $n'> n$.
\end{defn}

By the Hilbert Syzygy Theorem the free resolution of a finitely generated module has finite length:

\begin{thm}\cite{MS}\label{syzygy thm}
Every finitely generated $\mathbb{N}^r$-graded $S$-module $M$ has a free $\mathbb{N}^r$-graded resolution of length at most $r$ such that the free modules in the resolution are  finitely generated.
\end{thm}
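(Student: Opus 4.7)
The plan is to construct a minimal $\N^r$-graded free resolution of $M$ and bound its length by $r$ using the Koszul complex together with the symmetry of Tor.

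First, I would build the resolution iteratively. Since $S=\KK[x_1,\ldots,x_r]$ is Noetherian, $M$ admits a finite set of homogeneous generators; picking a minimal such set with degrees $\u_1,\ldots,\u_{n_0}$ produces an $\N^r$-graded surjection $F_0 \to M$ from the finitely generated graded free module $F_0=\bigoplus_i S(-\u_i)$. Its kernel is again a finitely generated $\N^r$-graded module by Hilbert's basis theorem applied to $F_0$, so we can iterate. This yields an $\N^r$-graded free resolution $F_\bullet \to M$ in which each $F_i$ has finite rank. Minimality means the entries of each differential lie in $\m=(x_1,\ldots,x_r)$; equivalently, the differentials of the complex $F_\bullet\otimes_S S/\m$ vanish. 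Consequently $\rk F_i = \dim_\KK \Tor_i^S(M,\KK)$, so the length of the minimal resolution equals the largest $i$ with $\Tor_i^S(M,\KK)\neq 0$.

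Next, I would compute $\Tor_i^S(\KK,M)$ from the other variable of Tor, using the Koszul complex $K_\bullet=K_\bullet(x_1,\ldots,x_r;S)$ on the variables. This is an $\N^r$-graded complex of finitely generated free $S$-modules concentrated in homological degrees $0,\ldots,r$, and it is exact except in degree $0$, where it resolves $S/\m=\KK$. Hence $K_\bullet$ is a free resolution of $\KK$ of length exactly $r$, so $\Tor_i^S(\KK,M)=0$ for every $i>r$. By the symmetry of Tor one has $\Tor_i^S(M,\KK)\cong\Tor_i^S(\KK,M)$, forcing the minimal free resolution of $M$ to have length at most $r$.

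The main obstacle is verifying that the Koszul complex on $x_1,\ldots,x_r$ is acyclic in positive homological degrees. I would prove this by induction on $r$, writing $K_\bullet(x_1,\ldots,x_r;S)\cong K_\bullet(x_1,\ldots,x_{r-1};S)\otimes_S K_\bullet(x_r;S)$ and using the long exact sequence of the associated mapping cone together with the fact that $x_r$ is a nonzerodivisor on $S/(x_1,\ldots,x_{r-1})$; this makes the higher Koszul homology collapse at each inductive step. Everything else in the argument, including the existence of minimal generating sets in the graded setting and the correct bookkeeping of degree shifts under $\u \mapsto S(-\u)$, follows routinely from the graded Nakayama lemma applied to the $\N^r$-graded local situation at $\m$.
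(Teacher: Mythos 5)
The paper does not supply its own proof of this theorem; it is stated with a citation to Miller--Sturmfels \cite{MS} and no argument is given. So there is no in-paper proof to compare against, and the relevant question is simply whether your argument is correct.

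Your proof is correct, and it is the standard modern argument. You construct a minimal $\N^r$-graded free resolution using the graded Nakayama lemma at the irrelevant ideal $\m=(x_1,\ldots,x_r)$ (which is the unique graded maximal ideal, so Nakayama applies in the multigraded-local setting), identify $\rk F_i = \dim_\KK \Tor_i^S(M,\KK)$, and then cap the homological degree by computing $\Tor$ in the other variable via the Koszul resolution $K_\bullet(x_1,\ldots,x_r;S)$ of $\KK$, which has length $r$; Tor symmetry closes the loop. The Koszul acyclicity step via iterated mapping cones on a regular sequence is also standard and correct, and you are right to flag it as the real content. Two small points worth making explicit if you wrote this out in full: (i) the Koszul complex is itself $\N^r$-graded, with $i$-th term $\bigoplus_{1\le j_1<\cdots<j_i\le r}S(-{\bf e}_{j_1}-\cdots-{\bf e}_{j_i})$, which is needed so that the resulting bound applies to $\N^r$-graded (not just $\N$-graded) Tor; and (ii) in the multigraded setting ``minimal generating set'' means choosing homogeneous generators whose images form a $\KK$-basis of $M/\m M$, which exists by the same graded Nakayama argument. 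Textbook treatments (including the reference the paper cites) often prove this result instead by Gr\"obner-basis methods (Schreyer's theorem, giving an algorithmic bound) or by a change-of-rings induction; your Koszul/Tor route is shorter and more conceptual, at the cost of invoking the balancing of Tor, whereas the Schreyer-type proof is longer but directly produces an explicit resolution. Either is a legitimate proof of the statement as used in the paper.
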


In particular, for $r=1$ we obtain that the free resolution of an $S$-module has length $1$, and therefore 
there  is a finite set of $n$ generators, as well as a finite set of $m$ relations such that we can write the module $M$ as the cokernel of a  diagram of free $\mathbb{N}$-graded $S$-modules of the following form:
\[
\bigoplus_{j=1}^m S(-\u_j) \longrightarrow \bigoplus_{i=1}^n S(-\v_i)\,,
\] 
where we use the convention that  a free $S$-module of rank one with homogeneous generator of degree $\u$ is denoted  by 
$S(-\u)$, so that $S(-\u)_\v = S_{-\u+\v}$, and thus in particular the homogeneous elements of degree $\u$ of $S(-\u)$ are the homogeneous elements of degree $0$ of $S$. 
We also  say that $S(-\u)$ is $S$ \define{shifted by $\u$}.

One can then write the module $M$ uniquely as a direct sum \cite{W85}
\begin{equation}\label{E:decomposition one-p}
M \cong \bigoplus_{i=1}^n x^{\alpha_i} \KK[x]\oplus \bigoplus_{j=1}^m x^{\beta_j} \KK[x]/(x^{\beta_j+\gamma_j}) \, ,
\end{equation}
where the $\alpha_i, \beta_j$ and $\gamma_j$ are positive integers.

When $r>1$ one in general no longer has a decomposition such as the one in Equation \eqref{E:decomposition one-p} because of the presence of non-trivial relations between relations, which are  called \define{$2$nd syzygies},  as well as syzygies of higher order.

A  convenient reference for the theory of multigraded modules is \cite{MS}.

\begin{rmk}
Associated to a simplicial complex $\Delta$ on $v$ vertices is the Stanley-Reisner ring $\K[\Delta]$, which is $\mathbb{N}^v$ graded (see \cite[ \S 1.1]{MS}), with vertex $i$ having degree ${\bf e}_i$, edge $v_iv_j$ of degree ${\bf e_i}+{\bf e_j}$, and so on. This differs from the grading of multiparameter persistence, where every face is assigned a degree reflecting when it is born, which guarantees the set of faces born at or before a fixed multidegree is a subcomplex. For example in Example~\ref{FirstEx},  vertices $c$ and $e$ occur in degree $(0,0)$,
while edge $ce$ occurs in degree $(0,1)$.

%
\end{rmk}

\begin{rmk}
 Free resolutions depend on the characteristic of the underlying field, even for monomial ideals: a classical example of Reisner \cite{R} shows that the Stanley--Reisner ideal for the standard six vertex triangulation of $\R\PP^2$ has different free resolution over $\Q$ and over $\Z/2$, reflecting the difference in simplicial homology with these choices of coefficients.
\end{rmk}

\subsection{Multigraded modules as multiparameter persistence modules}\label{S:reduction}
We have seen that the  homology of a multifiltered space is an example of a multiparameter persistence module.
The converse is also true, as stated first  in \cite{CZ} without proof.
For completeness we provide a proof of this statement.

\begin{thm}\label{T:pers mod hom}
If the coefficient ring $\mathbb K$ is a prime field $\mathbb F_p$, 
$\mathbb Q$, or $\mathbb Z$, then any finitely generated,
$\N^r$-graded $S$-module $M$ can  be realized as the homology in
degree $i$, for any $i>0$, of a finite multifiltered simplicial complex.
\end{thm}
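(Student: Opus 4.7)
My plan is to start from a finite free presentation
\[
\bigoplus_{j=1}^m S(-\u_j) \xrightarrow{\phi} \bigoplus_{k=1}^n S(-\v_k) \longrightarrow M \longrightarrow 0,
\]
guaranteed by Theorem~\ref{syzygy thm}. The crucial observation making a topological realization tractable is that, in the multigraded setting, the $\KK$-vector space $S_\w$ is one-dimensional (spanned by $\x^\w$) for every $\w \in \N^r$, so each entry of the matrix representing $\phi$ has the form $\phi_{kj}=c_{kj}\,\x^{\u_j-\v_k}$ with $c_{kj}\in \KK$. The presentation thus reduces to the combinatorial data of the shift vectors $\v_k,\u_j$ together with the scalar matrix $(c_{kj})$, and realizing $M$ topologically becomes a problem of simulating these scalars by simplicial attaching maps.

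To realize $M$ as $H_1(K;\KK)$ for the case $i=1$, I would proceed in two stages. First, for each generator $g_k$ I introduce a simplicial model of a $1$-cycle born at $\v_k$: take a triangle $T_k$ consisting of three vertices and three edges (no filling $2$-simplex) and assign entry degree $\v_k$ to every simplex of $T_k$; the triangles $T_1,\ldots,T_n$ may be taken disjoint or glued along a common basepoint at degree $(0,\ldots,0)$. Second, for each $j=1,\ldots,m$ I attach a $2$-chain $D_j$, born at degree $\u_j$, whose simplicial boundary equals $\sum_k c_{kj}\,\gamma_k$, where $\gamma_k=\partial T_k$. Concretely, $D_j$ is built by attaching a $2$-cell to the $1$-skeleton via a map $S^1\to \bigvee_k T_k$ that winds $c_{kj}$ times around $T_k$ for each $k$, then triangulating (with barycentric subdivision if needed); all new simplices receive entry degree $\u_j$.

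A direct check then shows that at each multidegree $\w$ the subcomplex $K_\w$ satisfies $H_1(K_\w;\KK)\cong M_\w$, with $1$-cycles generated by the $\gamma_k$ for $\v_k\pred \w$ and $2$-boundaries realizing exactly the $\KK$-span of those columns of $\phi$ that are born by $\w$; the inclusion maps $K_\w\subseteq K_{\w'}$ induce the $S$-action $\x^{\w'-\w}$ on $M$. For general $i>1$, the same recipe works after replacing triangles by simplicial $i$-spheres (boundaries of $(i+1)$-simplices) and $2$-chains by $(i+1)$-dimensional analogues; alternatively one applies the multifiltered suspension $i-1$ times to the $i=1$ construction, which shifts homology by one degree and preserves both finiteness and stabilization of the filtration.

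The main obstacle is the simplicial realization of the boundary relation $\partial D_j=\sum_k c_{kj}\,\gamma_k$: one must produce an actual simplicial $(i+1)$-chain whose simplicial boundary is the prescribed $\KK$-linear combination on the nose, not merely a cellular or homological equality. This is where the restriction on $\KK$ enters: for $\KK=\Z$, classical Moore-space constructions (via $c_{kj}$-fold covering cells and iterated attachments) realize arbitrary integer coefficients at the simplicial level; for $\KK=\mathbb F_p$ and $\Q$, one can either carry out an analogous direct construction or reduce to the integer case and invoke the Universal Coefficient Theorem, verifying compatibility with the multifiltration throughout.
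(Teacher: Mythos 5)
Your proposal follows essentially the same strategy as the paper's proof: build a CW complex with one $i$-sphere per generator of $M$ and one $(i+1)$-cell per relation, attached by a map whose composition with the projection to the $k$th sphere has degree equal to the scalar $c_{kj}$ appearing in the presentation, filter by birth degree, and then pass to a homotopy-equivalent simplicial complex. Two details you elide that the paper makes explicit are the handling of $\Q$, where one must first clear denominators (replacing each relation $r$ by $n_r\,r$) so that the attaching degrees are integers, and the passage from CW to simplicial, which the paper carries out carefully via the simplicial approximation theorem and mapping cylinders rather than just ``then triangulating.''
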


\begin{proof}
By assumption, because $M$ is $\N^r$-graded, it is generated by a finite set $\mathcal G$ of   homogeneous elements. Let $F_S(\mathcal G)$ denote the free $S$-module generated by $\mathcal G$ and let $K$ denote the kernel of the 
canonical surjective homomorphisms $F_S(\mathcal G) \to M$. By Hilbert's Syzygy Theorem (Theorem \ref{syzygy thm}), $K$ itself is generated by a finite set $\mathcal R$ of homogeneous elements. Each $r\in \mathcal R$ can be written as a 
finite linear combination
$$
r = \sum _{g\in \mathcal G} \, c(r, g) {\bf x}^{\deg(r)-\deg(g)} g
$$
with $c(r,g) \in \mathbb K$. 

When $\mathbb K$ is the prime field $\mathbb F_p = \{ 0, \dots , p-1\}$ or the integers $\mathbb Z$, then
we can interpret $ c(r,g)$ as an integer. When $\mathbb K = \mathbb Q$, we clear the denominators in the above equation by multiplying  by $ n_r $, a multiple of the divisors of the non-zero $c(r,g)$ as $g \in \mathcal G$ varies. Replacing the relation $r$ by $n_r r$, we may now assume that the  coefficients $c(r,g)$ are integers. 

We will first build a finite, multifiltered 
cell complex $X$ with $\N^r$-graded $i$-th homology  isomorphic to $M$.
Let 
$$
Z = \bigvee_{g \in \mathcal G} S^i_g 
$$
be a bouquet   of $i$-dimensional spheres, one for each generator $g$ of $M$.
More precisely, $Z$ is the cell complex composed of a base point  $*$ (of multidegree $\bold 0$) and $i$-cells
$e^i_g$ in degree $\deg(g)$, one for each generator $g$ and attached to $*$ by the constant map. For each relation $r \in \mathcal R$, we attach an $i+1$-cell $e^{i+1}_r$  in degree $\deg(r)$ by a map $\phi_r: \partial e^{i+1}_r = S^{i}_r
\to Z$ satisfying the following property. Define 
the projection map $P_g: Z \to S^i_g$ to be the map  that is  the identity  on $S^i_g$ and collapses all other spheres onto the base point. Then $\phi_r$ is chosen such that for all $g$ and $r$,  the composition $P_g \circ \phi_r: S^i_r \to S^i_g$ has degree $c(r,g)$. Let $X$ be the resulting cell complex. 
It has a natural 
multifiltration
$$
X_\v = \bigcup _{\deg(g) \pred \bold v} e^i_g \cup \bigcup_{\deg(r) \pred 
\bold v} e^{i+1} _r.
$$
By construction the cellular $\N^r$-graded  $i$-th homology of $X$ is $M$.
Indeed, the cellular chains of $X$ are given in dimension $i$ by $C_i(X) = S[ e^i_g \, | \, g \in \mathcal G]$ and in dimension $i+1$  by 
$C_{i+1} (X) = S[ e^{i+1} _r\, | \, r \in \mathcal R]$ with the $i$th boundary map constant zero and the
$i+1$-th boundary given by
$$
\partial _{i+1} ( e ^i_r) = \Sigma_{ g \in \mathcal G} \, \,  c(r,g) \, e^i_g.
$$
Finally we note that the cell complex $X$ can be replaced by a homotopy equivalent simplicial complex $\tilde X$. To do so, we replace $Z$  by a homeomorphic simplicial complex where each 
sphere $S^i_g$ is identified with  $\partial \triangle^{i+1}_g$, a copy of  the boundary  of the standard $i+1$-simplex, all glued together at the first $0$-simplex. Similarly, each $i+1$ cell $e^{i+1}_r$ is identified with a copy of the standard
$i+1$ simplex $\triangle^{i+1}_r$. By repeated application of the simplicial approximation theorem and taking subdivisions of the source, each of the attaching maps $\phi_r$ can be replaced by a homotopic 
simplicial map $\tilde \phi_r: \partial \triangle ^{i+1}_r \to \partial \triangle ^{i+1}_g$. Define $\tilde X$ to be the union of  the simplicial mapping cylinders
for each $\tilde \phi_r$, then $\tilde X$ has the same $\N^r$-graded $i$-th homology 
as $X$.
\end{proof}

A more general statement than the one in Theorem \ref{T:pers mod hom} is true:

\begin{thm}
If in the setting of Theorem \ref{T:pers mod hom} one more generally assumes that the module $M$ is tame, in the sense that each homogeneous part is finite-dimensional as a $\mathbb K$-module, 
then there exists a multifiltered simplicial complex $X$ such that $M$ is the simplicial homology in degree $i>0$ of $X$.
\end{thm}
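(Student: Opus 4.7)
The plan is to adapt the proof of Theorem~\ref{T:pers mod hom} by replacing its finite generating sets with ones that are only \emph{locally finite} in the multigraded sense, which is what the tameness hypothesis buys us in place of Hilbert's Syzygy Theorem.

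First, I would choose a homogeneous generating set $\mathcal{G}$ for $M$ whose restriction to each multidegree $\u \in \N^r$ is finite. Tameness says $\dim_\KK M_\u < \infty$, so the quotient
\[
M_\u \Big/ \sum_{i=1}^r x_i M_{\u - \mathbf{e}_i}
\]
is finite-dimensional for each $\u$, and lifting a basis of this quotient produces a minimal generating set with finitely many elements in each degree. The free $S$-module $F = \bigoplus_{g \in \mathcal{G}} S(-\deg g)$ has finite-dimensional graded pieces $F_\u$, since only finitely many $\v \pred \u$ exist and each contributes finitely many generators. Consequently the kernel $K$ of $F \twoheadrightarrow M$ is again tame, and by the same procedure admits a locally finite homogeneous generating set $\mathcal{R}$.

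Second, I would run the cell complex construction of Theorem~\ref{T:pers mod hom} verbatim, but now with possibly infinitely many $i$-cells $e^i_g$ (one for each $g \in \mathcal{G}$) and $(i+1)$-cells $e^{i+1}_r$ (one for each $r \in \mathcal{R}$), using the same integer-valued coefficients $c(r,g)$ (clearing denominators relation by relation in the $\KK=\Q$ case). The resulting cell complex $X$ carries the natural multifiltration
\[
X_\v = \bigcup_{\deg(g) \pred \v} e^i_g \;\cup\; \bigcup_{\deg(r) \pred \v} e^{i+1}_r,
\]
and by local finiteness of $\mathcal{G}$ and $\mathcal{R}$ each $X_\v$ is a \emph{finite} CW complex. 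The cellular chain calculation is literally the same as before and identifies the $i$-th $\N^r$-graded cellular homology of $X$ with $M$.

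Third, I would replace $X$ by a homotopy equivalent simplicial complex $\widetilde{X}$ by an inductive simplicial approximation performed compatibly with the filtration. The main obstacle is that the argument in Theorem~\ref{T:pers mod hom} applies the simplicial approximation theorem once to a finite complex, whereas here we must fit infinitely many approximations together consistently: when we enlarge $\v$, the simplicial structure chosen on $X_\v$ must be preserved inside the larger $X_{\v'}$. I would handle this by fixing a total order refining $\pred$ and processing cells one multidegree at a time; at each step only finitely many new cells are added, so one can subdivide the boundaries of just these new $(i+1)$-simplices $\partial\triangle^{i+1}_r$ finely enough to approximate the relevant $\phi_r$ by a simplicial map $\tilde{\phi}_r$, without touching the triangulations of cells attached at earlier stages. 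Taking the union of the resulting simplicial mapping cylinders gives the desired $\widetilde{X}$ with the same multigraded $i$-th homology as $X$, namely $M$.
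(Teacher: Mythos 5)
Your proof is correct and is essentially the same as the paper's: both exhaust the construction by finite pieces (the paper via an increasing family of finitely generated submodules $M_k$, spanned by generators of degree $\preccurlyeq (k,\dots,k)$, and a colimit of the finite simplicial complexes $\widetilde{X}^k$ that the construction of Theorem~\ref{T:pers mod hom} produces for them, invoking the fact that homology commutes with directed colimits; you via a locally finite presentation and a degree-by-degree simplicial approximation of the single locally finite cell complex $X$). You are somewhat more explicit than the paper about two points it leaves implicit, namely why tameness yields a locally finite generating set (and hence why the $M_k$ are finitely generated) and why the simplicial approximations of the attaching maps can be chosen compatibly as the multidegree grows.
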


\begin{proof}
For  $k=0,1,2,\dots$ denote by  $M_k$ the submodule of $M$ generated by the generators of $M$ 
that have degree smaller or equal to $(k,\dots , k)$. We have that $M_k$ is finitely generated
 as a $\KK[x_1,\dots , x_r]$-module. 
Furthermore, for all $k\leq k'$ we have that $M_k$ is a submodule of $M_{k'}$, and $M$ is the colimit of the direct system given by the modules $M_k$ together with the inclusion maps. Given an inclusion $M_k\hookrightarrow M_{k'}$ we choose a set of generators $g^k_1,\dots , g^k_m$ and relations $r^k_1,\dots , r^k_n$ for $M_k$. Then, given a set of generators for $M_{k'}$, we enlarge  it so that it contains the generators of $M_k$. Similarly, we enlarge the set of relations for $M_{k'}$ to count for the additional generators, and further so that it contains all relations $r^k_1,\dots , r^k_n$.  Since $M_k$ is finitely generated, we can follow the construction in the proof of Theorem \ref{T:pers mod hom} and consider the CW complex $X^k$ 
with multifiltration
$$
X^k_\v = \bigcup _{\deg(g) \pred \bold v} e^i_{g} \cup \bigcup_{\deg(r) \pred 
\bold v} e^{i+1} _r
$$ 
such that its cellular homology in degree $i$ is $M_k$, and similarly we consider a CW complex $X^{k'}$ such that its homology in degree $i$ is $M_{k'}$. By construction we  have that $X^k$ is contained in $X^{k'}$, and the inclusion map induces the inclusion $M_k\hookrightarrow M_{k'}$. Denote by $\widetilde{X}^k$ the simplicial complex  homotopy equivalent to the cell complex $X^k$, and let $\widetilde{X}$ be the colimit of the direct system given by the multifiltered simplicial complexes $\widetilde{X}^k$ together with the inclusion maps. Then since homology commutes with colimits we have that $M$ is the homology in degree $i$ of $X$.

\end{proof}

\begin{rmk}
We note that the assumption that $i>0$ is necessary. For,  already in the ungraded case, that is in ordinary homology, there is a restriction in that no torsion can occur in zero dimensional homology. This is because the only degrees that can occur
when attaching a 1-cell to a 0-cell are $-1, 0$, or $ 1$.  
\end{rmk}

\begin{rmk}
The reader may wonder whether the theorem is true for all fields $\mathbb K$. It is clearly true for non-graded modules as a 
vector spaces of dimension $n$ is the $i$-th (reduced) homology of 
a wedge of $n$ spheres of dimension $i$ for all $i\geq 0$.
It also holds  for $r=1$ as in that case $S= \mathbb \KK[x]$ is a principal ideal domain, and  
generators and relations can be found such that  every relation is a multiple of one generator: $r = c(r,g) x^ {\deg(r)-\deg(g)} g$, and on replacing $g$ by $g':= c(r,g) g$, we have that  the attaching degree is 
$ 1$. 
However, when $r>1$, several non-zero coefficients may occur in front of $g$ which cannot be converted simultaneously into integers by changing the generators. 
\end{rmk}

In light of Theorem \ref{T:pers mod hom} we will study general multigraded $S$-modules.

\section{The rank of a module and its Hilbert function}\label{S:hilbert series}

A free resolution of a module is not an invariant, as a module in general has many non-isomorphic free resolutions. In this section we discuss  invariants associated to the free resolution of a module, namely the Hilbert series, the Hilbert function and the Hilbert polynomial, and how one can read off the rank of a module from these invariants.
The rank of a module can in a certain sense be seen as the most coarse invariant of a module; it is the number of minimal generators for a maximal free submodule of the module.  In the one-parameter case this corresponds to the number of infinite intervals in the barcode.

\begin{defn}\label{rankDef}\cite[p.\ 261]{E}
For a module $M$ defined over an integral domain $R$ with field of  fractions $Q$, the \define{rank} of $M$ is 
\[
\rk_{R}(M)=\dim_Q M\otimes_R Q \, .
\]
\end{defn} 
In particular, the rank of a free $R$-module $R^n$ is $n$.

While Definition~\ref{rankDef} is mathematically precise, it does not
give any insight of how to actually compute $\rk_R(M)$, which is
important for applications.
When the integral domain $R$ is the polynomial ring $S=\KK[x_1,\dots ,x_r]$, and the module $M$ is $\mathbb{N}^r$-graded and finitely generated,  we know by Theorem \ref{T:pers mod hom} that there exists a finite multifiltered simplicial complex $K$ such that 
$M\cong H_i(K)$. Since the multifiltered complex $K$ stabilizes, 
 say
in degree ${\bf s}$, $\rk_R(H_i(K))$ will equal the rank of the
ordinary simplicial homology of the complex $K_{{\bf s}}$. 
In the following we introduce the Hilbert function, and make the previous statement precise. We then briefly discuss the Hilbert series, which gives a compact way to encode the information of the Hilbert function, as well as the Hilbert polynomial, which in the one-parameter case encodes the information of the Hilbert function for high enough degrees. Finally, we conclude the section with a series of examples.

\subsection{The Hilbert function}

The Hilbert function of a module $M$ encodes the dimensions of all vector spaces $M_{\bf u}$:

\begin{defn} Let $M$ be an $\N^r$-graded module over $S=\KK[x_1,\dots , x_r]$. The  \define{Hilbert function} of $M$ is  the function $HF(M,{\bf u}) = \dim_{\KK}M_{\bf u}$. \end{defn}

An easy way to visualize the Hilbert function is as an $r$-dimensional array, with the ${\bf u}=(u_1,\ldots,u_r)$ entry equal to the dimension of $M_{{\bf u}}$.
Hence the Hilbert function of $S$ is an $r$-dimensional array with a one in 
every position which has all indices non-negative, and zeroes elsewhere, and $S(-{\bf u})$
is the same array, but with the origin translated to position ${\bf u}$.

\begin{lem}\label{L:HF add}
For a finitely generated graded $S=\KK[x_1,\ldots,x_r]$-module $M$
with free resolution $F_\bullet$ as in Definition~\ref{FreeRes}, 
\begin{enumerate}[label=(\roman*{})]
\item  \label{item:HF} $HF(M, {\bf u}) = \sum\limits_{i \ge 0} (-1)^i HF(F_i,{\bf u})$
\item \label{item:rk} $\rk_S(M) = \sum\limits_{i \ge 0} (-1)^i \rk_S(F_i)$.
\end{enumerate}
\end{lem}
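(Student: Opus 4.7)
The plan is to deduce both identities from exactness of the resolution combined with the standard fact that the alternating sum of dimensions vanishes in any finite exact sequence of finite-dimensional vector spaces. The two parts come from two different ``evaluation'' functors applied to $F_\bullet \to M \to 0$: taking the $\bold u$-th graded component for (i), and tensoring with the fraction field for (ii).

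For part \ref{item:HF}, I would start by recording that the free resolution
\[
0 \to F_n \to F_{n-1} \to \cdots \to F_0 \to M \to 0
\]
is a finite exact sequence of $\N^r$-graded modules (finite length by Theorem~\ref{syzygy thm}, and each $F_i$ finitely generated). Since every differential is a homomorphism of graded modules, the functor $(-)_{\bold u}$ that extracts the $\bold u$-th graded piece is exact on this sequence, producing a finite exact sequence of $\KK$-vector spaces
\[
0 \to (F_n)_{\bold u} \to \cdots \to (F_0)_{\bold u} \to M_{\bold u} \to 0.
\]
Each $(F_i)_{\bold u}$ is finite-dimensional because $F_i$ is a finite direct sum of shifts $S(-\v)$ and $S(-\v)_{\bold u} = S_{\bold u - \v}$ is at most one-dimensional. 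The standard alternating-sum identity for a bounded exact sequence of finite-dimensional vector spaces then yields \ref{item:HF} directly.

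For part \ref{item:rk}, the natural move is to tensor the resolution over $S$ with the fraction field $Q = \KK(x_1,\dots,x_r)$. Because $Q$ is a localization of $S$ it is flat, so the sequence
\[
0 \to F_n \otimes_S Q \to \cdots \to F_0 \otimes_S Q \to M \otimes_S Q \to 0
\]
remains exact. Each term is a finite-dimensional $Q$-vector space, with $\dim_Q(F_i \otimes_S Q) = \rk_S(F_i)$ trivially and $\dim_Q(M \otimes_S Q) = \rk_S(M)$ by Definition~\ref{rankDef}. Another application of the alternating-sum identity gives \ref{item:rk}.

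There is no real obstacle: once Hilbert's Syzygy Theorem provides the finiteness of the resolution and finite generation of the $F_i$, both steps reduce to the elementary linear-algebra fact about alternating sums in exact sequences. The only thing to verify carefully is that both $(-)_{\bold u}$ and $-\otimes_S Q$ preserve exactness, which is immediate for the first (direct summand of the grading decomposition) and standard for the second (flatness of localization).
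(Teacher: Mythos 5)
Your proof is correct and follows essentially the same route as the paper: extract the degree-$\bold u$ graded piece for part (i), tensor with the fraction field (flat by localization) for part (ii), and in each case apply the alternating-sum identity for a bounded exact sequence of finite-dimensional vector spaces.
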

\begin{proof}

 For any degree ${\bf u}$, the degree ${\bf u}$ components of $F_\bullet$ are an exact sequence of vector spaces over $\KK$. The equality in \ref{item:HF} now follows from the Rank-Nullity Theorem from linear algebra by a  standard argument for proving Euler characteristic formulas. 
 
 Let $Q$ be the field of fraction of $S$.
Applying $\_ \otimes_S Q$ is the same as localizing at the zero ideal. Since localization preserves exactness, the equality in \ref{item:rk} now follows similarly as for \ref{item:HF} from
an Euler characteristic argument.
\end{proof}

\begin{lem}\label{L:rankHF}
For a finitely generated $\N^r$-graded $S$-module $M$, 
\[
\rk_S(M) = HF(M, {\bf u}) \mbox{ for } {\bf u}\gg {\bf 0} \, .
\]
\end{lem}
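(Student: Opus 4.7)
The plan is to combine the previous lemma with Hilbert's Syzygy Theorem (Theorem \ref{syzygy thm}), reducing the statement to the obvious fact that the Hilbert function of a shifted free module $S(-\v)$ becomes constant (equal to $1$) once the multidegree exceeds $\v$.

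First, I would fix a finite $\N^r$-graded free resolution
\[
0 \longrightarrow F_n \longrightarrow F_{n-1} \longrightarrow \cdots \longrightarrow F_0 \longrightarrow M \longrightarrow 0
\]
of $M$, which exists by Theorem \ref{syzygy thm} with each $F_i$ finitely generated. Writing $F_i = \bigoplus_{j=1}^{b_i} S(-\u_{ij})$ for some finite set of shifts $\u_{ij} \in \N^r$, I observe directly from the definition of $S(-\v)$ that
\[
HF(S(-\v),\u) = \begin{cases} 1 & \text{if } \v \pred \u, \\ 0 & \text{otherwise.} \end{cases}
\]
Summing over the direct summands of $F_i$ gives $HF(F_i,\u) = \#\{j : \u_{ij} \pred \u\}$, and hence $HF(F_i,\u) = b_i = \rk_S(F_i)$ as soon as $\u$ is coordinatewise larger than every shift $\u_{ij}$ appearing in the resolution.

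Next, let $\u^* \in \N^r$ denote the componentwise maximum of the finite set $\{\u_{ij}\}$ over all $i,j$. For any $\u \succcurlyeq \u^*$ the computation above shows that $HF(F_i,\u) = \rk_S(F_i)$ simultaneously for every $i$, so by Lemma \ref{L:HF add}\ref{item:HF},
\[
HF(M,\u) = \sum_{i \ge 0} (-1)^i HF(F_i,\u) = \sum_{i \ge 0} (-1)^i \rk_S(F_i),
\]
and the right-hand side equals $\rk_S(M)$ by Lemma \ref{L:HF add}\ref{item:rk}. Thus $HF(M,\u)=\rk_S(M)$ whenever $\u \succcurlyeq \u^*$, i.e.\ for $\u \gg \mathbf{0}$.

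There is no real obstacle here beyond pinning down what ``$\u \gg \mathbf{0}$'' must mean: the argument is a bookkeeping consequence of the previous lemma together with finiteness of the resolution. The only subtle point is that, unlike the one-parameter case, ``sufficiently large'' has to be interpreted coordinatewise (i.e.\ $\u$ lies in a translate of the positive orthant), since a single summand $S(-\v)$ contributes zero outside the cone $\v + \N^r$.
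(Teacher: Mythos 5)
Your proof is correct and follows essentially the same route as the paper's: both observe that $HF(S(-\v),\u)=1$ once $\u\succcurlyeq\v$, conclude $HF(F_i,\u)=\rk_S(F_i)$ for $\u$ coordinatewise past all the shifts, and then invoke Lemma~\ref{L:HF add} (both parts) applied to a finite free resolution from the Syzygy Theorem. The only difference is that you make the threshold $\u^*$ explicit, which is just a clarification of what ``$\u\gg\mathbf{0}$'' means; there is no substantive divergence.
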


\begin{proof} 
A finitely generated, $\N^r$-graded free module $F$ can be written as a finite sum
\[
F = \bigoplus_{j=1}^n S(-{\bf u}_j), \mbox{ with } {\bf u}_j \in \N^r.
\]
The multigraded module $S$ is one-dimensional in each
multidegree. Therefore $HF(S(-{\bf u}_j), {\bf v}) =1$ whenever ${\bf v} \ge {\bf u}_j$. So when ${\bf v} \ge {\bf u}_j$ for $j = 1, \dots, n$,
\[
HF(F,{\bf v}) = n= \rk_S(F) .
\]

We can now apply Lemma~\ref{L:HF add} to prove the statement for any finitely generated $S$-module $M$. Let $F_\bullet$ be a free resolution of $M$ as in Definition~\ref{FreeRes}. Then, for large enough ${\bf v}$, 
\[
HF(M, {\bf v}) = \sum\limits_{i \ge 0} (-1)^i HF(F_i,{\bf v}) \,
= \sum \limits_{i\ge 0} (-1) ^i \rk _S(F_i)=\rk_S(M),
\] 
where the first equality holds by item \ref{item:HF} in Lemma~\ref{L:HF add}, and the third holds by item \ref{item:rk} in that lemma.  
\end{proof}

We thus have that $\rk_S(M)=\dim_\KK(M)_{\bf u}$ for high enough degree ${\bf u}$. In particular, by Theorem \ref{T:pers mod hom} we know that for any $i>0$ there exists a multifiltered simplicial complex $K$ that stabilizes at some degree ${\bf s}$ and such that $M\cong H_i(K)$; hence we have $\rk_S(M)=\dim_\KK(H_i(K_{\bf s}))$. 
One  can  thus reduce the computation of  the rank of a multigraded persistence  module of a finite multifiltered simplicial complex $K$ to the computation  of the simplicial homology of the (unfiltered)  complex $K$.

\subsection{The Hilbert polynomial}
In the one-parameter case, or more generally when all variables have  degree  $1$, one has that in degree high enough the Hilbert function equals a polynomial:

\begin{lem}\cite[Corollary 1.3]{E2}\label{L:Hil P}
Let $M$ be a finitely generated $\N$-graded module over $S=\KK[x_1,\dots , x_r]$, with free resolution $F_\bullet$ as in Definition \ref{FreeRes}, where each module $F_i$ is $\N$-graded. Then if we write $F_i=\oplus_j S(- a_{ij})$, we have that  there exists a polynomial $HP(M,u)$ of degree at most $r-1$ in the variable $u$ such that $HP(M,u)=HF(M,u)$ for all  $u\geq \max_j\{a_{ij}\} -( r-1)$.
\end{lem}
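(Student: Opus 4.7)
The plan is to build the Hilbert polynomial out of the shifted free modules appearing in the resolution, using the additivity formula from Lemma~\ref{L:HF add}. The starting point is the well-known closed form for the Hilbert function of $S$ itself: since $S_u$ has a basis of monomials of total degree $u$,
\[
HF(S,u) = \binom{u+r-1}{r-1} \quad \text{for } u \ge 0,
\]
and both sides vanish on $\{-1,-2,\dots,-(r-1)\}$. Viewed as a polynomial in $u$, the right-hand side has degree exactly $r-1$, and it agrees with $HF(S,\cdot)$ for all $u \ge -(r-1)$. By shifting, $HF(S(-a),u) = HF(S, u-a)$ agrees with the polynomial $\binom{u-a+r-1}{r-1}$ for all $u \ge a-(r-1)$.

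Next, I would invoke the fact that $M$ has a finite free $\N$-graded resolution; although the excerpt proves the Syzygy Theorem for $\N^r$-gradings, the standard $\N$-grading version with resolution length at most $r$ is the relevant specialization, which may be assumed. Writing $F_i = \bigoplus_j S(-a_{ij})$ and applying Lemma~\ref{L:HF add}\ref{item:HF} gives
\[
HF(M,u) \;=\; \sum_{i\ge 0}(-1)^i \sum_j HF\bigl(S(-a_{ij}),u\bigr).
\]
Define
\[
HP(M,u) \;:=\; \sum_{i\ge 0}(-1)^i \sum_j \binom{u - a_{ij}+r-1}{r-1}.
\]
This is a finite $\Q$-linear combination of polynomials of degree $r-1$ in $u$, hence itself a polynomial of degree at most $r-1$.

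Finally, to pin down the range of validity, I would observe that each summand $\binom{u-a_{ij}+r-1}{r-1}$ equals $HF(S(-a_{ij}),u)$ precisely when $u \ge a_{ij}-(r-1)$. Taking the maximum over all $i,j$, we get that $HP(M,u) = HF(M,u)$ for every $u \ge \max_{i,j}\{a_{ij}\} - (r-1)$, as required. The only mildly delicate point is verifying that the individual polynomial identities assemble correctly into the claimed bound; this is essentially bookkeeping once one notes that the exactness of $F_\bullet$ in each degree (used in Lemma~\ref{L:HF add}) lets us replace the alternating sum of Hilbert functions by the alternating sum of polynomials in the common range.
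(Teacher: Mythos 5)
Your proof is correct and matches the standard argument from the cited reference (Eisenbud, \emph{The Geometry of Syzygies}, Corollary 1.3): express $HF(S,u)=\binom{u+r-1}{r-1}$, observe that this degree-$(r-1)$ polynomial agrees with $HF(S,\cdot)$ for all $u \ge -(r-1)$, shift by $a_{ij}$, and apply the additivity of Hilbert functions over a finite free $\N$-graded resolution. The paper itself states this lemma by citation without reproducing a proof, so there is no in-paper argument to compare against, and your derivation is exactly the one the source gives.
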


\begin{defn}\label{D:HP}
The polynomial $HP(M,t)$ is called \define{Hilbert polynomial} of $M$. 
\end{defn}

Given an $\N^r$-graded module $M$ we can make it into an $\N$-graded module ${\bf M}$ by setting the degree of every variable to  $1$,  and we can then consider the Hilbert polynomial of this module.
The Hilbert polynomial of a module has degree $\le r-1$, and
 if $M$ is a finitely generated $\N$-graded $\KK[x]$-module , then \cite{S}
\[
HP(M,t) = \frac{rk(M)t^{r-1}}{(r-1)!}+\mbox{terms of lower degree.}
\]
Similarly, the rank of an $\N^r$-graded module $M$ is  obtained from the leading coefficient of the Hilbert polynomial of ${\bf M}$, see \cite[Page 41]{S}.

\subsection{The Hilbert series}\label{SS:HS}
More generally, a compact way to encode the information of the Hilbert function is given by a power series:

 \begin{defn}
 Let $M$ be a finitely generated $\N^r$-graded module over $S=\KK[x_1,\dots , x_r]$.
 The \define{multigraded Hilbert series} of $M$ is the formal power series in $\Z[[t_1,\dots , t_r]]$ defined as follows:
 \[
 HS(M,{\bf t}) = \sum_{{\bf u} \in \N^r}HF(M,\u){\bf t}^{\bf u} \, .
 \]
\end{defn}

An easy induction shows that 

\begin{equation}\label{HSeqn1}
HS(S(-{\bf u}),{\bf t})=\frac{{\bf t}^{\bf u}}{\prod_{i=1}^r(1-t_i)} \,.
\end{equation} 

By the Hilbert Syzygy 
Theorem  a finitely generated, $\N^r$-graded $S$-module $M$ has 
a free resolution of length at most $r$, and all the free modules $F_i$ which 
appear in the resolution are graded and of finite rank. Since the Hilbert series is additive \cite[Exercise 19.15]{E}, one can compute the multigraded Hilbert
series of $M$ from a free resolution by an Euler characteristic argument. We obtain:
\begin{equation}\label{HSeqn4}
HS(M,{\bf t}) = \sum\limits_{i=0}^r (-1)^i HS(F_i,{\bf t})\,.
\end{equation}

For a finitely generated, multigraded $S$-module $M$, it follows from the
Hilbert Syzygy Theorem and Equations~(\ref{HSeqn1})--(\ref{HSeqn4}) that the multigraded
Hilbert series of $M$ is a rational polynomial of the form
\begin{equation}\label{HSeries}
HS(M,{\bf t}) = \frac{P(t_1,\ldots,t_r)}{\prod_{i=1}^r(1-t_i)}\,.
\end{equation}
The polynomial $P(t_1,\ldots,t_r)$ in Equation \eqref{HSeries} is an invariant of the module, and one can read off the rank of the module from it:

\begin{lem}\label{rankFreeRes}
For a finitely generated graded $S=\KK[x_1,\ldots,x_r]$ module $M$
with free resolution $F_\bullet$ as in Definition~\ref{FreeRes}, 
 $\rk_S(M)$ is equal to $P({\bf 1})$, where $P$ is the numerator of 
the Hilbert series appearing in Equation~\ref{HSeries}.
\end{lem}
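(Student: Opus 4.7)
The plan is to compute $P(\mathbf{t})$ directly from a finite graded free resolution of $M$ and then plug in $\mathbf{t} = \mathbf{1}$. First, I would invoke the Hilbert Syzygy Theorem (Theorem~\ref{syzygy thm}) to pick such a resolution $F_\bullet$ of length at most $r$, writing each free module as
\[
F_i \;=\; \bigoplus_{j=1}^{n_i} S(-\mathbf{u}_{ij}),
\]
so that $\rk_S(F_i) = n_i$. Applying Equation~\eqref{HSeqn1} to each summand and combining with the additivity formula from Equation~\eqref{HSeqn4} yields
\[
HS(M,\mathbf{t}) \;=\; \frac{\sum_{i=0}^r (-1)^i \sum_{j=1}^{n_i} \mathbf{t}^{\mathbf{u}_{ij}}}{\prod_{k=1}^r (1-t_k)},
\]
which is exactly the form of Equation~\eqref{HSeries}. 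I therefore identify $P(\mathbf{t}) = \sum_{i=0}^r (-1)^i \sum_{j=1}^{n_i}\mathbf{t}^{\mathbf{u}_{ij}}$.

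Second, I evaluate at $\mathbf{t}=\mathbf{1}$. Since $\mathbf{1}^{\mathbf{u}_{ij}} = 1$ for every $\mathbf{u}_{ij} \in \N^r$, the inner sum collapses to $n_i$, giving
\[
P(\mathbf{1}) \;=\; \sum_{i=0}^r (-1)^i n_i \;=\; \sum_{i=0}^r (-1)^i \rk_S(F_i),
\]
which equals $\rk_S(M)$ by Lemma~\ref{L:HF add}\ref{item:rk}.

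The only subtle point to be careful about is that $P$ must be taken as the numerator corresponding to the full denominator $\prod_{k=1}^r(1-t_k)$, not a reduced form: for instance, if $M = S/(x_1)$ with $r=2$, then the unreduced numerator $1 - t_1$ correctly vanishes at $\mathbf{1}$ to give $\rk_S(M) = 0$, whereas cancelling the factor $(1-t_1)$ against the denominator would leave a numerator equal to $1$. Since each $1-t_k$ is a unit in $\mathbb{Z}[[t_1,\ldots,t_r]]$, the rational expression in Equation~\eqref{HSeries} determines $P$ uniquely as a power series, and the computation above shows this $P$ is in fact a polynomial whose value at $\mathbf{1}$ equals $\rk_S(M)$. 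There is no substantive technical obstacle here: the proof reduces to the same Euler characteristic bookkeeping already used for Lemma~\ref{L:HF add}.
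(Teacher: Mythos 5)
Your proof is correct and follows essentially the same route as the paper's: establish the claim for free modules via Equation~\eqref{HSeqn1}, then combine additivity of the Hilbert series along a finite free resolution (Equation~\eqref{HSeqn4}) with Lemma~\ref{L:HF add}\ref{item:rk}. Your write-up is simply more explicit, and the closing observation that $P$ must be the numerator over the full denominator $\prod_{k=1}^r(1-t_k)$ (not a reduced form) is a useful clarification of a point the paper leaves implicit, not a different argument.
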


\begin{proof}
 To see that $\rk_S(M)$ is the numerator of $HS(M,{\bf t})$ evaluated at ${\bf 1}$, note that it holds for free modules by Equation~\eqref{HSeqn1}. Now apply Lemma \ref{L:HF add}\ref{item:rk}. 
\end{proof}

It is possible to read off the  rank of an $\N^r$-graded module over $S$ from a certain coefficient of the Hilbert series.
To keep notation simple we assume for the remainder of this section  that $r=2$. 
The free resolution of a module $M$ over $\KK [x_1, x_2]$ has the following form
\begin{equation}\label{E:free res bigraded}
0 \rightarrow \bigoplus\limits_{k=1}^F S(-(e_k,f_k))
\rightarrow \bigoplus\limits_{j=1}^E S(-(c_j,d_j))
\rightarrow \bigoplus\limits_{i=1}^V S(-(a_i,b_i))
\rightarrow M \rightarrow 0 \, ,
\end{equation} 
where $a_i,b_i,c_j,d_j,e_k,f_k$ are arbitrary non-negative integers.

For a resolution as in Equation \ref{E:free res bigraded}, we have $\rk(M) = V-E+F$. Let 
\[
\begin{array}{ccc}
m_1 & = & \max\{a_i,c_j,e_k\} \\
m_2 & = & \max\{b_i,d_j,f_k\}
\end{array}
\]
and for $l$ any non-negative integer, let
\[
\begin{array}{ccc}
\Gamma_l & = & \{a_i,c_j,e_k \le l\} \\
\Gamma'_l & = & \{b_i,d_j,f_k \le l\} \\
\alpha_l & = & |\{a_i \in \Gamma_l\}|-|\{c_j \in \Gamma_l\}|+ |\{e_k \in \Gamma_l\}|\\
\beta_l & = & |\{b_i \in \Gamma_l\}|-|\{d_j \in \Gamma_l\}|+ |\{f_k \in \Gamma_l\}|\,.
\end{array}
\]
Then we have 
\begin{equation}\label{HS decomposition}
HS(M,{\bf t})=
C\frac{t_1^{m_1}t_2^{m_2}}{(1-t_1)(1-t_2)} 
+ \sum\limits_{i=0}^{m_1-1} \frac{ \alpha_it_1^i}{(1-t_2)}
+ \sum\limits_{j=0}^{m_2-1} \frac{ \beta_j t_2^j}{(1-t_1)}
+ R(t_1,t_2)\,,
\end{equation}
where $C$ is a non-negative integer and $R(t_1,t_2)$ is a polynomial. It follows immediately from Lemma~\ref{L:HF add}\ref{item:rk} that

\begin{prop}\label{HStheorem}
For $C$ as in Equation \ref{HS decomposition} we have $C=\rk_S(M)$.
\end{prop}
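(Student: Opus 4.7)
The plan is to reduce the proposition to Lemma \ref{rankFreeRes}, which already tells us that $\rk_S(M)$ equals the numerator of the Hilbert series (over the denominator $(1-t_1)(1-t_2)$) evaluated at $(1,1)$. The only work left is to show that, after clearing denominators in the specific decomposition of Equation (HS decomposition), the coefficient $C$ survives the evaluation while all other terms vanish.

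Concretely, I would first multiply both sides of Equation (HS decomposition) through by $(1-t_1)(1-t_2)$ to obtain the unique numerator $P(t_1,t_2)$ with
\[
P(t_1,t_2) = C\, t_1^{m_1} t_2^{m_2} + (1-t_1)\sum_{i=0}^{m_1-1}\alpha_i t_1^i + (1-t_2)\sum_{j=0}^{m_2-1}\beta_j t_2^j + (1-t_1)(1-t_2)\, R(t_1,t_2).
\]
Next, I would evaluate at $(t_1,t_2)=(1,1)$. The middle two families of terms carry an explicit factor of $(1-t_1)$ or $(1-t_2)$, and the last term carries both; each vanishes at $(1,1)$, leaving $P(1,1)=C$. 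Finally, Lemma \ref{rankFreeRes} gives $\rk_S(M) = P(1,1) = C$, which is the desired conclusion.

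There is essentially no obstacle here: the claim is a one-line consequence of Lemma \ref{rankFreeRes} together with the observation that all correction terms in the decomposition are divisible by $(1-t_1)$ or $(1-t_2)$. The slightly subtle point worth flagging is that the decomposition itself is not unique as a statement about rational functions until one fixes the shape of $R$, but this ambiguity never affects $C$, because $C$ is pinned down as the value at $(1,1)$ of a uniquely determined numerator $P$. One could alternatively bypass Lemma \ref{rankFreeRes} and argue directly via the Euler characteristic formula $\rk_S(M)=V-E+F$ of Lemma~\ref{L:HF add}\ref{item:rk}, combined with the same observation that only the $C\,t_1^{m_1}t_2^{m_2}$ term contributes at $(1,1)$, but invoking Lemma \ref{rankFreeRes} is the cleanest route.
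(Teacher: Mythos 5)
Your proof is correct and follows essentially the same route as the paper, which declares the proposition to be an immediate consequence of Lemma~\ref{L:HF add}\ref{item:rk}. You make the ``immediate'' step explicit: clearing denominators in Equation~\eqref{HS decomposition} and evaluating the numerator at $(1,1)$ kills every correction term because each carries an explicit factor of $(1-t_1)$ or $(1-t_2)$, leaving $P(1,1)=C$, which by Lemma~\ref{rankFreeRes} equals $\rk_S(M)$. Since Lemma~\ref{rankFreeRes} is itself proved via Lemma~\ref{L:HF add}\ref{item:rk}, your detour through $P(1,1)$ and the paper's direct citation amount to the same Euler-characteristic argument, and your remark about the alternative direct appeal to $\rk_S(M)=V-E+F$ shows you see this equivalence clearly.
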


Note that Equation \ref{HS decomposition} and Proposition \ref{HStheorem} can be generalized to persistence modules for an  arbitrary number of parameters. In particular, if $M$ is a  one-parameter persistence module, we obtain the following decomposition:
\begin{equation}\label{E:dec 1-d}
HS(M,{ t})= \rk_S(M)\frac{t^m}{1-t} + R(t)
\end{equation}
\noindent
where $\rk(M)$ is the number of infinite intervals, and $m$ is the maximum of the $\alpha_i$ from Equation \eqref{E:decomposition one-p}.

When we have more than one parameter, we also have terms in the decomposition for which the denominator is $(1-t_{i_1})\dots (1-t_{i_s})$ where $1\leq s < r$. At first sight it might be tempting to interpret the coefficients of such terms as giving information about generators that do not vanish on the coordinate subspace of $\N^r$ spanned by $x_{i_1},\dots , x_{i_s}$. However, this is not correct, as Example \ref{multiHS} shows. It is however possible to extract such refined information from persistence modules as we will explain; for this we will need to introduce associated primes (see Section \ref{S: ass prim}).

\subsection{Examples}\label{examplesSection} We illustrate our work with several examples. We provide
 {\tt Macaulay2} code that we used to do these computations  at \url{https://github.com/n-otter/MPH}.

\begin{exm}\label{FirstEx}

In \cite{CSZ}, Carlsson, Singh and Zomorodian analyze the simplicial homology of the one-critical bifiltration in
Fig.~\ref{fig:Bifilt}.
\begin{figure}[htb]

\begin{tikzpicture}
\node at (0,0){$
\begin{tikzpicture}
\node at (0.5,-0.6) {$(0,0)$};
\node at (-0.2,0) {$e$};
\node at (-0.2,0.5) {$f$};
\node at (0.8,0) {$c$};
\node at (0.8,0.5) {$b$};
\foreach \x in {0,1} do
\foreach \y in {0,0.5} do
\node at (\x,\y) {$\bullet$};
\end{tikzpicture}$};

\node at (2.5,0){$
\begin{tikzpicture}
\node at (0.5,-0.6) {$(1,0)$};
\node at (0.5,0) {$d$};
\draw[-] (0,0)--(0.5,-0.3)--(1,0);
\foreach \x in {0,1} do
\foreach \y in {0,0.5} do
\node at (\x,\y) {$\bullet$};
\node at (0.5,-0.3) {$\bullet$};
\end{tikzpicture}
$};
\node at (5,0){$
\begin{tikzpicture}
\node at (0.5,-0.6) {$(2,0)$};
\draw[-] (0,0)--(0.5,-0.3)--(1,0);
\foreach \x in {0,1} do
\foreach \y in {0,0.5} do
\node at (\x,\y) {$\bullet$};
\node at (0.5,-0.3) {$\bullet$};
\draw[-] (0,0)--(0,0.5);
\end{tikzpicture}
$};
\node at (7.5,0){$
\begin{tikzpicture}
\node at (0.5,-0.6) {$(3,0)$};
\draw[-] (0,0)--(0.5,-0.3)--(1,0);
\foreach \x in {0,1} do
\foreach \y in {0,0.5} do
\node at (\x,\y) {$\bullet$};
\node at (0.5,-0.3) {$\bullet$};
\draw[-] (0,0)--(0,0.5);
\end{tikzpicture}
$};

\node at (0,2.5){$
\begin{tikzpicture}
\node at (0.5,-0.6) {$(0,1)$};
\draw[-] (0,0)--(1,0);
\foreach \x in {0,1} do
\foreach \y in {0,0.5} do
\node at (\x,\y) {$\bullet$};
\end{tikzpicture}$};

\node at (2.5,2.5){$
\begin{tikzpicture}
\node at (0.5,-0.6) {$(1,1)$};
\node at (0.5,1) {$a$};
\draw[-] (0,0)--(1,0)--(0.5,-0.3)--(0,0);
\foreach \x in {0,1} do
\foreach \y in {0,0.5} do
\node at (\x,\y) {$\bullet$};
\node at (0.5,-0.3) {$\bullet$};
\node at (0.5,0.8) {$\bullet$};
\end{tikzpicture}
$};
\node at (5,2.5){$
\begin{tikzpicture}
\node at (0.5,-0.6) {$(2,1)$};
\draw[-] (0,0)--(1,0)--(0.5,-0.3)--(0,0);
\foreach \x in {0,1} do
\foreach \y in {0,0.5} do
\node at (\x,\y) {$\bullet$};
\node at (0.5,-0.3) {$\bullet$};
\node at (0.5,0.8) {$\bullet$};
\draw[-] (0,0)--(0,0.5);
\end{tikzpicture}$};

\node at (7.5,2.5){$
\begin{tikzpicture}
\node at (0.5,-0.6) {$(3,1)$};
\fill[cyan](0,0)--(1,0)--(0.5,-0.3)--(0,0);
\draw[-] (0,0)--(1,0)--(0.5,-0.3)--(0,0);
\foreach \x in {0,1} do
\foreach \y in {0,0.5} do
\node at (\x,\y) {$\bullet$};
\node at (0.5,-0.3) {$\bullet$};
\node at (0.5,0.8) {$\bullet$};
\draw[-] (0,0)--(0,0.5);
\end{tikzpicture}$};

\node at (0,5){$
\begin{tikzpicture}
\node at (0.5,-0.6) {$(0,2)$};
\draw[-] (0,0)--(1,0);
\draw[-] (0,0.5)--(1,0.5);
\foreach \x in {0,1} do
\foreach \y in {0,0.5} do
\node at (\x,\y) {$\bullet$};
\end{tikzpicture}$};

\node at (2.5,5){$
\begin{tikzpicture}
\node at (0.5,-0.6) {$(1,2)$};
\draw[-] (0,0)--(1,0)--(0.5,-0.3)--(0,0);
\draw[-] (0,0.5)--(1,0.5)--(0.5,0.8)--(0,0.5);
\foreach \x in {0,1} do
\foreach \y in {0,0.5} do
\node at (\x,\y) {$\bullet$};
\node at (0.5,-0.3) {$\bullet$};
\node at (0.5,0.8) {$\bullet$};
\end{tikzpicture}
$};
\node at (5,5){$
\begin{tikzpicture}
\node at (0.5,-0.6) {$(2,2)$};
\draw[-] (0,0)--(1,0)--(0.5,-0.3)--(0,0);
\fill[cyan](0,0.5)--(1,0.5)--(0.5,0.8)--(0,0.5);
\draw[-] (0,0.5)--(1,0.5)--(0.5,0.8)--(0,0.5);
\foreach \x in {0,1} do
\foreach \y in {0,0.5} do
\node at (\x,\y) {$\bullet$};
\node at (0.5,-0.3) {$\bullet$};
\node at (0.5,0.8) {$\bullet$};
\draw[-] (0,0)--(0,0.5);
\draw[-] (1,0)--(1,0.5);
\end{tikzpicture}
$};
\node at (7.5,5){$
\begin{tikzpicture}
\node at (0.5,-0.6) {$(3,2)$};
\fill[cyan](0,0)--(1,0)--(0.5,-0.3)--(0,0);
\draw[-] (0,0)--(1,0)--(0.5,-0.3)--(0,0);
\fill[cyan](0,0.5)--(1,0.5)--(0.5,0.8)--(0,0.5);
\draw[-] (0,0.5)--(1,0.5)--(0.5,0.8)--(0,0.5);
\foreach \x in {0,1} do
\foreach \y in {0,0.5} do
\node at (\x,\y) {$\bullet$};
\node at (0.5,-0.3) {$\bullet$};
\node at (0.5,0.8) {$\bullet$};
\node at (-0.2,0) {$e$};
\node at (-0.2,0.5) {$f$};
\node at (1.2,0) {$c$};
\node at (1.2,0.5) {$b$};
\node at (0.5,1) {$a$};
\node at (0.7,-0.29) {$d$};
\draw[-] (0,0)--(0,0.5);
\draw[-] (1,0)--(1,0.5);
\end{tikzpicture}
$};
\end{tikzpicture}

\caption{A one-critical bifiltration, given as example in  \cite{CSZ}.}
\label{fig:Bifilt}
\end{figure}
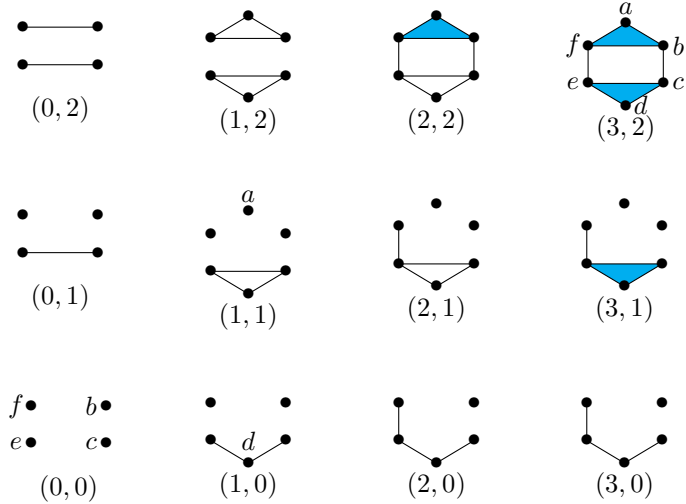

The differentials in the multifiltered simplicial chain complex are
given by 

\[
d_1=
\left[ \!
\begin{array}{cccccccc}
x_2     &  x_2       &0           &0      &0   &0    &0    &0     \\
-x_1x_2^2 &0         &x_1^2x_2^2  &x_2^2  &0   &0    &0    &0     \\
0         &0         &-x_1^2x_2^2 &0      &x_1 &x_2  &0    &0      \\
0         &0         &0           &0      &-1  &0   & 1    &0      \\
0         &0         &0           &0      &0   &-x_2 &-x_1 &x_1^2  \\
0         &-x_1x_2^2 &0           &-x_2^2 &0   &0    &0    &-x_1^2 
\end{array}\! \right]\, , \text{ and}
\]
\[
d_2=\left[ \!
\begin{array}{cc}
 -x_1   &0         \\
 x_1    &0         \\
 0      &0         \\
 -x_1^2 &0         \\
0      &-x_1^2x_2 \\
0      &x_1^3     \\
0      &-x_1^2x_2 \\
0      &0         
\end{array}\! \right]  \, ,
\]
\noindent
where the bases of $0$, $1$ and  $2$-simplices are ordered  lexicographically. These matrices need to be interpreted as follows. The first edge $ab$ is introduced in bi-degrees $(1,2)$ while its vertices $a$ and $b$ are introduced in bi-degrees $(1,1)$ and $(0,0)$. Thus the first column of the first matrix should be read as identifying the boundary of $x_1 x_2^2(ab)$ as $x_2(x_1 x_2(a)) - x_1 x_2^2 (b)$. We adopt the convention that $ab$ is oriented as a simples $[b,a]$ and so on.

Using {\tt Macaulay2} (see our code at \url{https://github.com/n-otter/MPH}), we compute the following minimal presentation of the first homology of $K$:
\[
\begin{tikzpicture}[scale=1,every node/.style={scale=1}]
\node (1) at (0,0) {$S(-3,-1)\oplus S(-2,-2)$};
\node (2) at (7,0) {$ S(-1,-1)\oplus S(-1,-2)\oplus S(-2,-2) \, .$};
\path[->]
(1) edge node[above] {$
\left[ \!
\begin{array}{cc}
x_1^2& 0\\
0 & x_1 \\
0 & 0
\end{array}\! \right]
$}(2);
\end{tikzpicture}
\]
In particular, we see that $H_1(K)$ has three generators, and there are two relations,
which do not interact with each other, that is, there is no non-trivial $2$nd syzygy between them. Thus
\[
H_1(K) \cong   S(-2,-2) \oplus S(-1,-1)/x_1^2\oplus S(-1,-2)/x_1   \, .
\]
\end{exm}

\begin{exm}\label{FirstEx1} 
We next illustrate several concepts introduced in this section.
We  modify the bifiltration $K$ in Example~\ref{FirstEx}, by adding 
 an interior vertex
  $g$ to $abf$  in degree $(1,3)$ and triangulating, so that we obtain the bifiltered complex  $K'$ which in degree smaller than $(3,2)$ is as $K$, and for degrees greater or equal than $(0,3)$ is illustrated in Fig.~\ref{figure:k modified}.
  
  \begin{figure}
  
  \begin{tikzpicture}
  \node at (0,4.75){$
\begin{tikzpicture}
\node at (0.5,-0.6) {$(0,3)$};
\draw[-] (0,0)--(1,0);
\draw[-] (0,0.5)--(1,0.5);
\foreach \x in {0,1} do
\foreach \y in {0,0.5} do
\node at (\x,\y) {$\bullet$};
\end{tikzpicture}$};

\node at (2.5,5){$
\begin{tikzpicture}
\node at (0.5,-0.6) {$(1,3)$};
\draw[-] (0,0)--(1,0)--(0.5,-0.3)--(0,0);
\fill[cyan](0,0.5)--(1,0.5)--(0.5,1)--(0,0.5);
\draw[-] (0,0.5)--(1,0.5)--(0.5,0.75)--(0,0.5);
\draw[-] (0,0.5)--(0.5,1);
\draw[-] (1,0.5)--(0.5,1);
\draw[-] (0.5,0.75)--(0.5,1);
\node at (0.5,1) {$\bullet$};
\node at (0.5,0.75) {$\bullet$};
\draw[-] (0,0.5)--(1,0.5)--(0.5,0.8)--(0,0.5);
\foreach \x in {0,1} do
\foreach \y in {0,0.5} do
\node at (\x,\y) {$\bullet$};
\node at (0.5,-0.3) {$\bullet$};
\end{tikzpicture}
$};
\node at (5,5){$
\begin{tikzpicture}
\node at (0.5,-0.6) {$(2,3)$};
\draw[-] (0,0)--(1,0)--(0.5,-0.3)--(0,0);
\fill[cyan](0,0.5)--(1,0.5)--(0.5,1)--(0,0.5);
\draw[-] (0,0.5)--(1,0.5)--(0.5,0.75)--(0,0.5);
\draw[-] (0,0.5)--(0.5,1);
\draw[-] (1,0.5)--(0.5,1);
\draw[-] (0.5,0.75)--(0.5,1);
\node at (0.5,1) {$\bullet$};
\node at (0.5,0.75) {$\bullet$};
\foreach \x in {0,1} do
\foreach \y in {0,0.5} do
\node at (\x,\y) {$\bullet$};
\node at (0.5,-0.3) {$\bullet$};
\draw[-] (0,0)--(0,0.5);
\draw[-] (1,0)--(1,0.5);
\end{tikzpicture}
$};
\node at (7.5,5){$
\begin{tikzpicture}
\node at (0.5,-0.6) {$(3,3)$};
\fill[cyan](0,0)--(1,0)--(0.5,-0.3)--(0,0);
\draw[-] (0,0)--(1,0)--(0.5,-0.3)--(0,0);
\fill[cyan](0,0.5)--(1,0.5)--(0.5,1)--(0,0.5);
\draw[-] (0,0.5)--(1,0.5)--(0.5,0.75)--(0,0.5);
\draw[-] (0,0.5)--(0.5,1);
\draw[-] (1,0.5)--(0.5,1);
\draw[-] (0.5,0.75)--(0.5,1);
\draw[-,color=gray](0.52,0.75)--(1,1.2);
\node at (1.1,1.25) {$g$};
\foreach \x in {0,1} do
\foreach \y in {0,0.5} do
\node at (\x,\y) {$\bullet$};
\node at (0.5,-0.3) {$\bullet$};
\node at (0.5,0.75) {$\bullet$};
\node at (0.5,1) {$\bullet$};
\node at (-0.2,0) {$e$};
\node at (-0.2,0.5) {$f$};
\node at (1.2,0) {$c$};
\node at (1.2,0.5) {$b$};
\node at (0.5,1.2) {$a$};
\node at (0.7,-0.29) {$d$};
\draw[-] (0,0)--(0,0.5);
\draw[-] (1,0)--(1,0.5);
\end{tikzpicture}$};
\end{tikzpicture}

\caption{A one-critical bifiltration obtained from that in
  Fig.~\ref{fig:Bifilt} by adding the vertex $g$, 1-simplices $ag$, $bg$, $fg$, and 2-simplices $afg$, $abg$ and $bfg$in bidegree $(1,3)$,
  and the 2-simplex $abf$ in bidegree $(1,3)$. Here we only show the simplices in degrees greater or equal to  $(0,3)$, since those in degrees smaller or equal to $(3,2)$ are the same as those for the bifiltration in Fig.~\ref{fig:Bifilt}.}\label{figure:k modified}

  \end{figure}
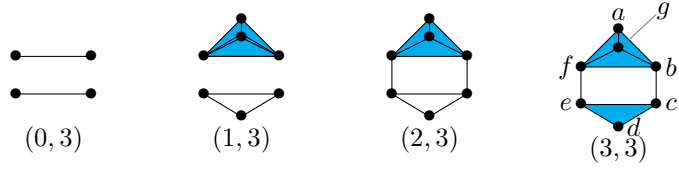

The filtration $K'$ is still one-critical, and we have 
that 
\[
H_1(K') \cong S(-2,-2)\oplus S(-1,-1)/x_1^2 \oplus S(-1,-2)/\langle x_1, x_2 \rangle      \, .
\]

We note that here we have an example of a module with a generator that `dies' in two different degrees. We will introduce birth and death for elements of multiparameter persistence modules in Definition~\ref{D:associated primes invariant}. We give a further example of a $1$-cycle which is the boundary of two different $2$-chains in Example~\ref{onecritWith2bdry}. Note also that, in contrast to the bifiltered complex $K$ from Example~\ref{FirstEx}, the bifiltered complex $K'$ has non-trivial $H_2$ appearing in bidegree $(2,3)$, corresponding to the (hollow) tetrahedron $abfg$.

From the exact sequences
\[
0 \longrightarrow S(-2,0) \stackrel{\cdot x_1^2}{\longrightarrow} S \longrightarrow S/x_1^2 \longrightarrow 0
\]
\[
0 \longrightarrow S(-1,-1) \longrightarrow S(0,-1)\oplus S(0,-1) \stackrel{[x_1,x_2]}{\longrightarrow} S \longrightarrow S/\langle x_1,x_2\rangle \longrightarrow 0
\]
we see that

\begin{alignat}{2}
HS(S(-2,-2),{\bf t}) 					\notag &= \frac{t_1^2t_2^2}{(1-t_1)(1-t_2)}\\
HS(S(-1,-1)/x_1^2,{\bf t}) 				\notag &=\frac{t_1t_2(1-t_1^2)}{(1-t_1)(1-t_2)}\\
HS(S(-1,-2)/\langle x_1,x_2\rangle , {\bf t}) 	\notag &=\frac{t_1t_2^2(1-t_1-t_2+t_1t_2)}{(1-t_1)(1-t_2)}.
\end{alignat}

Summing these yields the multigraded Hilbert series
\[
\frac{t_1^2t_2^2}{(1-t_1)(1-t_2)} + \frac{t_1t_2(1+t_1)}{(1-t_2)}+ t_1t_2^2 
\]
which can be decomposed as in Equation \eqref{HS decomposition} as follows:
\begin{equation}\label{E:decompos}
\frac{t_1^2t_2^2}{(1-t_1)(1-t_2)} + \frac{t_1+t_1^2}{(1-t_2)}-t_1-t_1^2+t_1t_2^2 \, ,
\end{equation}
and written as in Equation \eqref{HSeries} as follows:
\[
\begin{frac}{t_1t_2-t_1^3t_2+t_1t_2^2-t_1t_2^3 + t_1^2t_2^3}{(1-t_1)(1-t_2)}\end{frac} \, .
\]
We see both from the decomposition in \eqref{E:decompos} as well as by evaluating the polynomial  $t_1t_2-t_1^3t_2+t_1t_2^2-t_1t_2^3 + t_1^2t_2^3$ at $(1,1)$, that the rank of the module is $1$.
\end{exm}

\begin{exm}\label{E:1-d}

Let $S=\KK[x]$, and let $M=S(-2)\oplus S(-3)\oplus S(-1)/x^2\oplus
S/x.$ By a calculation similar to the one in Example~\ref{FirstEx1}, we obtain

\begin{alignat}{2}
HS(M,t)\notag & = \frac{1+t^2}{1-t}\\
       \label{E:decomposition one p}& =  \frac{2t^3}{1-t}+2t^2+t+1 \, .
\end{alignat}
By evaluating the polynomial $1+t^2$ at $1$, as well from the decomposition in \eqref{E:decomposition one p}
we see that the rank of the module (and hence the number of infinite intervals in the barcode) is $2$. 
\end{exm}

\begin{exm}\label{onecritWith2bdry} 
We give another example of a one-critical bifiltration in which  a 1-cycle  is the boundary of two different 2-chains with non-comparable (with respect to the partial order on $\mathbb{N}^2$) entry degrees.

Consider the bifiltration of a triangulation of the 2-sphere, such that the equator (obtained by gluing together the one-simplices $\{a,b\}$, $\{b,c\}$ and $\{a,c\}$) has entry degree  $(0,0)$, the southern hemisphere (the $2$-simplex $\{a,b,c\}$) has entry degree $(2,0)$, the northern hemisphere (obtained by gluing together the $2$-simplices $\{a,b,d\}$, $\{b,c,d\}$ and $\{a,c,d\}$) is completed at $(1,2)$, and the whole sphere is completed at $(2,2)$, as illustrated in Fig. \ref{two gen death}.

\begin{figure}[h!]
\[
\begin{tikzpicture}
\node at (0.5,-0.5) {\Small{$(0,0)$}};
\node at (2.5,-0.5) {\Small{$(1,0)$}};
\node at (4.5,1.5) {\Small{$(2,1)$}};
\node at (0.5,1.5) {\Small{$(0,1)$}};
\node at (2.5,1.5) {\Small{$(1,1)$}};
\node at (4.5,-0.5) {\Small{$(2,0)$}};
\node at (0.5,3.5) {\Small{$(0,2)$}};
\node at (2.5,3.5) {\Small{$(1,2)$}};
\node at (4.5,3.5) {\Small{$(2,2)$}};
\node at (0,0) {$\bullet$};
\node at (0,-0.2) {$a$};
\foreach \x in {0,2,4} do 
\node at (\x,-0.2) {$a$};
\foreach \x in {0,2,4} do 
\node at (\x,1.8) {$a$};
\foreach \x in {0,2} do 
\node at (\x,3.8) {$a$};
\node at (1,0) {$\bullet$};
\foreach \x in {1,3,5} do 
\node at (\x,-0.2) {$b$};
\foreach \x in {1,3,5} do 
\node at (\x,1.8) {$b$};
\foreach \x in {1,3} do 
\node at (\x,3.8) {$b$};
\node at (0,1) {$\bullet$};
\foreach \x in {0,2,4} do 
\node at (\x,1.2) {$c$};
\foreach \x in {0,2,4} do 
\node at (\x,3.2) {$c$};
\foreach \x in {0,2} do 
\node at (\x,5.2) {$c$};
\draw[thick] (0,0)-- (0,1)--(1,0) --(0,0);
\foreach \x in {(2,0),(2,1),(3,0)} do
\node at \x {$\bullet$};
\draw[thick] (2,0)-- (2,1)--(3,0) --(2,0);
\draw[fill=cyan] (4,0)-- (4,1)--(5,0) --(4,0);
\foreach \x in {(4,0),(4,1),(5,0)} do
\node at \x {$\bullet$};
\foreach \x in {(4,1),(5,0)} do
\node at \x {$\bullet$};
\node at (0,2) {$\bullet$};
\node at (1,2) {$\bullet$};
\node at (0,3) {$\bullet$};
\draw[thick] (0,2)-- (0,3)--(1,2) --(0,2);
\draw[thick] (2,2)-- (2,3)--(3,2) --(2,2);
\draw[thick] (2,4)-- (2,5)--(3,4) --(2,4);
\draw[thick] (0,4)-- (0,5)--(1,4) --(0,4);
\draw[thick] (4,2)-- (5,2)--(4,3) --(4,2);
\draw[thick] (2,5)-- (3,5)--(3,4) --(2,5);
\draw[fill=cyan, opacity=0.4] (2,4)--(2,5)--(3,5)--(3,4)--(2,4);
\draw[fill=cyan]  (2,5)-- (3,5)--(3,4) --(2,5);

\draw[-,dashed] (2,4)--(3,5);
\draw[fill=cyan]  (4,2)-- (5,2)--(4,3) --(4,2);
\draw[fill=cyan] (4,5)-- (5,5)--(5,4) --(4,5);
\draw[fill=cyan]  (4,4)-- (5,4)--(4,5) --(4,4);
\draw[-,dashed] (4,4)--(5,5);
\foreach \x in {(4,4),(5,4),(4,5),(5,5)} do 
\node at \x {$\bullet$};
\node at (4,3.8) {$a$};
\node at (5,3.8) {$b$};
\node at (4,5.2) {$c$};
\node at (5,5.25) {$d$};
\node at (3,5.25) {$d$};
\foreach \x in {(0,4),(0,5),(1,4),(2,4),(2,5),(3,4),(4,2),(5,2),(4,3),(3,5)} do
\node at \x {$\bullet$};
 \node at (2,2) {$\bullet$};
\node at (3,2) {$\bullet$};
\node at (2,3) {$\bullet$};
\end{tikzpicture} \;\;\;\;\;\;\;\;\;\;\;\;\;\;\;\;
\]
\caption{A one-critical  bifiltration with a one-cycle that is the boundary of two different $2$-chains with non-comparable entry degrees.}\label{two gen death}
\end{figure}
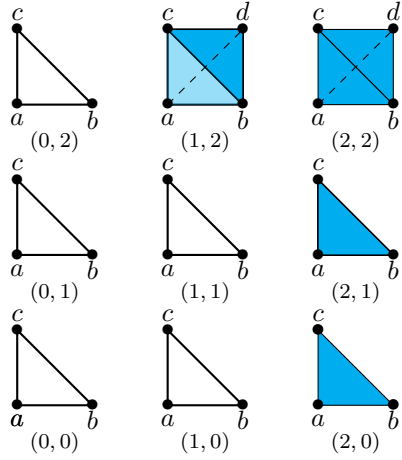
Here we have that the generator $\omega$ of  $\h_1(K)$ is in degree $(0,0)$, while there are $\gamma$ and $\gamma'$ in $C_2(K)$ in degrees $(2,0)$ and $(1,2)$, respectively, with $d_2(\gamma)=x_1^2\omega$ and  $d_2(\gamma')=x_1x_2^2\omega$. So $H_1(K) = S/\langle x_1^2, x_1x_2^2\rangle$, which has a free
resolution
\[
0 \rightarrow S(-2,-2) \longrightarrow S(-2,0)\oplus S(-1,-2) 
\longrightarrow S \longrightarrow H_1(K) \longrightarrow 0,
\]
so the Hilbert series is
\[
\frac{1-t_1^2-t_1t_2^2+t_1^2t_2^2}{(1-t_1)(1-t_2)} = \frac{1}{1-t_2}+t_1+t_1t_2\,.
\]

\end{exm}

\begin{exm}

In Fig.~\ref{F:not one-cr} we give two examples of $2$-filtered simplicial complexes which are not one-critical. 
We compute the minimal presentation of the homology in degree $1$ of these multifiltered simplicial complexes using the procedure given in \cite{CSV14}. 
The first homology $M_A$ of the $2$-filtration in Fig.~\ref{F:not one-cr}(A) has minimal presentation 
\[
0\to S(-1,0)\oplus S(0,-1)\to S
\]
while the first homology $M_B$ of the $2$-filtration in Fig.~\ref{F:not one-cr}(B) has minimal presentation:
\[
0\to S(-1,-1)\oplus S(-1,-1) \to  S(-1,0)\oplus S(0,-1) \, .
\]
From these we compute the multigraded Hilbert series: we have  $M_A=S/\langle x_1,x_2\rangle$, and  its multigraded Hilbert series is $1$. We have $M_B=S(-1,0)/x_2\oplus S(0,-1)/x_1$, and the multigraded Hilbert series of $M_B$ is 

\[
\begin{frac}{t_1+t_2 -2t_1t_2}{(1-t_1)(1-t_2)}\end{frac}=\begin{frac}{t_1}{1-t_1}\end{frac}+\begin{frac}{t_2}{1-t_2}\end{frac}\, .
\]

\begin{figure}[h!]

(A)\begin{tikzpicture}
\node at (0.5,-0.5) {\Small{$(0,0)$}};
\node at (2.5,-0.5) {\Small{$(1,0)$}};
\node at (0.5,1.5) {\Small{$(0,1)$}};
\node at (2.5,1.5) {\Small{$(1,1)$}};
\foreach \x in {0,2} do
\node at (\x,-0.2) {$a$};
\foreach \x in {0,2} do
\node at (\x,1.8) {$a$};
\foreach \x in {1,3} do
\node at (\x,-0.2) {$b$};
\foreach \x in {1,3} do
\node at (\x,1.8) {$b$};
\foreach \x in {0,2} do
\node at (\x,1.2) {$c$};
\foreach \x in {0,2} do
\node at (\x,3.2) {$c$};
\node at (0,0) {$\bullet$};
\node at (1,0) {$\bullet$};
\node at (0,1) {$\bullet$};
\draw[thick] (0,0)-- (0,1)--(1,0) --(0,0);
\draw[fill=cyan] (2,0)-- (2,1)--(3,0) --(2,0);
\node at (2,0) {$\bullet$};
\node at (3,0) {$\bullet$};
\node at (2,1) {$\bullet$};
\draw[fill=cyan] (0,2)-- (0,3)--(1,2) --(0,2);
\node at (0,2) {$\bullet$};
\node at (1,2) {$\bullet$};
\node at (0,3) {$\bullet$};
\draw[fill=cyan] (2,2)-- (2,3)--(3,2) --(2,2);
\node at (2,2) {$\bullet$};
\node at (3,2) {$\bullet$};
\node at (2,3) {$\bullet$};
\end{tikzpicture} \;\;\;\;\;\;\;\;\;\;\;\;\;\;\;\;
(B)\begin{tikzpicture}
\node at (0.5,-0.5) {\Small{$(0,0)$}};
\node at (2.5,-0.5) {\Small{$(1,0)$}};
\node at (0.5,1.5) {\Small{$(0,1)$}};
\node at (2.5,1.5) {\Small{$(1,1)$}};
\foreach \x in {2} do
\node at (\x,-0.2) {$a$};
\foreach \x in {0,2} do
\node at (\x,1.8) {$a$};
\foreach \x in {3} do
\node at (\x,-0.2) {$b$};
\foreach \x in {1,3} do
\node at (\x,1.8) {$b$};
\foreach \x in {2} do
\node at (\x,1.2) {$c$};
\foreach \x in {0,2} do
\node at (\x,3.2) {$c$};
\node at (2,0) {$\bullet$};
\node at (3,0) {$\bullet$};
\node at (2,1) {$\bullet$};
\draw[thick] (2,0)-- (2,1)--(3,0) --(2,0);
\node at (0,2) {$\bullet$};
\node at (1,2) {$\bullet$};
\node at (0,3) {$\bullet$};
\draw[thick] (0,2)-- (0,3)--(1,2) --(0,2);
\draw[fill=cyan] (2,2)-- (2,3)--(3,2) --(2,2);
\node at (2,2) {$\bullet$};
\node at (3,2) {$\bullet$};
\node at (2,3) {$\bullet$};
\end{tikzpicture}
\caption{Two filtrations which are not one-critical.}\label{F:not one-cr}
\end{figure}
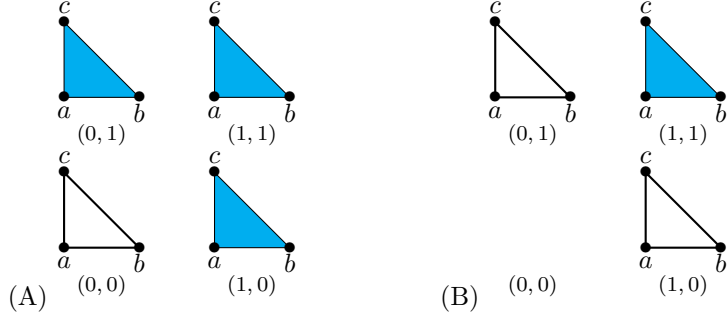
\end{exm}

\begin{exm}\label{multiHS}
Here we give an example of two non-isomorphic modules that have the same multigraded Hilbert series.
Consider the $\N^2$-graded $S=\KK[x_1,x_2]$-modules
\[
\begin{array}{ccc}
M &= &S(-1,-1) \oplus S(-2,-2)\\
N &= &S(-2,-2) \oplus S(-2,-2) \oplus S(-2,-1)/x_2 \oplus S(-1,-2)/x_1\oplus S(-1,-1)/\langle x_1,x_2\rangle
\end{array}
\]

Note that $M$ and $N$ are not isomorphic because one has torsion and the other doesn't. 
The Hilbert series of the modules agree, and are as follows:

\begin{equation}\label{HSexampleNE}
\frac{t_1t_2+t_1^2t_2^2}{(1-t_1)(1-t_2)} =\frac{2t_1^2t_2^2}{(1-t_1)(1-t_2)} 
+ \frac{t_1}{(1-t_2)}
+ \frac{t_2}{(1-t_1)}
-t_1-t_2-t_1t_2.
\end{equation}

We note that already for $1$-parameter PH the Hilbert series may not distinguish between different modules. For instance, consider the  $\N$-graded $S=\KK[x]$-modules $S$ and $S/x^k\oplus S(-k)$. These modules are not isomorphic, as the first is free, while the second has torsion. As the Hilbert functions are in both cases constant  and equal to 1,  the Hilbert series is $1/1-t$ for both modules.
\end{exm}

\section{Associated primes and stratification}\label{S: ass prim}

Our goal in this section is to extract finer information from 
multiparameter persistent homology. To do this, we study the associated primes of an $\N^r$-graded 
$S$-module $M$. 
From a geometric perspective, the minimal associated 
primes of $M$ give information about  the points in the grid $\N^r$ where the module does not vanish, while the embedded associated primes provide a way to stratify such  support into subsets sorted by
the dimension of the prime. 
Using this geometric intuition, in Definition \ref{D:associated primes invariant}  we give a generalization of the definition of death and birth of elements of an $S$-module.

\subsection{Associated primes of multigraded modules}
In this subsection we give a brief overview of the theory of associated primes of $\N^r$-graded modules. 
We outline the main notions and give proofs for convenience, since the theory in the multigraded case is less well-known than the theory in the $\N$-graded case (i.e., when all variables have degree $1$).
We make no claim of originality.

\begin{defn}\label{AssPrime}
Let $R$ be a commutative ring, and let  $M$ be an $R$-module. Let $U$ be a non-empty subset of $M$. Define the \define{annihilator of $U$} as follows:
\[
\Ann{U}=\{r\in R\mid \forall u\in U: ru=0\}.
\]
We say that a prime ideal $\mathfrak{p}\subset R$ is \define{associated to $M$} if $\mathfrak{p}$ is the annihilator of an element of $M$. We denote by $\Ass{M}$ the poset of all primes associated to $M$, with partial order given by inclusion. The minimal elements of  $\Ass{M}$ are called \define{minimal}. Associated primes that are not minimal are  \define{embedded}. 
\end{defn}
 Associated primes of graded modules satisfy an additional property, namely they are homogeneous. 
 
 \begin{defn}
Let $R$ be an $\N^r$-graded ring, and let $\mathfrak{p}\subset R$ be an ideal. We say that $\mathfrak{p}$ is \define{homogeneous} if it is generated by homogeneous elements.
\end{defn}

Let $a$ be an element of an $\N^r$-graded ring $R$ or  $\N^r$-graded module $M$. Then we can write 
\begin{equation}\label{homogeneous components}
a=\sum_{\u\in \mathbb{N}^r}h_\u,
\end{equation} 
where for all $\u$ the element $h_\u$ is  homogeneous  of degree $\u$. Clearly the elements $h_\u$ are uniquely determined by $a$.

\begin{defn}
Let $a$ be an element of an $\N^r$-graded ring $R$ or  $\N^r$-graded module $M$. The non-zero homogeneous elements $h_{\u_1},\dots , h_{\u_n}$ in \eqref{homogeneous components} such that $a=\sum_{i=1}^n h_{\u_i}$ are called \define{homogeneous components of $a$}.
\end{defn}

There is a very useful characterization of homogeneous ideals in terms of homogeneous components:
\begin{lem}\label{L:homogeneous}
An ideal of an $\N^r$-graded ring is homogeneous if and only if it contains the homogeneous components of its elements.
\end{lem}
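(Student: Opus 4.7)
The plan is to prove the biconditional by treating each direction separately, using the direct sum decomposition of an $\N^r$-graded ring to make the degree bookkeeping clean.

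For the forward direction, suppose $I$ is a homogeneous ideal, so there is a generating set $\{g_\alpha\}_{\alpha\in A}$ of homogeneous elements. Given $a\in I$, I would write $a=\sum_{\alpha}r_\alpha g_\alpha$ as a finite $R$-linear combination, then decompose each coefficient $r_\alpha$ into its own homogeneous components $r_\alpha=\sum_{\v}r_{\alpha,\v}$ with $r_{\alpha,\v}\in R_{\v}$. Since multiplication sends $R_{\v}\cdot R_{\deg g_\alpha}$ into $R_{\v+\deg g_\alpha}$, each product $r_{\alpha,\v}g_\alpha$ is homogeneous, and is itself in $I$ since $g_\alpha\in I$. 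Regrouping the double sum $\sum_{\alpha,\v}r_{\alpha,\v}g_\alpha$ by total degree $\u=\v+\deg g_\alpha$ produces an expression $a=\sum_{\u}h_{\u}$ with each $h_{\u}\in R_{\u}\cap I$. By the uniqueness of the decomposition coming from $R=\bigoplus_{\u}R_{\u}$, these $h_{\u}$ must be exactly the homogeneous components of $a$, so all of them lie in $I$.

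For the reverse direction, suppose $I$ contains the homogeneous components of each of its elements. Let $\mathcal{H}\subseteq I$ denote the set of all homogeneous elements of $I$. For any $a\in I$, the decomposition $a=\sum_{\u}h_{\u}$ into homogeneous components expresses $a$ as an $R$-linear (indeed, integer-linear) combination of elements of $\mathcal{H}$ by hypothesis. Hence $\mathcal{H}$ generates $I$, so $I$ is homogeneous by definition.

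The only mildly delicate point is the regrouping step in the forward direction: one has to know that collecting the terms $r_{\alpha,\v}g_\alpha$ sharing a common total degree really reconstructs the homogeneous components of $a$. I expect this to be the main (though minor) obstacle, and it is handled by invoking the uniqueness of the homogeneous decomposition guaranteed by the direct sum structure $R=\bigoplus_{\u\in\N^r}R_{\u}$ in Definition~\ref{gradedRing}. Beyond this, the argument is purely formal and essentially identical to the standard single-grading proof.
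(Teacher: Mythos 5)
Your proposal is correct and follows essentially the same argument as the paper: both directions proceed by decomposing coefficients into homogeneous pieces, regrouping by total degree, and invoking uniqueness of the graded decomposition. Your write-up is somewhat more explicit about the regrouping step, but the approach is identical.
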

\begin{proof}
Let $R$ be an $\N^r$-graded ring, and let $I\subset R$ be a homogeneous ideal generated by a set of homogeneous elements $\{h_j\}_{j\in J}$. Let $a\in I$, and let $a=\sum_{i=1}^n r_i$ where $r_i\in R$ are the homogeneous components of $a$. Since $a$ is in $ I=\langle h_j\rangle_{j\in J}$, we have $a=\sum_{j\in J'\subset J} c_j h_j$ for some $c_j \in R$.
Since $a$ can be written uniquely as a sum of homogeneous elements of $R$, we must have that $r_i$ is the sum of some of the homogeneous summands of $c_j$ times $h_j$ for some of the $j \in J$. Therefore we have $r_i\in I$ for all $i$.

For the converse, suppose that $I$ is an ideal in $R$ which contains the homogeneous components of its elements. Then $I$ is generated by the homogeneous components of its elements, and is therefore homogeneous.
\end{proof}

We  can now state the following result about associated primes for modules over an arbitrary commutative ring $R$ \cite[IV.3]{B98b}. For the reader's convenience, we reproduce here a proof following the one in \cite[IV.3]{B98b}.

\begin{prop}\label{P:prim dec}
Let $M$ be an $\N^r$-graded module over  an $\N^r$-graded ring  $R$. Any associated prime $\mathfrak{p}$ of $M$  is homogeneous. 
\end{prop}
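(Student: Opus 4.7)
The plan is to show that for any $r \in \mathfrak{p} = \Ann{m}$, each homogeneous component of $r$ also lies in $\mathfrak{p}$, by induction on the number $k$ of nonzero homogeneous components of $m$. Fix an admissible total ordering $<$ on $\N^r$ (for instance, lexicographic order) refining the coordinatewise partial order $\pred$, and write $m = m_{v_1} + \cdots + m_{v_k}$ with $v_1 < \cdots < v_k$. The base case $k = 1$ is handled directly: $m$ is homogeneous, and if $r \in \Ann{m}$ has homogeneous components $r_{u_1}, \dots, r_{u_s}$, then $rm = \sum_j r_{u_j} m$ is a sum of elements of pairwise distinct degrees $u_j + v_1$, which forces each $r_{u_j} m = 0$. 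In particular $\Ann{m_{v_k}}$ is always homogeneous.

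For the inductive step with $k \geq 2$, I would split into two cases depending on whether $\Ann{m_{v_k}}$ is contained in $\mathfrak{p}$. If $\Ann{m_{v_k}} \not\subseteq \mathfrak{p}$, then because $\Ann{m_{v_k}}$ is homogeneous, some homogeneous $g \in \Ann{m_{v_k}} \setminus \mathfrak{p}$ must exist. Since $g \notin \mathfrak{p}$ and $\mathfrak{p}$ is prime, a direct check gives $\Ann{gm} = \{r : rg \in \mathfrak{p}\} = \mathfrak{p}$, while $gm = \sum_{i < k} g\, m_{v_i}$ has at most $k - 1$ nonzero homogeneous components (they sit in the pairwise distinct degrees $\deg g + v_i$). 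Applying the inductive hypothesis to $gm$ then yields that $\mathfrak{p}$ is homogeneous.

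If instead $\Ann{m_{v_k}} \subseteq \mathfrak{p}$, I would argue directly by a secondary induction on the number $s$ of nonzero homogeneous components of $r \in \mathfrak{p}$. Writing $r = r_{u_1} + \cdots + r_{u_s}$ with $u_1 < \cdots < u_s$ and expanding $rm = \sum_{i,j} r_{u_j} m_{v_i} = 0$, admissibility of the ordering ensures that the maximal degree $u_s + v_k$ is attained only by the pair $(j,i) = (s,k)$. Hence $r_{u_s} m_{v_k} = 0$, so $r_{u_s} \in \Ann{m_{v_k}} \subseteq \mathfrak{p}$, whence $r - r_{u_s} \in \mathfrak{p}$ has $s - 1$ nonzero components and the secondary induction closes. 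Combining with Lemma \ref{L:homogeneous} yields that $\mathfrak{p}$ is homogeneous.

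The main obstacle I anticipate is that a naive top-degree argument alone does not close the proof: peeling off the top-degree piece of $rm = 0$ places $r_{u_s}$ only in $\Ann{m_{v_k}}$, not necessarily in $\mathfrak{p}$ itself. The case split above is precisely the device needed to handle this, leveraging the primality of $\mathfrak{p}$ to replace $m$ by a strictly shorter element $gm$ with the same annihilator whenever $\Ann{m_{v_k}}$ escapes $\mathfrak{p}$.
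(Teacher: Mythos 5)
Your proof is correct, and it takes a genuinely different route from the paper's. Both arguments fix an admissible total order on $\N^r$ and both must deal with the same obstruction that you flag explicitly: peeling off the top component $r_{u_s}$ of $r$ via the unique top degree of $rm=0$ places it only in $\Ann{m_{v_k}}$, not immediately in $\mathfrak{p}$. The paper resolves this by a power argument: it shows, by a nested induction over the components of $m$, that a sufficiently high power of the top component $p_P$ annihilates all of $m$, so $p_P^{n} \in \mathfrak{p}$ and then primality gives $p_P \in \mathfrak{p}$. You resolve it instead by a case split together with a replacement trick: if $\Ann{m_{v_k}}$ escapes $\mathfrak{p}$, you pick a homogeneous $g$ outside $\mathfrak{p}$ killing $m_{v_k}$ and pass to $gm$, which has strictly fewer components and, by primality, the same annihilator $\mathfrak{p}$, closing the outer induction; if $\Ann{m_{v_k}}\subseteq\mathfrak{p}$, the naive top-degree peel-off succeeds and a secondary induction on the number of components of $r$ finishes. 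The paper's approach uses primality once, in the form ``$a^n\in\mathfrak{p}\Rightarrow a\in\mathfrak{p}$''; yours uses it in the form ``$ab\in\mathfrak{p},\ b\notin\mathfrak{p}\Rightarrow a\in\mathfrak{p}$'' to normalize $m$. Your version avoids taking powers entirely, at the cost of the case analysis; the paper's avoids the case analysis at the cost of the nilpotence computation. Both then invoke Lemma~\ref{L:homogeneous} at the end. One small stylistic remark: your base case $k=1$ is in fact absorbed by your Case B (when $m$ is homogeneous, $\Ann{m_{v_k}}=\mathfrak{p}$), so the outer induction could be phrased as ``repeatedly shorten $m$ via Case A until Case B applies,'' but this is cosmetic.
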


\begin{proof}
Let $m\in M$ and suppose that  $\mathfrak{p}=\Ann m$ is prime. To show that $\mathfrak{p}$ is homogeneous,  by Lemma \ref{L:homogeneous} it suffices to show that $\mathfrak{p}$ contains the homogeneous components of its elements.  First note that we can put a total order $\leq $ on $\mathbb{N}^r$ which is compatible with the monoid structure, in the sense that if $u\leq v$ than for any other element $w\in \mathbb{N}^r$ we have $u+w\leq v+w$.

Let $p\in \mathfrak{p}$, and write $p=\sum_ip_{\u_i}$ where the $p_{\u_i}$ are the homogeneous components of $p$.  Similarly, write $m=\sum_j m_{\v_j}$, and let $P=\max_i\{\u_i\}$ and $Q=\max_j\{\v_j\}$. We prove that $p_{P}\in \mathfrak{p}$, and the claim then follows by induction on the number of homogeneous components of $p$. For this, we prove by induction on the number of homogeneous components of $m$ that for any homogeneous component $m_{\v_j}$ there is a positive integer $n$ such that $p_{P}^nm_{\v_j}=0$. It then follows that $p_{P}^{n'} m=0$ for some positive integer $n'$, and hence $p_{P}\in \mathfrak{p}$ since $\mathfrak{p}$ is prime. 
The homogeneous component of degree $P+\v_j$ of $pm$ can be written as $\sum_{\v_{j'}\geq \v_j} p_{P-\v_{j'}+\v_j}m_{\v_{j'}}$ for all $\v_j$.
Since $pm=0$ by assumption, we have that the homogeneous component of degree $P+\v_j$ is zero for all $\v_j$, and hence we have   $p_Pm_{\v_j}=-\sum_{\v_{j'}> \v_j} p_{P-\v_{j'}+\v_j}m_{\v_{j'}}$. Therefore we can deduce that $p_Pm_Q=0$, and we have proved the induction step. Assume now that $p^l_Pm_{j''}=0$ for all $j'\leq j''<Q$ and some $l>0$. Then if $\v_j\in \mathbb{N}^r$ is the largest index such that $m_{\v_j}\ne 0$ and $\v_j<\v_{j'}$ we have that 
\[
p_P^{l+1}m_{\v_j}=
-\sum_{\v_{j'}> \v_j} p_{P-\v_{j'}+\v_j}p_P^lm_{\v_{j'}}=0.
\] 
\end{proof}

For modules over $S=\KK[x_1,\dots , x_r]$ the $\N^r$-grading imposes a strong condition on homogeneous primes, which turn out to have a very simple form:

\begin{lem}\label{L:hom prime}
For a finitely generated $\N^r$-graded $S$-module $M$, any homogeneous prime $\mathfrak{p}$ of $M$  is of the form $\mathfrak{p} = \langle x_{i_1}, \ldots,  x_{i_k}\rangle$.
\end{lem}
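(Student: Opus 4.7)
The plan is to exploit the very restrictive nature of the $\N^r$-grading on $S=\KK[x_1,\dots,x_r]$. Under this grading, the component $S_\u$ of degree $\u=(u_1,\dots,u_r)$ is the one-dimensional $\KK$-vector space spanned by the single monomial $x^\u = x_1^{u_1}\cdots x_r^{u_r}$. Consequently, every nonzero homogeneous element of $S$ is a scalar multiple of a monomial. Combined with Lemma~\ref{L:homogeneous}, this tells us that any homogeneous ideal of $S$, and in particular $\mathfrak{p}$, is generated by a set of monomials.

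Next, I would exploit primality to whittle down these monomial generators to variables. Fix a monomial $x^\u \in \mathfrak{p}$ with $\u\ne\mathbf{0}$, and write $x^\u = x_{i_1}^{u_{i_1}}\cdots x_{i_s}^{u_{i_s}}$ as a product of its prime-power factors. Since $\mathfrak{p}$ is prime, one of the factors $x_{i_t}^{u_{i_t}}$ must lie in $\mathfrak{p}$; applying primality once more to $x_{i_t}\cdot x_{i_t}^{u_{i_t}-1}$, we conclude that $x_{i_t}\in\mathfrak{p}$ for some index $t$ with $u_{i_t}>0$.

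Applying this to each monomial generator of $\mathfrak{p}$, one obtains a subset $\{x_{i_1},\dots,x_{i_k}\}\subseteq\{x_1,\dots,x_r\}$ of variables contained in $\mathfrak{p}$, with the property that every monomial generator of $\mathfrak{p}$ is divisible by some $x_{i_j}$. Hence
\[
\mathfrak{p}\subseteq \langle x_{i_1},\dots,x_{i_k}\rangle,
\]
while the reverse inclusion is immediate from $x_{i_j}\in\mathfrak{p}$. This yields the claimed equality $\mathfrak{p}=\langle x_{i_1},\dots,x_{i_k}\rangle$.

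The argument is essentially routine once the $\N^r$-grading is unpacked, and in fact the module $M$ and its finite generation play no role beyond providing the setting in which Lemma~\ref{L:homogeneous} applies; the real content is that fine multigrading forces homogeneous ideals to be monomial, and prime monomial ideals in $S$ are generated by subsets of the variables. The only mild obstacle is to be careful about the two successive uses of primality (first to extract a prime-power factor, then to extract the variable itself), but no further subtleties arise.
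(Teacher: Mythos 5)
Your proof is correct and follows essentially the same route as the paper's: use homogeneity (via Lemma~\ref{L:homogeneous}) to see that $\mathfrak{p}$ is a monomial ideal, then use primality to extract generating variables. Your handling of the primality step is in fact a bit more careful than the paper's — you select, for each monomial generator, \emph{some} variable dividing it that lies in $\mathfrak{p}$ and verify both inclusions, whereas the paper's wording (``$x_j \in \mathfrak{p}$ whenever the $j$th entry of $\u_i$ is non-zero'') overstates what primality gives (e.g.\ $x_1 x_2 \in \langle x_1 \rangle$ does not force $x_2 \in \langle x_1 \rangle$); your version avoids this slip.
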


\begin{proof}

Let $\mathfrak{p}\subset S$ be an associated prime of $M$; by  Proposition~ \ref{P:prim dec}  $\mathfrak{p}$ is homogeneous. Now let $p\in \mathfrak{p}$, and write $ p=\sum_{i=1}^n p_{\u_i}$ where the  $p_{\u_i}$ are the homogeneous components of $p$. Since $\mathfrak{p}$ is homogeneous, we know that $p_{\u_i}\in \mathfrak{p}$ for all $i$. Furthermore, since $S$ is $\N^r$-graded, we have that $p_{\u_i}=a\mathbf{x}^{\u_i}$ for some $a\in \KK$, and therefore  $ \mathfrak{p} \subseteq \langle x_{i_1},\dots , x_{i_k}\rangle $, where the indices $i_1,\dots , i_k$ are determined by the non-zero entries in the degrees of the homogeneous components of the elements of $\pp$. 
 Furthermore, since  $\mathfrak{p}$ is prime, we have that $x_j\in \mathfrak{p}$ whenever  the $j$th entry ${u_i}_j$ of $\u_i$ is non-zero. Therefore $\langle x_{i_1},\dots , x_{i_k}\rangle \subseteq \mathfrak{p} $.

\end{proof}

\begin{cor}\label{MPHASS}
For a finitely generated $\N^r$-graded $S$-module $M$, any associated prime $\mathfrak{p}$ of $M$  is of the form $\mathfrak{p} = \langle x_{i_1}, \ldots,  x_{i_k}\rangle$.
\end{cor}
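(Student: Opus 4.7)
The corollary is essentially an immediate consequence of the two preceding results that have already done the real work. My plan is to simply chain Proposition~\ref{P:prim dec} with Lemma~\ref{L:hom prime}: start with an arbitrary associated prime $\mathfrak{p}$ of $M$, invoke Proposition~\ref{P:prim dec} to conclude that $\mathfrak{p}$ is homogeneous, then apply Lemma~\ref{L:hom prime} to conclude that $\mathfrak{p}$ has the desired monomial form $\langle x_{i_1}, \ldots, x_{i_k}\rangle$.

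More explicitly, let $\mathfrak{p} \in \Ass(M)$. By definition, $\mathfrak{p} = \Ann(m)$ for some $m \in M$. Since $M$ is $\N^r$-graded over the $\N^r$-graded ring $S$, Proposition~\ref{P:prim dec} applies and yields that $\mathfrak{p}$ is a homogeneous ideal of $S$. Now Lemma~\ref{L:hom prime} takes over: a homogeneous prime of $S = \KK[x_1,\ldots,x_r]$ is necessarily generated by a subset of the variables, since any homogeneous element of $S$ is a scalar multiple of a monomial $\mathbf{x}^{\u}$, and primality forces each variable appearing in such a monomial to lie in $\mathfrak{p}$.

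There is effectively no obstacle here: both ingredients are already in place, and the argument is a two-step citation. The only minor point to note is that Lemma~\ref{L:hom prime}, as stated, already restricts to associated primes in its proof, so one could even view the corollary as a repackaging of that lemma. I will simply make the logical dependence explicit in the write-up, so that the reader sees precisely where the $\N^r$-grading of $M$ (needed for Proposition~\ref{P:prim dec}) and the structure of $S$ as a polynomial ring (needed for Lemma~\ref{L:hom prime}) each enter.
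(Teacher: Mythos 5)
Your proposal is correct and matches the paper's own proof exactly: cite Proposition~\ref{P:prim dec} to get homogeneity of the associated prime, then cite Lemma~\ref{L:hom prime} to conclude it is generated by a subset of the variables. Nothing further to add.
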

\begin{proof}
Any associated prime is homogeneous by Proposition \ref{P:prim dec}, and homogeneous primes are generated by a subset of  variables by Lemma \ref{L:hom prime}.
\end{proof}

\subsection{Minimal associated primes and support shape}\label{SS:support}

We now use the fact that associated primes of the modules that we are
interested in have the form of Lemma \ref{MPHASS} to 
define an invariant which only depends on the minimal associated primes. This invariant is motivated  by considering prime ideals as affine algebraic varieties, see Remark \ref{R:ag mot}.

\begin{defn}\label{D:support}
Let $M$ be a finitely generated $\N^r$-graded $S$-module, and let $\mathfrak{p}= \langle x_{i_1}, \ldots,  x_{i_k}\rangle$ be an associated  prime of $M$. 
The \define{support} of $\mathfrak{p}$ is  defined as follows: 
\[
c_{\mathfrak{p}}=\left\{(u_1,\dots , u_r)\in \mathbb{N}^r\mid  u_i=0 \text{ for all } i\in \{i_1,\dots , i_k\}\right\} \, .
\]
The \define{support shape} of $M$ is the  subset $ss(M)$ of $\mathbb{N}^r$ defined as follows:
 
 \[
 ss(M)=\bigcup_{\mathfrak{p}\in \mathrm{Ass}(M)}c_\mathfrak{p}.
 \]

\end{defn} 

We note that the support shape of the module is completely determined by the minimal elements of $\Ass M$.   We give  examples of support shapes for the modules in Example \ref{multiHS} in Fig.~\ref{F:support}(A).

\begin{figure}[h!]

   \centering
(A) \qquad
\subfigure[{The points of $\mathbb{N}^r$ at which the module $M$, as well as  $N$, does not vanish.}]{
   \label{first figure}
        \centering
    \begin{tikzpicture}[scale=0.55, every node/.style={scale=0.5}]
\foreach \y in {0,1,2,3,4} do
\foreach \x in {0,1,2,3,4} do
\node at (\x,\y) {\color{gray!30}{$\bullet$}};
\fill[color=cyan!60, fill opacity=0.4] (0.6,0.6)--(4.4,0.6)--(4.4,4.4)--(0.6,4.4);
\draw[color=blue, line width=0.5] (0.6,4.4)--(0.6,0.6)--(4.4,0.6);\end{tikzpicture}
} \qquad\qquad\qquad\qquad\qquad
\subfigure[{The support shape of $M$, as well as  $N$.}]{
\label{second figure}
        \centering
 \begin{tikzpicture}[scale=0.55, every node/.style={scale=0.5}]
\foreach \y in {0,1,2,3,4} do
\foreach \x in {0,1,2,3,4} do
\node at (\x,\y) {\color{gray!30}{$\bullet$}};
\fill[color=cyan!60, fill opacity=0.4] (-0.4,-0.4)--(4.4,-0.4)--(4.4,4.4)--(-0.4,4.4);
\draw[color=blue, line width=0.5] (-0.4,4.4)--(-0.4,-0.4)--(4.4,-0.4);\end{tikzpicture}
}\\

\hspace{-6cm}(B) \qquad
\subfigure[{The stratification of   $ss(M)$.}]{
\label{second figure}
        \centering
 \begin{tikzpicture}[scale=0.55, every node/.style={scale=0.5}]
\foreach \y in {0,1,2,3,4} do
\foreach \x in {0,1,2,3,4} do
\node at (\x,\y) {\color{gray!30}{$\bullet$}};
\fill[color=cyan!60, fill opacity=0.4] (-0.4,-0.4)--(4.4,-0.4)--(4.4,4.4)--(-0.4,4.4);
\draw[color=blue, line width=0.5] (-0.4,4.4)--(-0.4,-0.4)--(4.4,-0.4);\end{tikzpicture}
}

(C) \qquad
\subfigure[{The chain in the stratification of $ss(N)$ corresponding to the increasing sequence of associated primes $(0)\subset (x_1)\subset (x_1,x_2)$.}]{
 \begin{tikzpicture}[scale=0.55, every node/.style={scale=0.5}]
\foreach \y in {0,1,2,3,4} do
\foreach \x in {0,1,2,3,4} do
\node at (\x,\y) {\color{gray!30}{$\bullet$}};
\fill[color=cyan!60, fill opacity=0.2] (-0.5,-0.5)--(0.3,-0.5)--(0.3,0.3)--(-0.5,0.3)--(-0.5,-0.5);
\draw[color=blue, line width=0.5](-0.5,-0.5)--(0.3,-0.5)--(0.3,0.3)--(-0.5,0.3)--(-0.5,-0.5);
\fill[color=cyan!60, fill opacity=0.3] (-0.35,-0.35)--(0.45,-0.35)--(0.45,4.4)--(-0.35,4.4);
\draw[color=blue, line width=0.5] (-0.35,4.4)--(-0.35,-0.35)--(0.45,-0.35)--(0.45,4.4);
\fill[color=cyan!60, fill opacity=0.4] (-0.2,-0.2)--(4.4,-0.2)--(4.4,4.4)--(-0.2,4.4);
\draw[color=blue, line width=0.5] (-0.2,4.4)--(-0.2,-0.2)--(4.4,-0.2);
\end{tikzpicture}
}
\qquad\qquad\qquad\qquad
\subfigure[{The chain in the stratification of $ss(N)$ corresponding to the increasing sequence of associated primes $(0)\subset (x_2)\subset (x_1,x_2)$.}]{
 \begin{tikzpicture}[scale=0.55, every node/.style={scale=0.5}]
\foreach \y in {0,1,2,3,4} do
\foreach \x in {0,1,2,3,4} do
\node at (\x,\y) {\color{gray!30}{$\bullet$}};
\fill[color=cyan!60, fill opacity=0.2] (-0.5,-0.5)--(0.3,-0.5)--(0.3,0.3)--(-0.5,0.3)--(-0.5,-0.5);
\draw[color=blue, line width=0.5](-0.5,-0.5)--(0.3,-0.5)--(0.3,0.3)--(-0.5,0.3)--(-0.5,-0.5);
\fill[color=cyan!60, fill opacity=0.3] (-0.35,-0.35)--(-0.35,0.45)--(4.4,0.45)--(4.4,-0.35);
\draw[color=blue, line width=0.5] (4.4,-0.35)--(-0.35,-0.35)--(-0.35,0.45)--(4.4,0.45);
\fill[color=cyan!60, fill opacity=0.4] (-0.2,-0.2)--(4.4,-0.2)--(4.4,4.4)--(-0.2,4.4);
\draw[color=blue, line width=0.5] (-0.2,4.4)--(-0.2,-0.2)--(4.4,-0.2);
\end{tikzpicture}
}
\captionsetup{singlelinecheck=off}
\caption[.]{We use grids with gray dots to represent the poset $\N^2$ and we represent subsets of $\mathbb{N}^2$ by cyan regions. We illustrate examples of support shapes and their stratifications for the two modules $M=S(-1,-1)\oplus S(-2,-2)$ and 
\begin{center}
$
N=S(-2,-2)\oplus S(-2,-2)\oplus \begin{frac}{S(-2,-1)}{ x_2}\end{frac} \oplus \begin{frac}{S(-1,-2)}{x_1}\end{frac}\oplus \begin{frac}{S(-1,-1)} {\langle x_1,x_2\rangle}\end{frac}$
\end{center}
from 
 Example \ref{multiHS}. 
We have $\Ass M=\{(0)\}$ and $\Ass N=\{(0),(x_1),(x_2),(x_1,x_2)\}$.  In (A) we illustrate  the points in $\N^2$ at which the modules do not vanish as well as their support shapes. We note that the support shape is the same for $M$  and $N$. In (B) and (C) we illustrate the stratifications for the modules. 
Since $M$ has only one associated prime, the  stratification of the support shape is the support shape itself.
}\label{F:support}
\end{figure}

The  support shape of a module $M$ encodes the points $\u$ of $\mathbb{N}^r$ such that $M_\u\ne 0$ only up to translation (i.e.,  it forgets the degree of the generators), thickness (i.e.,  it forgets the degrees of the generators of the annihilators) and multiplicity (i.e.,  it forgets the number of generators of the module). We illustrate  these notions with some  examples in Fig.~\ref{F:trans thick mult}.
Thus, the support shape encodes  in which directions elements of the module live, without giving more refined information about the degrees of the elements or the multiplicity. With local cohomology we can catch a shadow of this information, as we explain in Section \ref{Lrank}.

\begin{figure}[h!]
    \centering
(A) \qquad
\subfigure[{The points of $\mathbb{N}^2$ at which the module $
   M_1=S(-1,-2)/x_1$ does not vanish.}]{
   \label{first figure}
        \centering
    \begin{tikzpicture}[scale=0.6, every node/.style={scale=0.6}]
\foreach \y in {0,1,2,3,4} do
\foreach \x in {0,1,2,3,4} do
\node at (\x,\y) {\color{gray!30}{$\bullet$}};
\fill[color=cyan!60, fill opacity=0.4] (1.6,0.6)--(2.4,0.6)--(2.4,4.4)--(1.6,4.4);
\draw[color=blue, line width=0.5] (1.6,4.4)--(1.6,0.6)--(2.4,0.6)--(2.4,4.4);\end{tikzpicture}
} \qquad\qquad\qquad
\subfigure[{The support shape of $M_1$.}]{
\label{second figure}
        \centering
 \begin{tikzpicture}[scale=0.6, every node/.style={scale=0.6}]
\foreach \y in {0,1,2,3,4} do
\foreach \x in {0,1,2,3,4} do
\node at (\x,\y) {\color{gray!30}{$\bullet$}};
\fill[color=cyan!60, fill opacity=0.4] (-0.4,-0.4)--(0.4,-0.4)--(0.4,4.4)--(-0.4,4.4);
\draw[color=blue, line width=0.5] (-0.4,4.4)--(-0.4,-0.4)--(0.4,-0.4)--(0.4,4.4);\end{tikzpicture}
}

(B) \qquad
  \subfigure[{The points of $\mathbb{N}^2$ at which the module $
   M_2=S/(x_1^2)$ does not vanish.}]{
   \label{first figure}
        \centering
    \begin{tikzpicture}[scale=0.6, every node/.style={scale=0.6}]
\foreach \y in {0,1,2,3,4} do
\foreach \x in {0,1,2,3,4} do
\node at (\x,\y) {\color{gray!30}{$\bullet$}};
\fill[color=cyan!60, fill opacity=0.4] (-0.4,-0.4)--(1.4,-0.4)--(1.4,4.4)--(-0.4,4.4);
\draw[color=blue, line width=0.5] (-0.4,4.4)--(-0.4,-0.4)--(1.4,-0.4)--(1.4,4.4);

\end{tikzpicture}
} \qquad\qquad\qquad
\subfigure[{The support shape of $M_2$.}]{
\label{second figure}
        \centering
       \begin{tikzpicture}[scale=0.6, every node/.style={scale=0.6}]
\foreach \y in {0,1,2,3,4} do
\foreach \x in {0,1,2,3,4} do
\node at (\x,\y) {\color{gray!30}{$\bullet$}};
\fill[color=cyan!60, fill opacity=0.4] (-0.4,-0.4)--(0.4,-0.4)--(0.4,4.4)--(-0.4,4.4);
\draw[color=blue, line width=0.5] (-0.4,4.4)--(-0.4,-0.4)--(0.4,-0.4)--(0.4,4.4);
\end{tikzpicture}
}

 (C) \qquad 
   \subfigure[{The points of $\mathbb{N}^2$ at which the module 
   $M_3=S/x_1\oplus S/x_1$ does not vanish.}]{
   \label{first figure}
        \centering
    \begin{tikzpicture}[scale=0.6, every node/.style={scale=0.6}]
\foreach \y in {0,1,2,3,4} do
\foreach \x in {0,1,2,3,4} do
\node at (\x,\y) {\color{gray!30}{$\bullet$}};
\fill[color=cyan!60, fill opacity=0.3] (-0.35,-0.35)--(0.45,-0.35)--(0.45,4.4)--(-0.35,4.4);
\draw[color=blue, line width=0.5] (-0.35,4.4)--(-0.35,-0.35)--(0.45,-0.35)--(0.45,4.4);
\fill[color=cyan!60, fill opacity=0.4] (-0.5,-0.5)--(0.3,-0.5)--(0.3,4.4)--(-0.5,4.4);
\draw[color=blue, line width=0.5] (-0.5,4.4)--(-0.5,-0.5)--(0.3,-0.5)--(0.3,4.4);

\end{tikzpicture}
} \qquad\qquad\qquad
\subfigure[{The support shape of $M_3$.}]{
\label{second figure}
        \centering
       \begin{tikzpicture}[scale=0.6, every node/.style={scale=0.6}]
\foreach \y in {0,1,2,3,4} do
\foreach \x in {0,1,2,3,4} do
\node at (\x,\y) {\color{gray!30}{$\bullet$}};
\fill[color=cyan!60, fill opacity=0.4] (-0.4,-0.4)--(0.4,-0.4)--(0.4,4.4)--(-0.4,4.4);
\draw[color=blue, line width=0.5] (-0.4,4.4)--(-0.4,-0.4)--(0.4,-0.4)--(0.4,4.4);
\end{tikzpicture}
}

\caption{We illustrate how the  support shape of an $S=\KK[x_1,x_2]$-module $M$ encodes the points $\u$ of $\mathbb{N}^r$ such that $M_\u\ne 0$ only up to  (A) translation (i.e., it forgets the degrees of the generators), (B) thickness (i.e., it forgets the degrees of the minimal generators of the annihilators) and (C) multiplicity (i.e., it forgets the number of generators of the module). }\label{F:trans thick mult}
\end{figure}
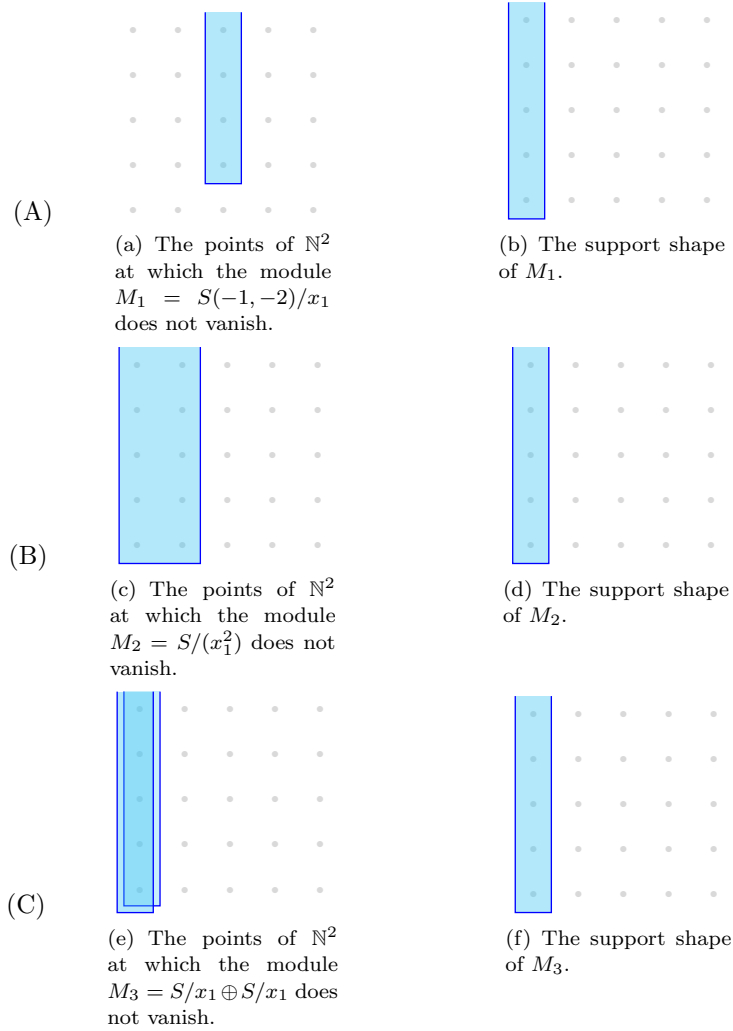

\subsection{Embedded associated primes and stratification of the support shape}\label{SS:embedded primes}

We now use  embedded (i.e., non minimal) associated primes to give  a stratification of the support shape of the module.
Namely, by Lemma \ref{MPHASS} we know that the associated primes of a module over the polynomial ring $\KK[x_1,\dots , x_r]$ are of the form 
$ \langle x_{i_1}, \ldots,  x_{i_k}\rangle$. The \define{dimension}
of such an ideal is defined to be $r-k$ \cite[Chapter 9]{E}.  Therefore, given a nested sequence of associated primes 
\[
\pp_0\subseteq \pp_1 \subseteq \dots \subseteq \pp_m
\]
of dimension $d_0\geq d_1 \geq \dots \geq d_m$, we obtain the following nested sequence of subsets of the support shape of $M$:

\begin{equation}\label{E:stratification}
c_{\pp_m}\subseteq \dots \subseteq c_{\pp_1}\subseteq c_{\pp_0}\subseteq ss(M) \, .
\end{equation}

This motivates the following definition:

\begin{defn}\label{D:stratification}
We call the poset $\Ass M$ the \define{stratification} of the support shape of $M$.
\end{defn}

 In Fig.~\ref{F:support}(B)--(C) we give examples of stratifications for support shapes.  
  
 \begin{rmk}\label{R:ag mot}
 
  In algebraic geometry, given an ideal $I\subset S$, one defines the vanishing set of $I$ as the subset $V(I)\subset \KK^r$ given by the points of $\KK^r$ at which  the evaluation of all polynomials in $I$ vanishes. The sets $V(I)$ are closed in the Zariski topology. For example, for $r=2$ we have that $V(\langle x_1\rangle)=\{(0,a)\in \KK^2\mid a\in \KK\}$, and more generally $V(\mathfrak{p})=c_\mathfrak{p}$ if $\mathfrak{p}$ is as in Definition~\ref{D:support}.
 In our case we are not interested in the whole affine space $\KK^r$, but rather in the  non-negative grid $\N^r$, and we therefore give the definition of support of a prime ideal in Definition \ref{D:support}, which is motivated by vanishing sets. 

When working in the affine space $\KK^r$, one could define the support shape of a finitely generated module $M$ over $S$  as the union over the vanishing sets of the associated primes of the module (which in turn  is equal to the vanishing set of the annihilator of the module). The set $ss(M)$ would then be a closed subset of affine space.  Thus, given a nested sequence of associated primes $\pp_0\subseteq \pp_1 \subseteq \dots \subseteq \pp_m$ of $M$, one would obtain a nested sequence of affine algebraic varieties   
\[
V(\pp_m)\subseteq \dots \subseteq V(\pp_1)\subseteq V(\pp_0)\subseteq ss(M) \, .
\]
This is the motivation behind our use of the term ``stratification'' in Definition \ref{D:stratification}.

\end{rmk}

We next generalize the definition of birth and death of generators as can e.g.\ be found in \cite{OPTGH}:

\begin{defn}\label{D:associated primes invariant}
Let $M$ be a finitely generated $\N^r$-graded module over
$S=\KK[x_1,\dots , x_r]$. We say that a homogeneous element $a$ of $M$ is
\define{born at $(u_1,\dots , u_r)\in \mathbb{N}^r$} if the degree of
$a$ is $(u_1,\dots , u_r)$ and $a$  is not in the image of any sum of maps
$\sum_\v x^\v$ for any $\v \prec \u$. 
If $\Ann a\ne (0)$ let $D\subset \N^r$ be the subset of $\N^r$
obtained from the set of degrees of the  set of minimal generators of
$\Ann a$ by adding to each degree the degree of $a$. Then we say that
$a$ \define{dies in degrees $D$}. The motivation for this is that
since $a$ is an element of the module $M$ which appears in degree
${\bf u}$, we have an inclusion of the principal, graded $S$-module
$Sa \simeq S(-{\bf u})/\Ann a \subseteq M$, via $1 \mapsto a$. Hence if the
minimal generators of $\Ann a$ lie in degrees $\{{\bf w_1},\ldots,
{\bf w_m}\}$, the minimal degrees where $Sa$ is $0$ are exactly $D$. If $\Ann a=(0)$ we say that $a$ \define{lives forever}.

Since $\sqrt{\Ann a}$ is the intersection of the minimal primes containing $a$ \cite{E}, if $\sqrt{\Ann a}=\langle x_{i_1}, \ldots,  x_{i_k}\rangle = \mathfrak{p}$, 
we say that \define{$a$ lives along $c_{\mathfrak{p}}\subset \mathbb{N}^r$}. In this case we say that $a$ has \define{support dimension $r-k$}. 
\end{defn}

Note that  an element has support dimension $r$ if and only if it lives forever. Similarly as for the support shape, if $a$ lives along $c_\pp$, then $c_\pp$ encodes information on where $a$ is non-zero in the module $M$ only up to translation and thickness. 

If $a \in M$ then $Ra \subseteq M$ so (see
Proposition~\ref{AssPrimes}) $\Ass{Ra} \subseteq \Ass M$. Therefore using Definition \ref{D:associated primes invariant} we can read off from the stratification of the support
shape whether there are elements of a certain support dimension in the
module, leading to the following definition:

\begin{defn}
We call elements of support dimension $0$ \define{transient components}, elements of support dimension $1\leq d< r$ \define{persistent components}, and elements of support dimension $r$ \define{fully persistent components}. 
\end{defn}

The rank of a module gives the minimal number of generators for the submodule of the module which is generated by the fully persistent components. On the other hand, we will  give a measure of the size of the module of the persistent components living along  $c_\pp$, for $\pp$ an associated prime of the module, in Section \ref{Lrank}.

If $M$ is an $\N$-graded module over $\KK[x]$ then for every $m\in M$ its annihilator $\Ann m$ is either $(0)$ or $(x^n)$ for some $n>0$. We thus have that  Definition \ref{D:associated primes invariant} 
is an equivalent formulation of the definition of birth and death of generators as is e.g.\ given in  \cite{OPTGH}, which is a correction of the standard definition that uses the so-called ``elder rule'', see \cite[Remark 5]{OPTGH} for more details.

\begin{exm}
For the modules in Example \ref{multiHS} we obtain the following: the module $M$ has two generators of support dimension $2$, of which one is born at $(1,1)$, while the other is born at $(2,2)$. On the other hand, the module $N$ has two generators of support dimension $2$, both born at (2,2); it has two of dimension $1$, of which one is born at $(2,1)$ and dies at $(2,2)$, while the other is born at $(1,2)$ and dies at $(2,2)$; finally, it has one generator of support dimension $0$, born at $(1,1)$, and which dies at $(1,2)$ and $(2,1)$. The generator of $N$ which is born at $(2,1)$ lives along $c_{\langle x_1 \rangle}$, while the generator of dimension $1$ born at $(1,2)$ lives along $c_{\langle x_2 \rangle}$.
\end{exm}

\begin{exm}For $r>1$ an element might die at more than one degree, as Example \ref{onecritWith2bdry} shows: the generator $\omega$ of $H_1(K)$ is born at $(0,0)$, and its annihilator is $(x_1^2,x_1x_2^2)$, which has radical $(x_1)$. Therefore $\omega$ dies at degrees $\{(2,0),(1,2)\}$, lives along $c_{x_2}$  and has support dimension $1$.
\end{exm}

\subsection{Persistent but not fully persistent components: the rank of $M$ along a coordinate subspace}\label{Lrank}
A first measure of the size of a module $M$ is the rank; when $M$ is
$\N^r$-graded the results of Section \ref{S:hilbert series} show the rank may be obtained from
the Hilbert series. The rank gives a measure of the number of fully
persistent components, and  in this section we analyze how to compute the size of the components of $M$ which are persistent but not fully
persistent. 

First, we recall some additional algebra:
\begin{defn}\label{H0}[\cite[Chapter 3.6]{E}]
Let  $S=\KK[x_1,\ldots, x_r]$. For a finitely generated  $S$-module $M$
and $I$ an ideal of $S$, the \define{zeroth local cohomology of $M$ with respect to $I$} is the  subset of $M$ defined as follows:
\begin{equation}
H^0_I(M) = \{ m \in M \mid I^n\cdot m = 0 \mbox{ for all } n \gg 0\} \, .
\end{equation}
\end{defn}

When $M$ is a finitely generated $\N^r$-graded 
$S=\KK[x_1,\ldots, x_r]$ module, Lemma~\ref{MPHASS} shows that 
any associated prime of $M$ is generated by a subset of the
variables. 
\begin{notation}
To simplify the notation and without loss of generality,
we assume for the remainder of this section that $\pp = \langle x_1,
\ldots, x_k\rangle \subseteq S=\KK[x_1,\dots , x_r]$, and
$S_{c_\pp}=\KK[x_{k+1},\dots , x_r] \cong S/\pp$. Thanks to the inclusion $S_{c_\pp}\hookrightarrow S$,
 an $S$-module also has the structure of an $S_{c_\pp}$-module.
\end{notation}
We first note the connection between Definition~\ref{H0} and 
Definition~\ref{D:associated primes invariant}. 

\begin{lem}~\label{LivesSubspaceH0}
For a
finitely generated $\N^r$-graded $S$-module $M$ and an associated prime  $\pp$ of $M$ we have:
\[
\{a \in M \mid a \mbox{ lives along } c_{\pp}\} \subseteq H^0_{\pp}(M).
\]
\end{lem}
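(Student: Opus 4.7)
The statement to prove is the inclusion $\{a \in M \mid a \text{ lives along } c_\pp\} \subseteq H^0_\pp(M)$, where $a$ lives along $c_\pp$ means $\sqrt{\Ann a} = \pp$, and membership in $H^0_\pp(M)$ requires that some power $\pp^n$ annihilates $a$. My plan is a direct unwinding of the two definitions, exploiting Lemma~\ref{MPHASS} to write $\pp = \langle x_{i_1}, \ldots, x_{i_k}\rangle$ explicitly.

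The key observation is that saying $\sqrt{\Ann a} = \pp$ is, for each generator $x_{i_j}$ of $\pp$, equivalent to the existence of a positive integer $n_j$ with $x_{i_j}^{n_j} \in \Ann a$, i.e.\ $x_{i_j}^{n_j} a = 0$. So first I would take an element $a$ that lives along $c_\pp$, choose such exponents $n_1, \ldots, n_k$, and set $N = \max_{j} n_j$.

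Next I would argue via a pigeonhole count that $\pp^m \cdot a = 0$ whenever $m \geq k(N-1)+1$. Indeed, $\pp^m$ is generated as an $S$-module by products $x_{i_{j_1}} x_{i_{j_2}} \cdots x_{i_{j_m}}$ of $m$ generators of $\pp$; for such $m$, at least one index must occur at least $N$ times, so the product is divisible by $x_{i_j}^N$, which divides $x_{i_j}^{n_j}$ into the product and therefore annihilates $a$. It follows that every generator of $\pp^m$ annihilates $a$, so $\pp^m a = 0$, which places $a$ in $H^0_\pp(M)$ as required.

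There is no real obstacle here: the argument is essentially just the fact that in a finitely generated ideal, containment of each generator in the radical of an annihilator implies containment of a sufficiently high power of the ideal itself. The only point to be careful with is to express this uniformly for the finite set of generators of $\pp$, which is exactly what the pigeonhole bound $m \geq k(N-1)+1$ accomplishes.
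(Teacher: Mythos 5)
Your proof is correct and follows essentially the same route as the paper's: both reduce to showing that $\pp \subseteq \sqrt{\Ann a}$ forces $\pp^m \cdot a = 0$ for some $m$, which is exactly membership in $H^0_\pp(M)$. The paper treats this implication as standard (it holds for any finitely generated ideal in a Noetherian ring), whereas you spell it out explicitly via the pigeonhole bound $m \geq k(N-1)+1$ on products of the generators $x_{i_1},\ldots,x_{i_k}$ of $\pp$ supplied by Lemma~\ref{MPHASS}; both are fine, and your version is self-contained. One small imprecision worth noting: your claim that $\sqrt{\Ann a}=\pp$ is \emph{equivalent} to each $x_{i_j}$ having some power in $\Ann a$ overstates it — that condition is equivalent only to the inclusion $\pp \subseteq \sqrt{\Ann a}$ — but since that inclusion is the only direction your argument actually uses, the proof is unaffected.
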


\begin{proof}
By Definition~\ref{D:associated primes invariant}, $a$ lives along
$c_{\pp}$ if and only if $\sqrt{Ann(a)} = \pp$. This implies that ${\pp}^n \cdot a = 0$ for some $n$, which is equivalent to  $a \in H^0_{\pp}(M)$. 
\end{proof}

The set $H^0_I(M)$ is easily seen to be a submodule of $M$. When  $M$ is finitely generated as an $S$-module, then so is $H^0_I(M)$. Furthermore we have:
\begin{lem}\label{H0truncation}
Let $M$ be a finitely  generated $\N^r$-graded $S$-module and let $\pp$ be an associated prime. Then $H^0_{\pp}(M)$ is a finitely generated $S_{c_\pp}$-module.
\end{lem}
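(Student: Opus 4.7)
The plan is to combine Noetherianness with the observation that $H^0_\pp(M)$ is annihilated by a single power of $\pp$, which allows us to ``trade'' generators over $S$ for generators over $S_{c_\pp}$ by absorbing powers of $x_1,\ldots,x_k$ into the generating set.

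First, since $S$ is Noetherian and $M$ is finitely generated as an $S$-module, $M$ is a Noetherian $S$-module, so the submodule $H^0_\pp(M)$ is finitely generated as an $S$-module; pick generators $m_1,\ldots,m_s$. By the defining property of $H^0_\pp(M)$, for each $i$ there is an integer $n_i$ with $\pp^{n_i}\cdot m_i = 0$. Setting $N = \max_i n_i$, we get $\pp^N\cdot H^0_\pp(M) = 0$, so $H^0_\pp(M)$ carries the structure of a module over $S/\pp^N$, still generated by $m_1,\ldots,m_s$ over this quotient.

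Next I would show that $S/\pp^N$ is finitely generated as a module over $S_{c_\pp} = \KK[x_{k+1},\ldots,x_r]$. Since $\pp = \langle x_1,\ldots,x_k\rangle$, the ideal $\pp^N$ is generated by all monomials in $x_1,\ldots,x_k$ of total degree $N$, and a straightforward $\KK$-basis argument (or the fact that $S = S_{c_\pp}[x_1,\ldots,x_k]$) shows that the finite set of monomials
\[
B = \{\, x_1^{a_1}\cdots x_k^{a_k} \,:\, a_1+\cdots+a_k < N \,\}
\]
generates $S/\pp^N$ as an $S_{c_\pp}$-module.

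Combining these two facts, $H^0_\pp(M)$ is generated as an $S_{c_\pp}$-module by the finite set $\{\,b\cdot m_i : b \in B,\ i=1,\ldots,s\,\}$: any element is an $S/\pp^N$-linear combination of the $m_i$, and each coefficient in $S/\pp^N$ is in turn an $S_{c_\pp}$-linear combination of elements of $B$. The only mildly subtle point is the first step, namely recognizing that finite generation over $S$ plus pointwise $\pp$-torsion forces a uniform annihilator $\pp^N$; once this is in hand, the rest is bookkeeping about how $S$ is built from $S_{c_\pp}$ by adjoining finitely many variables, each of whose contribution is bounded by $N$.
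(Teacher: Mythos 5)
Your proof is correct and follows essentially the same route as the paper: Noetherianness gives finite generation of $H^0_\pp(M)$ over $S$, each generator is killed by a bounded power of $x_1,\ldots,x_k$, and multiplying the $S$-generators by the finitely many small monomials in $x_1,\ldots,x_k$ yields finite generation over $S_{c_\pp}$. The only cosmetic difference is that you package the torsion bound as a uniform $N$ and factor through the finitely generated $S_{c_\pp}$-module $S/\pp^N$, whereas the paper works directly with per-generator, per-variable exponents $d_{ij}$; both are valid and amount to the same bookkeeping.
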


\begin{proof}
Since $M$ is finitely generated over $S$ and $S$ is Noetherian, 
the submodule $H^0_{\pp}(M)$ is also finitely generated as an $S$-module \cite[Proposition 1.4]{E}. 
Let $\{a_1, \ldots, a_m \}$ be homogeneous generators of $H^0_{\pp}(M)$ 
over $S$. By definition of local cohomology, for each $i$, 
$a_i$ is annihilated by $\pp^{n_i}$ for some $n_i \in \mathbb{N}$. 
Therefore for each $i \in \{1, \ldots, m \}$ and $j \in \{1, \ldots, k \}$ 
there exist $d_{ij} \in \N $ such that $x_j^{d_{ij}} \cdot a_i =0$. 
So the $S$-module generated by the $a_i$ is the same (as a vector
space) as the $S_{c_\pp}$-module generated by all ${\bf x}^{\bf \alpha} \cdot a_i$ such that 
${\bf \alpha}_j < d_{ij}$ for all $j \in \{1, \ldots, k\}$. 
\end{proof}

\begin{defn}\label{DefnLrank}
For a finitely generated $\N^r$-graded $S$-module $M$ and an associated prime ideal $\pp=\langle x_1,\dots , x_r \rangle$ of $M$ we define the \define{$c_\pp$-rank} of $M$ to be  the rank 
of $H^0_{\pp}(M)$ as an $S_{c_\pp}$-module.
\end{defn}

Intuitively, the  $c_\pp$-rank of an $\N^r$-graded module $M$ gives information about the size of the persistent components of $M$ that live along $c_\pp$; it is thus a count of the ways that the module goes to infinity in the directions orthogonal to the coordinate subspace spanned by $x_1,\dots , x_k$. In the following we make the previous statement precise:
\begin{prop}
Let $\pp$ be an associated prime of $M$ with dimension contained strictly  between $0$ and $r$.  The module $M$ has the structure of an $S_{c_\pp}$-module by restriction of scalars, and for a subset $U\subset M$ we denote by $\langle U \rangle$ the $S_{c_\pp}$-submodule of $M$ generated by $U$. We have:

\begin{equation}\label{E:cp-rank}
\rk_{S_{c_\pp}}\Big (\big \langle \left\{a\in M \mid a \text{ lives along }c_\pp \right\}\big \rangle \Big )=\rk_{S_{c_\pp}}(H^0_\pp(M))\, .
\end{equation}
\end{prop}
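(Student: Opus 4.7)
The plan is to dichotomize elements of $H^0_\pp(M)$ into those that live along $c_\pp$ and those that are $S_{c_\pp}$-torsion, and then combine this with additivity of rank over the integral domain $S_{c_\pp}$. Let $L$ denote the $S_{c_\pp}$-submodule on the left-hand side of \eqref{E:cp-rank}. By Lemma~\ref{LivesSubspaceH0} the generating set of $L$ lies in $H^0_\pp(M)$, so $L\subseteq H^0_\pp(M)$, and from the short exact sequence
\[
0\to L\to H^0_\pp(M)\to H^0_\pp(M)/L\to 0
\]
together with additivity of rank over the integral domain $S_{c_\pp}$, it suffices to show that $H^0_\pp(M)/L$ is $S_{c_\pp}$-torsion.

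Since $\pp$ is a homogeneous (monomial) ideal, $H^0_\pp(M)$ is a graded submodule of $M$ and is thus generated as an $S_{c_\pp}$-module by its homogeneous elements. I will therefore prove the following dichotomy: every homogeneous $a\in H^0_\pp(M)$ either lives along $c_\pp$ (hence lies in the generating set of $L$) or is $S_{c_\pp}$-torsion. For such an $a$ we have $\pp\subseteq \sqrt{\Ann a}$, and by Corollary~\ref{MPHASS} the radical is the intersection $\pp_1\cap\dots\cap\pp_s$ of minimal primes over $\Ann a$, each generated by a subset of the variables. If $\pp$ is itself one of the $\pp_i$ then, by minimality, $\pp$ is the unique minimal prime and $\sqrt{\Ann a}=\pp$, so $a$ lives along $c_\pp$ by Definition~\ref{D:associated primes invariant}. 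Otherwise every $\pp_i$ strictly contains $\pp$, and hence each $\pp_i$ contains some variable $x_{j_i}$ with $j_i>k$. The product $s=\prod_{i=1}^s x_{j_i}$ is a nonzero element of $S_{c_\pp}$ that lies in every $\pp_i$, hence in $\sqrt{\Ann a}$, so $s^n\cdot a=0$ for some $n\geq 1$, showing $a$ is $S_{c_\pp}$-torsion.

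From the dichotomy we obtain $H^0_\pp(M)=L+T$, where $T$ denotes the $S_{c_\pp}$-torsion submodule of $H^0_\pp(M)$. Consequently
\[
H^0_\pp(M)/L=(L+T)/L\cong T/(T\cap L),
\]
which is a quotient of a torsion module and therefore $S_{c_\pp}$-torsion, so has $S_{c_\pp}$-rank zero. Combined with the additivity of rank in the exact sequence above, this gives $\rk_{S_{c_\pp}}(L)=\rk_{S_{c_\pp}}(H^0_\pp(M))$, as required.

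The main obstacle is the second case of the dichotomy, i.e.\ producing a single nonzero element of $S_{c_\pp}$ that kills a power of any homogeneous $a\in H^0_\pp(M)$ with $\sqrt{\Ann a}\supsetneq\pp$. This is exactly where the multigraded structure is crucial: because homogeneous primes in $S$ are generated by subsets of the variables (Lemma~\ref{L:hom prime}), strict containment $\pp_i\supsetneq\pp$ forces the presence of a variable outside $\pp$, and multiplying over the finitely many minimal primes yields an element lying in $\sqrt{\Ann a}\cap S_{c_\pp}$.
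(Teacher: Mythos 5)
Your proof is correct and follows essentially the same route as the paper's: the key step in both is to show that any homogeneous $a \in H^0_\pp(M)$ not living along $c_\pp$ has $\sqrt{\Ann a}$ strictly containing $\pp$, hence (by the monomial form of homogeneous primes) contains a variable lying in $S_{c_\pp}$, so $a$ is $S_{c_\pp}$-torsion and contributes nothing to rank. Where you improve on the paper is in rigor: the paper asserts informally that such $a$ ``does not contribute to the rank,'' while you make this precise by forming the short exact sequence $0\to L\to H^0_\pp(M)\to H^0_\pp(M)/L\to 0$, invoking additivity of rank over the integral domain $S_{c_\pp}$, observing that $H^0_\pp(M)$ is graded so it suffices to treat homogeneous elements, and concluding $H^0_\pp(M)/L\cong T/(T\cap L)$ is torsion; you also unify the paper's two subcases ($\sqrt{\Ann a}$ prime vs.\ not prime) into a single argument via the product $\prod_i x_{j_i}$.
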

\begin{proof}
By Lemma \ref{LivesSubspaceH0} we have that
\[
\rk_{S_{c_\pp}}\Big (\big \langle \left\{a\in M \mid a \text{ lives along }c_\pp \right\}\big \rangle \Big )\leq \rk_{S_{c_\pp}}(H^0_\pp(M))\, ,
\]
\noindent
 hence the claim follows once we show  that the elements of $H^0_\pp(M)$ that do not live along $c_\pp$ do not contribute to the rank of $H^0_\pp(M)$ as an $S_{c_\pp}$-module.

Let $a\in H^0_\pp(M)$ be such that 
 $\sqrt{\Ann a}\ne \pp$.
Note that $a\in H_\pp^0(M)$ if and only if $\pp^n \cdot a = 0$, and
this is equivalent to ${\pp\subseteq \sqrt{\Ann a}}$.  If $\sqrt{\Ann
  a}$ is a prime, say $\pp'$, then $a$ lives along $c_{\pp'}$ with
$\pp'\supset \pp$, and thus $a$ does not contribute to the rank of
$H^0_\pp(M)$ as an $S_{c_\pp}$-module, since $Sa \cong S/{\Ann a}$
has zero rank as an $S_{c_\pp}$-module (indeed, for any integral domain $R$
and nonzero ideal $I$, the rank of $R/I$ as an $R$-module is zero).
as it lives along a coordinate subspace of dimension strictly less than $k$. Now assume that $\sqrt{\Ann a}$ is not a prime. Then we can write $\sqrt{\Ann a}=\qq_1\cap \dots \cap \qq_s$, with $\qq_1,\dots , \qq_s$ associated primes of $S/\sqrt{\Ann a}$. We must have that $\qq_i\ne \pp$, as otherwise $\sqrt{\Ann a}=\pp$. Hence $\pp\subset \qq_i$ for all $i=1,\dots , s$. Thus the support of $a$ is a union of coordinate subspaces that have all dimension strictly less than that of $c_\pp$, and therefore $a$ cannot contribute to the rank of $H^0_\pp(M)$ as an $S_{c_\pp}$-module. 
\end{proof}


 \begin{exm}\label{E:local homology}
Consider the modules   $M_1=S(-1,-2)/x_1$, $M_2=S/x_1^2$, and $M_3=S/x_1\oplus S/x_1$ from Fig.~\ref{F:trans thick mult}. Using the interpretation of the $c_{x_1}$-rank as the number of ways of going to infinity along the $x_2$-axis --- given by the left-hand side of Equation \eqref{E:cp-rank} ---  one can read  off  the $c_{x_1}$-rank from Fig.~\ref{F:trans thick mult}. On the other hand, using {\tt Macaulay2} one can compute this as the  rank of the local cohomology module. We have: 
 \begin{alignat}{2}
 \rk_{S_{c_{x_1}}}(H^0_{\langle x_1 \rangle }(M_1))\notag &=1\\
  \rk_{S_{c_{x_1}}}(H^0_{\langle x_1 \rangle }(M_2))\notag &=2\\
   \rk_{S_{c_{x_1}}}(H^0_{\langle x_1 \rangle }(M_3))\notag &=2 \, .
 \end{alignat}

 \end{exm}

\begin{lem}\label{HSVP}
For a 
finitely generated $\N^r$-graded $S$-module $M$ and an associated prime $\pp$ we have
\[
HS(H^0_{\pp}(M), {\bf t}) = \frac{P(t_1,\ldots,
  t_r)}{\prod_{i=k+1}^r(1-t_i)}.
  \]
The variables $t_1,\ldots,t_k$ are needed in numerators to account for degree shifts in birth
degrees of generators for $H^0_{\pp}(M)$. 
\end{lem}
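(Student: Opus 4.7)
The plan is to deduce the statement by building a graded free resolution of $H^0_{\pp}(M)$ over the smaller ring $S_{c_\pp} = \KK[x_{k+1},\ldots,x_r]$ and computing the Hilbert series additively from it.

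By Lemma~\ref{H0truncation}, $H^0_{\pp}(M)$ is a finitely generated module over $S_{c_\pp}$. It remains $\N^r$-graded: the generators $a_1,\ldots,a_m$ are homogeneous of some degrees $\v_i \in \N^r$, and the monomials ${\bf x}^{\alpha} a_i$ with $\alpha_j < d_{ij}$ for $j \le k$ (as constructed in the proof of Lemma~\ref{H0truncation}) span $H^0_{\pp}(M)$ as an $S_{c_\pp}$-vector space while each carrying a well-defined degree in $\N^r$. Since $S_{c_\pp}$ is a polynomial ring in $r-k$ variables, the Hilbert Syzygy Theorem (Theorem~\ref{syzygy thm}) applied to $S_{c_\pp}$ yields a finite $\N^r$-graded free resolution
\[
0 \longrightarrow G_s \longrightarrow \cdots \longrightarrow G_1 \longrightarrow G_0 \longrightarrow H^0_{\pp}(M) \longrightarrow 0,
\]
where each $G_i$ is a finite direct sum of shifted copies $S_{c_\pp}(-{\bf u}_{ij})$ with ${\bf u}_{ij} \in \N^r$.

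The key computation is the multigraded Hilbert series of a single shifted free summand $S_{c_\pp}(-{\bf u})$ with ${\bf u}=(u_1,\ldots,u_r)\in \N^r$. By definition $(S_{c_\pp}(-{\bf u}))_{\bf v}=(S_{c_\pp})_{{\bf v}-{\bf u}}$, and the latter is one-dimensional precisely when $v_i = u_i$ for all $i \le k$ and $v_i \ge u_i$ for all $i > k$, and zero otherwise. Summing the corresponding monomials gives
\[
HS(S_{c_\pp}(-{\bf u}),{\bf t}) = \frac{{\bf t}^{\bf u}}{\prod_{i=k+1}^{r}(1-t_i)}.
\]
In particular, the factors $t_1^{u_1}\cdots t_k^{u_k}$ appear purely as degree-shift data from the birth degrees in the first $k$ coordinates, while the genuine denominators $(1-t_i)$ occur only for $i > k$, matching the remark in the statement.

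Applying Hilbert series additivity along the exact sequence above (as in Equation~\eqref{HSeqn4}, but over $S_{c_\pp}$) gives
\[
HS(H^0_{\pp}(M),{\bf t}) = \sum_{i=0}^{s}(-1)^i \sum_{j} \frac{{\bf t}^{{\bf u}_{ij}}}{\prod_{\ell=k+1}^{r}(1-t_\ell)} = \frac{P(t_1,\ldots,t_r)}{\prod_{\ell=k+1}^{r}(1-t_\ell)},
\]
with $P(t_1,\ldots,t_r) = \sum_{i,j}(-1)^i {\bf t}^{{\bf u}_{ij}} \in \Z[t_1,\ldots,t_r]$. The only subtle point is ensuring that $H^0_{\pp}(M)$ really admits an $\N^r$-graded (not just $\N^{r-k}$-graded) free resolution over $S_{c_\pp}$, which follows because the chosen generators are homogeneous of well-defined $\N^r$-degree and the Noetherian argument producing successive syzygies preserves this grading.
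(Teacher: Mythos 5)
Your proof is correct and follows essentially the same route as the paper: invoke Lemma~\ref{H0truncation} to get finite generation of $H^0_{\pp}(M)$ over $S_{c_\pp}$, and then conclude that its multigraded Hilbert series has denominator $\prod_{i=k+1}^r(1-t_i)$. The paper leaves this last step as "the equality follows," whereas you spell it out by taking a finite $\N^r$-graded free resolution over $S_{c_\pp}$, computing $HS(S_{c_\pp}(-\u),\t) = \t^\u/\prod_{i>k}(1-t_i)$, and using additivity of Hilbert series; this is exactly the elaboration the authors had in mind.
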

\begin{proof}
By Lemma~\ref{H0truncation}, $H^0_{\pp}(M)$ has finite rank as an
$S_{c_\pp}$-module, and the equality follows.
\end{proof}
This allows us to find the $c_\pp$-rank from the Hilbert series of
$H^0_{\pp}(M)$: arguments parallel to those used in Section \ref{SS:HS}
show that the $c_\pp$-rank of $M$ is given by $P({\bf 1})$
with $P$ as
in Lemma~\ref{HSVP}. 

\begin{exm}
Let $M=S(-2,-2)\oplus S(-1,-1)/x_1^2\oplus S(-1,-2)/x_1$ be the module from Example~\ref{FirstEx}. We have:
\begin{alignat}{2}
HS(H^0_{\langle x_1\rangle}(M),{\bf t}) \notag &= \frac{(t_1t_2-t_1^3t_2)+(t_1^2t_2-t_1^2t_2^2)}{(1-t_2)(1-t_1)} \\
&\notag= \frac{t_1^2t_2+2t_1t_2^2}{1-t_2}+t_1t_2
\end{alignat}
and evaluating the numerator $t_1^2t_2+2t_1t_2^2$ at ${\bf t}={\bf 1}$
yields that the
rank of $H^0_{\langle x_1\rangle}(M)$ as an $S_{c_{\langle x_1\rangle }}$-module is three. 
\end{exm}


\subsection{Syzygies and the rank invariant}\label{2nd sz and rank}

The invariants that we proposed in 
Section \ref{SS:embedded primes} give information about non-trivial syzygies: if a module has an associated prime of dimension $r-c$, then it also has non-trivial syzygies of order $c$ 
 (see Theorem 8.3.2 in \cite{S}). However, the converse is not true in general, in  Example \ref{E:lesnick wright}   we have a module with non-trivial syzygies of order $1$ that does not have associated primes of the corresponding dimension.
Another invariant that fails to distinguish between the modules of Example \ref{E:lesnick wright} is the  rank invariant,   an 
 invariant for multiparameter persistence modules  introduced by Carlsson and Zomorodian in \cite{CZ}, which is equivalent to the barcode in the one-parameter case  \cite[Theorem 12]{CZ}:
\begin{defn}\label{CZrk} Let $M$ be a multigraded module over
  $\KK[x_1,\dots , x_r]$. For a pair $\u \pred \v \in \N^r$, the \define{
    rank invariant} $\rho_M(\u,\v)$ is the rank of the map $M_\u
  \stackrel{\cdot x^{\v - \u}}{\longrightarrow} M_\v$, so 
\[
\rho_M(\u,\v)=\dim_\KK M_\u- \dim_\KK \ker(\cdot x^{{\v - \u}})_{\u} \, .
\]
\end{defn}

In \cite{LW} Lesnick and Wright give an example of two non-isomorphic modules that have the same rank invariant:

\begin{exm}\label{E:lesnick wright} 
\cite[Example 2.2]{LW}\label{Example LW}
The $\N^2$-graded $S=\KK[x_1,x_2]$ modules 
\[
\begin{array}{ccc}
N & = & S(-1,0) \oplus S(0,-1)\\
M & = & (S(-1,0) \oplus S(0,-1)\oplus S(-1,-1))/\im(x_2,-x_1,0)^t)
\end{array}
\]
In $M$ we have glued the two copies of $S$ in $N$ together where they overlap and added a free copy of $S$.
These modules have the same rank invariant; they also have the same associated primes, namely just the zero ideal.
\end{exm}

Lemma \ref{L:rankHF} allows us to give an interpretation of the rank invariant 
$\rho_M({\bf
   u},{\bf v})$  as a standard algebraic 
entity:

\begin{cor}\label{C:convergence}
For ${\bf u} \gg {\bf 0}$, the rank invariant $\rho_M({\bf u},{\bf v})$ is $\rk(M)$.
\end{cor}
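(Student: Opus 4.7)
The plan is to apply Lemma~\ref{L:rankHF} twice: first to $M$ itself, and then to its torsion submodule. By Lemma~\ref{L:rankHF} we have $\dim_\KK M_{\bf u} = HF(M,{\bf u}) = \rk_S(M)$ for ${\bf u} \gg {\bf 0}$, and since $\v \succcurlyeq \u$ forces $\v \gg {\bf 0}$ as well, also $\dim_\KK M_{\bf v} = \rk_S(M)$. Because the rank of the multiplication map $\cdot {\bf x}^{{\bf v}-{\bf u}} \colon M_{\bf u} \to M_{\bf v}$ is bounded above by $\dim_\KK M_{\bf u}$, it suffices to show that this map is injective once ${\bf u}$ (and hence ${\bf v}$) is large enough.

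To get injectivity I would pass to the torsion submodule $T = T(M) = \{m \in M : \Ann{m} \neq (0)\}$. Because $S$ is Noetherian and $M$ is finitely generated, $T$ is itself a finitely generated $\N^r$-graded submodule of $M$. Moreover, localizing at the zero ideal kills every torsion element, so $\rk_S(T) = 0$. Applying Lemma~\ref{L:rankHF} to $T$ therefore gives $T_{\bf u} = 0$ for ${\bf u} \gg {\bf 0}$, and the same for ${\bf v}$. Hence $M_{\bf u} = (M/T)_{\bf u}$ and $M_{\bf v} = (M/T)_{\bf v}$ in this range, and since $M/T$ is torsion-free, multiplication by the non-zero monomial ${\bf x}^{{\bf v}-{\bf u}}$ is injective on $M/T$. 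This forces the map on the $M_{\bf u}$ component to be injective, so its rank equals $\dim_\KK M_{\bf u} = \rk_S(M)$, which is exactly $\rho_M({\bf u},{\bf v})$.

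There is no serious obstacle here; the only point that deserves explicit mention is that Lemma~\ref{L:rankHF} is strong enough to yield the vanishing $T_{\bf u} = 0$ in high degree, once one observes that the torsion submodule is finitely generated and has rank zero. This is what converts the a priori statement ``$\dim_\KK M_{\bf u} = \rk_S(M)$'' into the sharper ``$M_{\bf u}$ is already the torsion-free quotient'' that makes the multiplication map injective.
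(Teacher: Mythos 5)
Your proof is correct and is in fact more careful than the paper's own argument. The paper's proof, after invoking Lemma~\ref{L:rankHF} to get $\dim_\KK M_{\bf u} = \rk(M)$ for ${\bf u} \gg {\bf 0}$, simply asserts that the map $M_{\bf u} \to M_{\bf v}$ ``is a square matrix with $x^{\bf v - u}$ on the diagonal, so has full rank'' --- i.e.\ it takes for granted that $M$ behaves like a free module in high degree without justifying it. Your argument supplies exactly that missing justification: you pass to the torsion submodule $T(M)$, note that $\rk_S T(M) = 0$ since localizing at $(0)$ kills torsion, apply Lemma~\ref{L:rankHF} a second time to conclude $T(M)_{\bf u} = 0$ for ${\bf u} \gg {\bf 0}$, hence $M_{\bf u} \cong (M/T(M))_{\bf u}$, and finally use torsion-freeness of $M/T(M)$ to get injectivity of multiplication by ${\bf x}^{\bf v - u}$. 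You thus reach the same conclusion by the same opening move (Lemma~\ref{L:rankHF}) but actually carry out the bookkeeping the paper elides. The one point worth stating explicitly --- you gesture at it but do not prove it --- is that $T(M)$ is indeed an $\N^r$-graded submodule; this holds by the standard argument using a monomial order (a total order on $\N^r$ compatible with addition, as in the paper's Proposition~\ref{P:prim dec}): if $rm = 0$ with $r \neq 0$, compare lowest-degree homogeneous components to see that the lowest component of $m$ is already torsion, then induct. Finite generation of $T(M)$ then follows from Noetherianity. With that observation your proof is complete and self-contained.
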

\begin{proof}
By Lemma \ref{L:rankHF} we have that $\rk(M)$ = $\dim_{\KK}M_{\bf
   a}$ for ${\bf a} \gg {\bf 0}$; once ${\bf u}$ is in this
range, the map from $M_{\bf u}$ to $M_{\bf v}$ is a square matrix with
$x^{\bf v - u}$ on the diagonal, so has full rank equal to
$\dim_{\KK}M_{\bf u}$.
\end{proof}

\begin{prop}\label{rankAssP}
If $\ker(\cdot x^{\bf v-u})_{\bf u} \ne 0$ then for some $x_i$
dividing $x^{{\v - \u}}$,  $x_i \in \pp$ for some $\pp \in Ass(M)$.
\end{prop}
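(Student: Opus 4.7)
The plan is to exploit the standard characterization of zerodivisors in terms of associated primes. First I would pick a nonzero homogeneous element $m \in \ker(\cdot x^{\v-\u})_{\u}$, which is possible since the kernel is a graded submodule of $M_{\u}$ and is assumed nonzero. By definition of the kernel, $x^{\v-\u} \cdot m = 0$, so $x^{\v-\u}$ annihilates $m$ and in particular $x^{\v-\u}$ is a zerodivisor on $M$.

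Next I would invoke the classical commutative algebra fact that, for a finitely generated module $M$ over a Noetherian ring $R$, the set of zerodivisors on $M$ equals the union $\bigcup_{\pp \in \Ass(M)} \pp$ (see for instance \cite[Theorem 3.1]{E}). Since $S = \KK[x_1,\ldots,x_r]$ is Noetherian and $M$ is finitely generated, this applies here and yields some $\pp \in \Ass(M)$ with $x^{\v-\u} \in \pp$.

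Finally, since $\pp$ is prime and $x^{\v-\u} = \prod_{i=1}^r x_i^{v_i - u_i}$, primality forces $x_i \in \pp$ for at least one index $i$ with $v_i - u_i > 0$, i.e.\ for some $x_i$ dividing $x^{\v-\u}$. This gives exactly the desired conclusion. The argument is essentially a one-line application of the zerodivisor/associated prime correspondence once the correct element $m$ is extracted, so no serious obstacle is anticipated; the only care needed is to ensure that we can choose $m$ to be homogeneous (which is immediate, since $M_\u$ is itself a homogeneous component) and to correctly cite the zerodivisor theorem in the graded, finitely generated setting we are working in.
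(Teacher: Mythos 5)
Your proof is correct and follows essentially the same route as the paper: extract a nonzero annihilated element, apply the zerodivisor/associated-prime correspondence (the paper cites this via its Proposition on associated primes, you via Eisenbud Theorem 3.1), and conclude. The only cosmetic difference is that in the final step you use primality of $\pp$ directly to extract a variable $x_i \in \pp$ dividing $x^{\v-\u}$, whereas the paper leans on the fact (established in its Corollary on associated primes of $\N^r$-graded modules) that $\pp$ is generated by a subset of the variables; both are valid and equally short here.
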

\begin{proof}
\[
\begin{array}{ccc}
\ker(\cdot x^{{\v - \u}})_{\u} \ne 0 & \Leftrightarrow&  \exists 0 \ne a \in M_{\u} 
\mbox{ with }  x^{{\v - \u}} \cdot a = 0\\
& \Leftrightarrow & x^{\v - \u} \in \Ann{a}\\
& \Rightarrow & x^{\v - \u} \in \pp \in \Ass M \text{ for some } \pp\\
& \Leftrightarrow & x_i | x^{\v - \u} \mbox{ for some } x_i \in \pp \in \Ass M.
\end{array}
\]
\end{proof}

\subsection{Computation of associated primes}

Many of the results of this paper rely on the multigraded
nature of multiparameter persistence modules. By making use of the fact that any multiparameter persistence module $H_i(K)$
can be realized as the homology of a graded chain complex associated to the filtered complex $K$, we can also simplify the
computation of the associated primes. In particular, it is possible to
determine the associated primes of $H_i(K)$ without ever computing the
kernel of the $i$-th differential $d_i$, which is the homomorphism of $\N^r$-graded modules introduced in Equation \eqref{E:differential}.

In the following we  need two facts about
the associated primes: they behave well on short exact sequences, and they are closely related to the annihilator of $M$ (see Theorem 3.1 and Lemma 3.6 of \cite{E}):
\begin{prop}\label{AssPrimes}
For a short exact sequence of finitely generated $S$-modules
\[
0 \longrightarrow N \longrightarrow M \longrightarrow P \longrightarrow 0, \mbox{ we have }
\]
\begin{enumerate}
\item \label{item:ass ses}$\Ass N \subseteq \Ass M \subseteq \Ass N  \bigcup \Ass P$.
\item $\bigcup\limits_{\pp \in \Ass M}\pp = \left\{s \in S \mid s \mbox{ is a zero divisor on M}\right\}$.
\item $\Ass M$ is a finite nonempty set of primes, each containing $\Ann{M}$, and includes all primes minimal over $\Ann{M}$.
\end{enumerate}
\end{prop}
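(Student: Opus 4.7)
My plan is to prove the three parts in an order that lets each build on the previous, starting with a foundational observation: for any nonzero finitely generated $S$-module $M$, the collection $\mathcal{F}_M := \{\Ann(m)\mid 0\ne m\in M\}$ has maximal elements (by Noetherianness of $S$), and any such maximal element is prime. This is a short direct check: if $ab\in \Ann(m)$ but $b\notin \Ann(m)$, then $bm\ne 0$ and $\Ann(bm)\supseteq \Ann(m)+(a)$, so maximality forces $\Ann(bm)=\Ann(m)$, whence $a\in\Ann(m)$. In particular $\Ass M$ is nonempty for any nonzero $M$.

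With this in hand, (2) is essentially immediate. The inclusion $\supseteq$ is automatic: $s\in \pp=\Ann(m)$ with $m\ne 0$ makes $s$ a zero divisor. For $\subseteq$, given a zero divisor $s$ with $sm=0$ for some $0\ne m$, the submodule $(0:_M s)$ is nonzero, and any maximal element of $\mathcal{F}_{(0:_M s)}$ is prime by the foundational lemma, is an associated prime of $M$ (the element witnessing it lies in $M$), and contains $s$ since $s$ annihilates all of $(0:_M s)$. The statement in (3) that every $\pp\in \Ass M$ contains $\Ann M$ is immediate from the definition.

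For (1), the inclusion $\Ass N\subseteq \Ass M$ is direct because the injection $N\hookrightarrow M$ preserves annihilators of elements. For $\Ass M\subseteq \Ass N\cup \Ass P$, given $\pp=\Ann(m)\in \Ass M$, I examine the cyclic submodule $Sm\cong S/\pp$ and split on whether $Sm\cap N=0$. If so, $Sm$ injects into $P$ and $\pp$ is realized as the annihilator of the image, hence $\pp\in \Ass P$. Otherwise, choose $0\ne sm\in Sm\cap N$; since $S/\pp$ is a domain and $s\notin\pp$, the annihilator of $sm$ in $S$ equals $\pp$, placing $\pp\in \Ass N$.

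Finally, for the finiteness and minimal-primes claims in (3), I would build a prime filtration of $M$: using Noetherianness, construct a strictly increasing chain $0=M_0\subsetneq M_1\subsetneq \cdots\subsetneq M_n=M$ whose successive quotients satisfy $M_i/M_{i-1}\cong S/\pp_i$ for some prime $\pp_i$, where each $\pp_i$ arises from applying the foundational lemma to $M/M_{i-1}$. Iterating (1) on the short exact sequences $0\to M_{i-1}\to M_i\to S/\pp_i\to 0$ and using $\Ass(S/\pp_i)=\{\pp_i\}$ gives $\Ass M\subseteq\{\pp_1,\dots,\pp_n\}$, proving finiteness. The main obstacle I anticipate is the final assertion that every prime $\qq$ minimal over $\Ann M$ lies in $\Ass M$: the natural approach is a localization argument, replacing $S$ by $S_\qq$ and $M$ by $M_\qq$, observing that associated primes behave well under localization (they pull back to exactly those associated primes contained in $\qq$) and that in the local ring $S_\qq$ the maximal ideal $\qq S_\qq$ must be associated to $M_\qq$ because $\Ann(M_\qq)$ is $\qq S_\qq$-primary in this minimal case.
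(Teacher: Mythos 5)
The paper does not prove this proposition; it is cited directly from Eisenbud (Theorem~3.1 and Lemma~3.6 of \cite{E}), and your argument is a correct reproduction of that standard textbook proof---the maximal-annihilator lemma, the case split on $Sm\cap N$ for part~(1), and the prime filtration for finiteness are all exactly the expected steps. The one place you leave as a sketch, that associated primes localize well (which you need to place the minimal primes of $\Ann M$ in $\Ass M$), is indeed the intended route (Eisenbud, Theorem~3.1(c)) and is a valid step given the Noetherian hypothesis, so there is no genuine gap.
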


\begin{prop}\label{FittZoning1}
We have
\[
\Ass{\coker(d_{i+1})} = \begin{cases}
\Ass{H_i(K)} \cup \{(0)\}\, ,  &\mbox{if  $H_i(K)$  has no torsion free
  submodule} \\ &\mbox{but $\coker(d_{i+1})$ does.} \\
\Ass{H_i(K)}\, , &\mbox{otherwise}.
\end{cases}
\]
\end{prop}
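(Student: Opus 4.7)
The plan is to deduce this from the standard short exact sequence that compares homology with the cokernel, combined with item~\eqref{item:ass ses} of Proposition~\ref{AssPrimes}. The main observation is that everything hinges on whether or not $(0)$ is an associated prime, because $\im(d_i)$ sits inside the free module $C_{i-1}(K)$ and so contributes at most $(0)$ to the associated primes.

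First I would build the short exact sequence
\[
0 \longrightarrow H_i(K) \longrightarrow \coker(d_{i+1}) \longrightarrow \im(d_i) \longrightarrow 0
\]
of finitely generated $\N^r$-graded $S$-modules. This follows directly from the definitions $H_i(K) = \ker(d_i)/\im(d_{i+1})$ and $\coker(d_{i+1}) = C_i(K)/\im(d_{i+1})$, together with the first isomorphism theorem $C_i(K)/\ker(d_i) \cong \im(d_i)$. Next, since $C_{i-1}(K)$ is a free $\N^r$-graded $S$-module (the free module generated in appropriate degrees by the $(i-1)$-simplices of $K$), its submodule $\im(d_i)$ is torsion-free. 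Any nonzero finitely generated torsion-free module over the integral domain $S$ has $\Ass = \{(0)\}$, and the empty set otherwise; in either case $\Ass(\im(d_i)) \subseteq \{(0)\}$.

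Applying Proposition~\ref{AssPrimes}\eqref{item:ass ses} to the sequence above then gives
\[
\Ass(H_i(K)) \;\subseteq\; \Ass(\coker(d_{i+1})) \;\subseteq\; \Ass(H_i(K)) \cup \{(0)\}.
\]
So $\Ass(\coker(d_{i+1}))$ equals $\Ass(H_i(K))$ or $\Ass(H_i(K)) \cup \{(0)\}$, and the question reduces to determining when $(0)$ belongs to which set. Here I would record the standard fact that, over any commutative ring, $(0) \in \Ass(M)$ if and only if $M$ contains a nonzero torsion-free submodule: given such a submodule $N$, any $0 \neq n \in N$ has $\Ann(n) = (0)$, so $Sn \cong S \hookrightarrow M$; conversely, $Sn \cong S$ is torsion-free.

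Finally, I would finish by a short case analysis. If $H_i(K)$ has a nonzero torsion-free submodule, then $(0) \in \Ass(H_i(K))$, and also $\coker(d_{i+1})$ inherits a torsion-free submodule from the inclusion $H_i(K) \hookrightarrow \coker(d_{i+1})$, so both sides of the two-sided containment contain $(0)$ and we obtain $\Ass(\coker(d_{i+1})) = \Ass(H_i(K))$. If instead $H_i(K)$ has no torsion-free submodule, then $(0) \notin \Ass(H_i(K))$, and the two possibilities for $\Ass(\coker(d_{i+1}))$ are distinguished exactly by whether $\coker(d_{i+1})$ itself possesses a nonzero torsion-free submodule, matching the two cases in the statement. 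There is no real obstacle in this argument; the only subtlety is remembering that "no torsion-free submodule" means torsion as a module, equivalently $(0) \notin \Ass$, and tracking this through the short exact sequence.
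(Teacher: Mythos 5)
Your argument follows essentially the same route as the paper: the same short exact sequence
\[
0 \longrightarrow H_i(K) \longrightarrow \coker(d_{i+1}) \longrightarrow \im(d_i) \longrightarrow 0,
\]
the same invocation of Proposition~\ref{AssPrimes}(1), and the same observation that $\im(d_i)$ lives in a free module and hence contributes at most $(0)$ to the associated primes. Your write-up is in fact more careful than the paper's in two respects: you make explicit the two-sided containment $\Ass(H_i(K)) \subseteq \Ass(\coker(d_{i+1})) \subseteq \Ass(H_i(K)) \cup \{(0)\}$ and spell out the case analysis governed by whether $(0)$ is associated (equivalently, whether a nonzero torsion-free submodule exists), which the paper leaves implicit; and you correctly place $\im(d_i)$ inside $C_{i-1}(K)$, where the paper's text has $C_{i+1}(K)$.

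There is one step you assert without justification: that $C_{i-1}(K)$ is a free $\N^r$-graded $S$-module. This is automatic only when the multifiltration is one-critical, so that each $(i-1)$-simplex has a unique entry degree and $C_{i-1}(K) = \bigoplus_{\sigma} S(-\deg\sigma)$. For a general multifiltered complex, a simplex may enter at several incomparable degrees and the chain module need not be free. The paper addresses this at the start of its proof by replacing $K$ with a one-critical multifiltered complex having isomorphic persistent homology (citing \cite{CSZ}), and only then assuming the chain modules are free. Adding that reduction makes your proof complete; the rest of the argument is sound.
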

\begin{proof} 
First, note that for any multifiltered simplicial complex there exists a  one-critical multifiltered simplicial complex  such that the respective persistent homology modules are isomorphic \cite{CSZ}. Given a one-critical multifiltration, the modules in its simplicial chain complex are free, and we thus make this assumption in the following.

We have the short exact sequence of multigraded $S$-modules
\begin{equation}\label{sesPres2}
0 \longrightarrow  H_i(K) \longrightarrow C_i(K)/\im(d_{i+1}) \longrightarrow C_i(K)/\ker(d_{i})  \longrightarrow 0.
\end{equation}
Furthermore, by the first isomorphism theorem for $S$-modules
$C_i(K)/\ker(d_{i}) \cong \im(d_i) \subseteq  C_{i+1}(K)$ with $C_{i+1}(K)$ a
free module over the integral domain $S$, so by Proposition~\ref{AssPrimes}(\ref{item:ass ses}), the
only associated prime for $\im(d_i)$ is the zero ideal; applying again
Proposition~\ref{AssPrimes}(\ref{item:ass ses}) to Equation~\ref{sesPres2} yields 
the result. 
\end{proof}

Thanks to Proposition \ref{FittZoning1}, we can compute the associated primes of the $i$th homology $H_i(K)=\ker d_i/\im d_{i+1}$ by computing the associated primes of the cokernel of $d_{i+1}$.

\section{From MPH to PH: restriction to a line and intersecting with the diagonal}\label{S:restriction to line}
A natural  line of approach to extract  information from multiparameter persistent homology is to associate an $\N$-graded module over a polynomial ring in one variable to an $\N^r$-graded module $M$ over $S=\KK[x_1,\dots , x_r]$, and then to study the barcode of the $\N$-graded module.

To our knowledge, the only approach to assign an $\N$-graded module to an $\N^r$-graded module that has been studied until now is the restriction of a module to a line; this approach was studied in  \cite{B+08,LW}, and furthermore in \cite{LW} the authors introduce a tool for the visualization of barcodes along such restrictions.
To define such restrictions to lines with  non-negative real slope, it is necessary to work in the more general setting of $\mathbb{R}^r$-graded modules (see \cite{LW} for details). 
In this section we  explain how to adapt this construction to the $\N^r$-grading, and we show  that the rank of a module can be computed as the rank of  the module restricted to a suitable line. 


Finally, we discuss the algebraic version of restricting a module to a line, and show that one can read off the rank of a module from its intersection with the diagonal. We conclude the section with an example of an MPH module for which the restriction to the diagonal line yields a module that is not isomorphic to the module obtained by intersecting with the diagonal.

\subsection{Restriction to a line}\label{SS:restriction line}

The restriction of a module to a line with non-negative slope was studied in  \cite{B+08,LW}.
We adapt this to  our setting, by associating an $\N$-graded module to an $\N^r$-graded module as follows. Given a  tuple $(\u,\u_0)\in \N^r\times \N^r$,   we define the following order-preserving map 
\[
l\colon \N\to \N^r\colon i\mapsto i{\bf u}+{\bf u_0} \, .
\]
Therefore, thinking of the module $M$ as a functor $M\colon \N^r\to \Vect_\KK$ (see Remark \ref{R:PH as functor}), we can define $M_l\colon \N\to \Vect_\KK$ as the following composition of functors:
\[
M_l=M\circ l \, .
\]
This associates an $\N$-graded module $M_l$ on the polynomial ring $\KK[x]$ to any $\N^r$-graded module $M$ on $\KK[x_1,\dots , x_r]$, where 
 the action of $x$ on $M_{{l}}$ is given by the action of  ${\bf x}^{\bf u}$ on $M$. 

\begin{prop}
Let $M$ be a finitely generated $\N^r$-graded $S$-module.
Let $(\u,\u_0)\in \N^r\times \N^r$, and let $l$ be the induced order-preserving map $\N\to \N^r$.
If $u_i>0$ for all $i$, we have:
\[
\rk_S(M)=\rk_{\KK[x]}(M\circ l)
\]
where $x={\bf x}^{\bf u}$.
\end{prop}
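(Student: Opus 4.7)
My plan is to combine the fact that $M_l := M \circ l$ is a finitely generated $\N$-graded $\KK[x]$-module (where $x = \x^{\u}$) with Lemma~\ref{L:rankHF}, which identifies the rank with the eventual value of the Hilbert function. Since every $u_j > 0$, the line escapes in every coordinate direction, so for $i$ large the value $l(i)$ lies in the stable region of $HF(M,-)$, giving $\dim_\KK M_{l(i)} = \rk_S(M)$. The same lemma applied over $\KK[x]$ will then identify this limit with $\rk_{\KK[x]}(M_l)$, and comparing the two forces equality.

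The first technical step is to verify that $M_l$ is finitely generated over $\KK[x]$. Picking homogeneous $S$-generators $g_1,\dots,g_n$ of $M$ of degrees $\v_1,\dots,\v_n$, positivity of each $u_j$ guarantees that for every $k$ there is a smallest index $i_k \in \N$ with $l(i_k) \succcurlyeq \v_k$. Setting $N := \max_k i_k$, the identity
\[
\x^{\,l(i) - \v_k}\, g_k \;=\; x^{\,i - i_k}\cdot \bigl(\x^{\,l(i_k)-\v_k}\, g_k\bigr) \quad \text{valid for } i \ge i_k
\]
(obtained from $l(i) - \v_k = (i-i_k)\u + (l(i_k) - \v_k)$) shows that the spanning elements $\x^{l(i) - \v_k} g_k$ of $M_{l(i)}$ for $i \ge N$ all lie in the $\KK[x]$-submodule generated by the finite-dimensional $\KK$-vector space $\bigoplus_{i \le N} M_{l(i)}$. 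This furnishes a finite generating set for $M_l$.

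The second step is the rank comparison. Applying Lemma~\ref{L:rankHF} to $M$ over $S$ yields $\w_0 \in \N^r$ with $\dim_\KK M_{\w} = \rk_S(M)$ for all $\w \succcurlyeq \w_0$; positivity of every $u_j$ then forces $l(i) \succcurlyeq \w_0$ for $i \gg 0$, so $\dim_\KK (M_l)_i = \rk_S(M)$ eventually. A second application of Lemma~\ref{L:rankHF}, now to the finitely generated $\N$-graded $\KK[x]$-module $M_l$, gives $\dim_\KK (M_l)_i = \rk_{\KK[x]}(M_l)$ for $i \gg 0$. Equating these two eventual values yields $\rk_{\KK[x]}(M_l) = \rk_S(M)$.

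The main obstacle I anticipate is the finite generation of $M_l$ as a $\KK[x]$-module; once that is established, the rest is a clean double application of Lemma~\ref{L:rankHF}. Note that the hypothesis $u_j > 0$ for every $j$ is used twice — to guarantee each $i_k$ exists (so that finitely many generators suffice) and to guarantee $l(i)$ eventually dominates $\w_0$ — and the conclusion genuinely fails without it, as one sees already with $\u = (1, 0, \dots, 0)$ and a module supported away from the $x_1$-axis.
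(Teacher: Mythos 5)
Your proof is correct and follows the same strategy as the paper's: apply Lemma~\ref{L:rankHF} to both $M$ over $S$ and to $M\circ l$ over $\KK[x]$, then equate the eventual values of the Hilbert functions along the line. You go one step further than the paper by explicitly verifying that $M\circ l$ is finitely generated over $\KK[x]$ --- a hypothesis of Lemma~\ref{L:rankHF} that the paper's proof invokes without comment --- so your version is actually the more complete argument.
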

\begin{proof}
Whenever     $j>>0$ we have 
that
$\rk_{\KK[x]}(M\circ l) =HF(M\circ l, j)$ by Lemma \ref{L:rankHF}. Furthermore, $HF(M\circ l, j)=\dim_\KK(M\circ l )_j$ by the definition of  Hilbert function, and this last term equals $\dim_\KK(M)_{{\bf u} j+{\bf u_0}}$.
If ${u}_i>0$ for all $i$, then $\dim_\KK(M)_{{\bf u} j+{\bf u_0}}=\dim_\KK(M)_{({v}_1,\dots , { v}_r)}$ with ${v}_i>>0$. Again by Lemma \ref{L:rankHF}, this last term equals $\rk_S(M)$. 

\end{proof}

More generally, given any order-preserving map $f\colon \N\to \N^r$, one can associate an $\N$-graded module $M_f:=M\circ f$ to an $\N^r$-graded module $M$;  thus one can probe the $S$-module $M$ with any monotonically increasing path in $\N^r$ and associate to it a barcode.

 \subsection{Intersecting with the diagonal}

In this subsection we discuss how one can intersect an $\N^r$-graded module with a specific line using methods from commutative algebra, and we show that the rank of the resulting module equals the rank of the original module.

The method of restricting a module that we briefly discussed in Section  \ref{SS:restriction line}  reflects the intuitive method of 
probing the structure of a topological space $X\subseteq \KK^r$ by intersecting 
$X$ with a low dimensional linear space; for example, with a line. 
The algebraic version of  intersecting with a geometric object $L = V(\pp)$ is 
the tensor product
\[
M|_L = M \otimes_S S/\pp\, .
\]

We intersect an $\N^r$-graded $S$-module $M$ with a line $L$ through the 
origin in $\KK^r$. For such a line, the defining 
ideal $\pp =\langle l_1, \ldots,l_{r-1}\rangle$ is generated by 
$r-1$ homogeneous (in the $\mathbb{N}$-grading) elements. 
Since 
\[
 S/\pp \simeq \KK[x],
\]
this means that  $M \otimes _S S/\pp$ is a $\KK[x]$-module, so 
$M \otimes_S S/\pp$ has a barcode. We will now show that in case that the line $L$ is the diagonal, 
the number of infinite intervals in the barcode of $M \otimes_S S/\pp$ is equal to $\rk_S(M)$.

\begin{thm}\label{RestrictLine}
If $M$ is a finitely generated $\N^r$-graded $S$-module with $r \ge 2$ and
$\pp  =\langle x_1-x_2, \ldots,x_{r-1}-x_r\rangle$, then
\[
\rk_S(M) = \rk_{S/\pp}(M\otimes _S S/\pp).
\]
\end{thm}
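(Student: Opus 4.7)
The plan is to reduce the statement to showing that the localization $M_\pp$ is free over the local ring $S_\pp$. First, $\pp = (x_1-x_2,\dots,x_{r-1}-x_r)$ is a prime of $S$ of height $r-1$ (since $S/\pp\cong\KK[x]$ is a domain), and $S_\pp$ is a regular local ring of dimension $r-1$ whose residue field is $\mathrm{Frac}(S/\pp) = \KK(x)$. The standard identifications
\[
M_\pp/\pp M_\pp = M\otimes_S(S_\pp/\pp S_\pp) = (M\otimes_S S/\pp)\otimes_{S/\pp}\KK(x)
\]
give $\rk_{S/\pp}(M\otimes_S S/\pp) = \dim_{\KK(x)} M_\pp/\pp M_\pp$, while $\rk_S(M) = \dim_Q M_\pp\otimes_{S_\pp} Q$ where $Q=\mathrm{Frac}(S)=\mathrm{Frac}(S_\pp)$. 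Hence if $M_\pp$ is a free $S_\pp$-module of rank $N$, both quantities equal $N$ and the theorem follows.

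To prove $M_\pp$ is free I would exploit the $\N^r$-grading to control a presentation. Choose an $\N^r$-graded presentation $F_1\xrightarrow{\phi}F_0\to M\to 0$ with $F_0=\bigoplus_i S(-\v_i)$ and $F_1=\bigoplus_j S(-\u_j)$. Each entry $\phi_{ij}$ is a homogeneous element of $S$ of pure multidegree $\u_j-\v_i$, hence of the form $c_{ij}\,x^{\u_j-\v_i}$ with $c_{ij}\in\KK$ (and $\phi_{ij}=0$ unless $\u_j\succeq\v_i$). Since each $x_s$ maps to the nonzero element $x\in S/\pp$, no variable lies in $\pp$, so every monomial in $x_1,\dots,x_r$ becomes a unit in $S_\pp$; consequently every entry of $\phi_\pp$ is either $0$ or a unit.

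The next step is the standard ``Smith-type'' reduction over a local ring: whenever $\phi_{ij}$ is a unit, solve the $j$-th relation for the $i$-th generator and substitute into the remaining relations, obtaining an $(n{-}1)\times(m{-}1)$ presentation of the same cokernel with entries $\phi'_{k\ell}=\phi_{k\ell}-\phi_{i\ell}\phi_{ij}^{-1}\phi_{kj}$ for $k\neq i$, $\ell\neq j$. The main obstacle, and the point where the $\N^r$-graded hypothesis is essential, is verifying that this reduction preserves the ``scalar-times-monomial'' form of the entries. Tracking multidegrees,
\[
\phi_{i\ell}\,\phi_{ij}^{-1}\,\phi_{kj} \;=\; c_{i\ell}c_{ij}^{-1}c_{kj}\,x^{(\u_\ell-\v_i)-(\u_j-\v_i)+(\u_j-\v_k)} \;=\; c_{i\ell}c_{ij}^{-1}c_{kj}\,x^{\u_\ell-\v_k},
\]
so $\phi'_{k\ell}=(c_{k\ell}-c_{i\ell}c_{ij}^{-1}c_{kj})\,x^{\u_\ell-\v_k}$ is again a scalar from $\KK$ times a (possibly Laurent) monomial in $S_\pp$, and is therefore either $0$ or a unit. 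Thus the reduction can never stall at a nonzero non-unit entry; it terminates only when the presentation matrix is identically zero, at which point $M_\pp$ is visibly a finite free $S_\pp$-module. Combined with the first paragraph this gives $\rk_S(M)=\rk_{S_\pp}(M_\pp)=\dim_{\KK(x)}(M_\pp/\pp M_\pp)=\rk_{S/\pp}(M\otimes_S S/\pp)$, as desired. As a cross-check one could instead tensor a finite $\N^r$-graded free resolution of $M$ with $S/\pp$: by Lemma~\ref{L:HF add} the alternating sum $\sum_i(-1)^i\rk_{S/\pp}\Tor_i^S(M,S/\pp)$ equals $\rk_S(M)$, and freeness of $M_\pp$ (equivalently, the vanishing of all higher $\Tor_i^S(M,\KK(x))$) collapses this to the $i=0$ term.
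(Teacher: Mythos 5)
Your proof is correct, but takes a genuinely different route from the paper's. The paper proceeds by induction on $r$: it tensors $M$ with the short exact sequence $0 \to S(-1) \xrightarrow{x_1-x_2} S \to S/\langle x_1-x_2 \rangle \to 0$, shows the kernel of $(x_1-x_2)$ on $M$ is annihilated by $\langle x_1, x_2 \rangle$ (a homogeneous $m$ cannot satisfy $x_1 m = x_2 m \ne 0$, since the two sides would lie in different multidegrees), and then reads off the rank from the leading coefficient of the Hilbert polynomial, which is additive on exact sequences. Your proof instead localizes at $\pp$ and establishes the sharper fact that $M_\pp$ is \emph{free} over $S_\pp$, via a Smith-type reduction of an $\N^r$-graded presentation: the grading forces every matrix entry to be a scalar times a monomial, hence zero or a unit once the variables are inverted (they all lie outside $\pp$), and your multidegree bookkeeping shows this property is preserved under pivoting. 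Besides avoiding both induction and the Hilbert polynomial, your argument makes transparent the geometric reason the diagonal works --- $V(\pp)$ misses the coordinate hyperplanes $V(x_i)$, which by Corollary~\ref{MPHASS} are where all torsion of a multigraded module is supported --- and it applies verbatim to any prime $\pp$ containing no variable $x_i$ (with $S/\pp$ a domain), not only the diagonal. The paper's route is more elementary (no localization) and stays inside the Hilbert-polynomial toolkit it has already assembled in Section~\ref{S:hilbert series}.
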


\begin{proof}

We proceed by induction on $r$, starting with the case $r=2$,
so that $\pp = \langle x_1-x_2\rangle$ is principal. Tensoring the
short exact sequence
\begin{equation}\label{HyperplaneSES}
0 \longrightarrow S(-1) \stackrel{x_1-x_2}{\longrightarrow} S
\longrightarrow S/\pp\longrightarrow 0
\end{equation}
with $M$ yields the exact sequence
\begin{equation}\label{4termES}
0 \longrightarrow \ker(x_1-x_2) \longrightarrow M(-1) 
\stackrel{x_1-x_2}{\longrightarrow} M
\longrightarrow M \otimes S/\pp \longrightarrow 0
\end{equation}

Suppose $m \in \ker(x_1-x_2)$, so that $x_1m -x_2m = 0$. If $m$
has degree $(a,b)$ then $x_1m$ has degree $(a+1,b)$ and $x_2m$ has
degree $(a,b+1)$ so we must have $x_1m = 0 =x_2m$, in other words
$\ker(x_1-x_2)$ is annihilated by $\langle
x_1,x_2\rangle$. In particular, it follows that $HP(\ker(x_1-x_2),j)=0$ for $j>>0$.

Denote by {\bf M} the $\N$-graded module obtained from $M$ by setting the degree of every variable to be $1$. By the discussion after Definition \ref{D:HP} we know that  $HP({\bf M},i)=ci+d$ for $i$ large enough, and with  $c=\rk(M)$.

 As the Hilbert polynomial is additive on exact sequences, we
obtain, for $i\gg0$:
\[
\begin{array}{ccc}
HP(M \otimes S/\pp, i) &= &HP({\bf M}, i) - HP({\bf M}, i-1) +HP(\ker(\cdot x_1-x_2),i-1)\\
&= &HP({\bf M}, i) - HP({\bf M}, i-1) \\
&= &ci +d - (c(i-1)+d)\\
&= &c \, .
\end{array}
\]

This establishes the theorem for $r=2$.  

Now suppose the assertion
holds for $r=n-1$ and let $r=n$. This is where we use the fact that
$\pp$ is generated by very special linear forms: an $\N^r$-graded module
$M$, quotiented by $\langle x_i-x_j\rangle $ becomes an $\N^{r-1}$-graded module. This
is not the case for an arbitrary linear form. For our $\N^r$-graded
module $M$, we first proceed as above, tensoring it with the short
exact sequence of Equation~\ref{HyperplaneSES} and obtaining the
four term exact sequence of Equation~\ref{4termES}. While $\ker(x_1-x_2)$ is annihilated by $\langle x_1,x_2\rangle$, this does not
force it to vanish in high degree. However, since it is torsion, it
does not contribute to rank, hence arguing as above we find
\[
\rk_S(M) = \rk _{S/\langle x_1 - x_2\rangle} (M \otimes _S S/\langle x_1-x_2\rangle)
\]
But $M \otimes S/\langle x_1-x_2 \rangle$ is an $\N^{n-1}$ graded module, we may apply
our induction hypothesis with $\pp' = \langle x_2-x_3, \ldots
x_{r-1}-x_r\rangle$ to obtain
\[
\rk_{S/\langle x_1 - x_2 \rangle} (M \otimes_S S/\langle x_1-x_2\rangle) = \rk _{S/\pp} (M \otimes _S S/\langle x_1-x_2\rangle \otimes S/\pp' )=
\rk _{S/\pp} (M \otimes_S S/\pp) \, .
\]
\end{proof}

\begin{exm}
We illustrate how one can distinguish the modules from Example \ref{E:lesnick wright} 
by taking the intersection with the diagonal. Let $\pp=\langle x_1-x_2\rangle$. Then we have

\[
\begin{array}{ccc}
N\otimes S/\pp & \simeq & (S/\pp)^2 \\
M\otimes S/\pp & \simeq & (S/\pp)^2\oplus \KK \, ,
\end{array}
\]
as $M\otimes S/\pp$ is the cokernel of the map $S/\pp\xrightarrow{\makebox[1.8cm]{$(x_2,-x_2,0)^t$}}(S/\pp)^3$.
Because $M$ has a non-trivial syzygy of order $1$, while $N$ is free, $M \not\cong N$. This example also gives
motivation for the introduction of homological tools to study MPH: we have $\Tor_1^S(M,\KK) \ne 0$, whereas $\Tor_1^S(N,\KK)$ vanishes. 
\end{exm}

While the two  methods to pass from MPH to PH that we we have introduced in this section give modules that have the same rank as the original module, they do not give isomorphic modules in general:
\begin{exm}
Let $M=S(-3,0)/x_2$, and further
let 
\begin{align}
l\colon \N&\notag \to \N^2\\
 i&\notag \mapsto (i,i) \, .
 \end{align} Then $M\circ l$ is the zero module, while $M\otimes_S S/\langle x_1-x_2\rangle$ is isomorphic to the $\KK[x]$-module $x^3 \KK$.

\end{exm}

\section{Conclusion 
}\label{S:conclusions}
This paper studies multifiltered simplicial complexes $K$, and
the associated multiparameter persistent homology modules $H_i(K)$ 
introduced by Carlsson and Zomorodian in \cite{CZ}. We propose the 
rank of a module as an invariant for multiparameter persistence modules that captures fully persistent components, namely elements of the module living forever in all directions,
and show that the rank of the module  is the lead coefficient (suitably interpreted) of the Hilbert series of the MPH module. 
We show that one can compute the rank by computing the simplicial homology of the simplicial complex at which a multifiltration stabilizes, and we furthermore give a geometric interpretation of  the rank of the module as the rank of the one-parameter persistence module obtained by  restricting the module to a linear subspace.
We refine our invariant by studying the associated primes of the module, which give a stratification of the support shape  of the module, and capture persistent components, that is, elements of the module living forever along some coordinate direction but not all. We provide a method to compute the size of the submodule generated by the persistent components using local cohomology, and, we provide a shortcut  to compute associated primes. Finally, we discuss several ways to associate a one-parameter persistence module to an $r$-parameter persistence module.
 
 We provide the {\tt Macaulay2} code that we have written for our work at \url{https://github.com/n-otter/MPH}, with the appropriate documentation.
 We are currently implementing  a code that takes as input an arbitrary multifiltration (not necessarily one-critical) using the presentation given in \cite{CSV14}, as well as 
a way to compute the $c_\pp$-rank quickly.

 Our work presents several interesting 
followup questions; a question that we are currently  investigating is the  stability of the stratification of the support shape of a module.

\section*{Acknowledgments} 
Our collaboration began on a visit by the third author to the Mathematical
Institute at Oxford, and he thanks Oxford for providing a wonderful and stimulating environment. The second and fourth author were supported by The Alan Turing Institute through EPSRC grant EP/N510129/1. HAH gratefully acknowledges support from EPSRC Fellowship EP/K041096/1 and a Royal Society University Research Fellowship. HAH and UT thank EPSRC grant EP/R018472-1. HAH and NO were funded by EPSRC Institutional Sponsorship 2016 award EP/P511377/1. HS supported by NSF 1818646. NO was supported by  the Emirates Group  with an Emirates Award. We thank Bei Wang and Michael Lesnick for suggesting an improvement of Definition \ref{D:associated primes invariant}.

\bibliographystyle{amsalpha}

\end{document}